\newcommand{\ad}{\mathrm{ad}}
\newcommand{\supp}{\mathrm{Supp}}
\newcommand{\Hilb}{\mathrm{Hilb}}
\newtheorem{theorem}{Theorem}
\newtheorem{theoremdefinition}{Theorem and Definition}
\newtheorem{proposition}[theorem]{Proposition}
\newtheorem{lemma}[theorem]{Lemma}
\newtheorem{corollary}[theorem]{Corollary}
\theoremstyle{definition}
\newtheorem{definition}[theorem]{Definition}
\newtheorem{remark}[theorem]{Remark}
\begin{document}

\title{{Invariant Hilbert schemes and wonderful varieties}}

\author{St\'ephanie Cupit-Foutou}
\email{scupit@math.uni-koeln.de}
\address{Universit\"at zu K\"oln, Mathematisches Institut, Weyertal 78-90, 50978 K\"oln, Germany}

\begin{abstract}
The invariant Hilbert schemes considered in \cite{Js} and \cite{BC1} were proved to be smooth.
The proof relied on the classification of strict wonderful varieties.
We obtain in the present article a classification-free proof of the smoothness of these invariant Hilbert schemes
by means of deformation theoretical arguments. 
As a consequence we recover in a shorter way (case-by-case-free considerations) the classification of strict wonderful varieties.
This provides an alternative approach to Luna's conjecture.
\end{abstract}

\maketitle

\section{Introduction}
Let $G$ be a connected reductive complex algebraic group.
Subschemes of a given finite dimensional $G$-module, whose coordinate rings are isomorphic as $G$-modules
are parameterized by a quasiprojective scheme, the so-called \textsl{invariant Hilbert scheme}
introduced by Alexeev and Brion in~\cite{AB}.

Let us consider affine multi-cones over some flag $G$-varieties which are $G$-orbit closures. 
Invariant Hilbert schemes which contain such a multi-cone as closed point are proved to be smooth in~\cite{Js} and \cite{BC1}. 
Further, their universal family can be realized as the $G$-orbit map of the normalization of a certain multi-cone over a \textsl{ strict wonderful variety}.

Wonderful $G$-varieties are projective $G$-varieties which enjoy nice properties like being smooth and having a dense orbit for a Borel subgroup of $G$
(e.g. flag varieties, De Concini-Procesi compactifications of symmetric spaces).
Luna's conjecture asserts that wonderful varieties are classified by combinatorial objects called spherical systems.
In~\cite{BC2}, we answered this conjecture positively in case of wonderful varieties whose points have selfnormalising stabilizer: the so-called \textsl{strict wonderful varieties};
like all (positive) answers obtained so far (\cite{L01,BP,B}), 
we followed Luna's Lie theoretical approach introduced in~\cite{L01} which involves case-by-case considerations.

The classification of strict wonderful varieties is one of the main tools which allowed to describe the invariant Hilbert scheme in~\cite{Js} and \cite{BC1}.

In this work, we obtain a classification-free proof of the smoothness of the invariant Hilbert scheme under study
by means of deformation theoretical arguments. 
As a consequence we recover in a shorter way (case-by-case-free considerations) the classification of strict wonderful varieties in terms of spherical systems.
More specifically, we provide a geometrical construction of these wonderful varieties; this yields an alternative approach to Luna's conjecture.

\bigbreak

\paragraph{\textit{Acknowledgements}}
I am indebted and grateful to Michel Brion for his suggestions and helpful discussions.
I thank Paolo Bravi and S\'ebastien Jansou for interesting exchanges.

\bigbreak
\smallbreak\noindent{\textbf{Main notation.}}
The ground field is the field of complex numbers.
Throughout this paper, $G$ is a connected reductive algebraic group whose Lie algebra is denoted by $\mathfrak g$.
Let $B$ be a Borel subgroup of $G$, $U$ its unipotent radical and $T\subset B$ a maximal torus of $G$.
The set of dominant weights $\Lambda^+$ (relatively to $B$ and $T$) parameterizes the irreducible $G$-modules;
for a given $\lambda\in\Lambda^+$, we shall denote by $V(\lambda)$ the corresponding $G$-module.
The $\mu$-weight space ($\mu$ being any character of $T$) of a given $G$-module $V$ is denoted by $V_\mu$; in particular $V(\lambda)_\lambda$ is one-dimensional.

\section{Invariant Hilbert schemes}

\subsection{Definitions and setting}

We recall from~\cite{AB} notions and results concerning invariant Hilbert schemes.

Given a finite dimensional $G$-module $V$ and a scheme $S$ endowed with the trivial action of $G$, 
a closed $G$-subscheme $\mathfrak X$ of  $V\times S$ is called a \emph{family of affine $G$-subschemes of $V$ over $S$}.

The projection of $V\times S$ onto $S$ induces a morphism $\pi:\mathfrak X\rightarrow S$ which is affine, of finite type and $G$-equivariant.
We thus have a $G$-equivariant morphism of $\mathcal O_S-G$-modules
\begin{equation}
\pi_\star \mathcal O_{\mathfrak X}\cong
\bigoplus_{\lambda\in\Gamma}\mathcal R_\lambda\otimes V(\lambda)^\ast
\end{equation}
where $\Gamma$ denotes a subset of $\Lambda^+$ and $\mathcal R_\lambda$ is the $U$-fixed point set 
$(\pi_\star \mathcal O_\mathfrak X)_\lambda^U$.

Each $\mathcal R_\lambda$ is a coherent sheaf of $(\pi_\star\mathcal O_\mathfrak X)^G$-modules.
When each $\mathcal R_\lambda$ is an invertible sheaf of $\mathcal O_S$-modules,
the morphism $\pi$ is flat and the family $\mathfrak X$ is said to be \textsl{of type $\Gamma$}.

\begin{theoremdefinition}(\cite[Theorem~1.7]{AB})~\label{definitionHilb}
The contravariant functor 
that associates to any scheme $S$ the set of families of affine $G$-subschemes of $V$ over $S$ of type $\Gamma$ is
represented by a quasi-projective scheme, the \textsl{invariant Hilbert scheme} $\Hilb_\Gamma^G$. 
\end{theoremdefinition}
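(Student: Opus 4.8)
The plan is to reduce, by means of the $U$-invariants functor, to the representability of a multigraded Hilbert functor, and then to carve out the $G$-equivariant families as a closed subfunctor.

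\emph{Reformulation.} Write $R=\operatorname{Sym}(V^\ast)$. To give a family $\mathfrak X\subset V\times S$ of type $\Gamma$ is the same as to give a $G$-stable quasi-coherent sheaf of ideals $\mathcal I\subset\mathcal O_S\otimes R$ such that $(\mathcal O_S\otimes R)/\mathcal I$ is flat over $S$ with every geometric fibre isomorphic, as a $G$-module, to $\bigoplus_{\lambda\in\Gamma}V(\lambda)^\ast$. Because the functor of $U$-invariants is exact on the semisimple category of rational $G$-modules and recovers the $G$-module structure, this is in turn equivalent to a $T$-stable sheaf of ideals $\mathcal J=\mathcal I^U$ of $\mathcal O_S\otimes R^U$ whose quotient is flat with $\lambda$-weight space locally free of rank $\mathbf 1_\Gamma(\lambda)$ for every $\lambda$, subject to the extra condition $(\star)$ that the $G$-stable subsheaf $\mathcal I$ of $\mathcal O_S\otimes R$ generated by $\mathcal J$ be an ideal, that is $V^\ast\cdot\mathcal I\subseteq\mathcal I$. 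Here $R^U=\operatorname{Sym}(V^\ast)^U$ is a finitely generated algebra graded by a submonoid of $\Lambda^+$, and the quotients above have one-dimensional homogeneous components in the weights belonging to $\Gamma$ and vanishing ones elsewhere.

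\emph{The multigraded Hilbert scheme.} Forgetting condition $(\star)$, what remains is exactly a family for the multigraded Hilbert functor $H^{\mathbf 1_\Gamma}_{R^U}$ of the finitely generated graded algebra $R^U$ (if $R^U$ fails to be polynomial, one first presents it as a graded quotient of a polynomial ring, which changes nothing). By the theorem of Haiman and Sturmfels on multigraded Hilbert schemes this functor is represented by a quasi-projective scheme. The substance of that theorem --- and the step I expect to be the real obstacle in a self-contained treatment --- is a Gotzmann-type boundedness: there is a finite set $D$ of weights such that any ideal with the prescribed Hilbert function is generated by, hence reconstructed from, its homogeneous components in the weights of $D$, and on those weights the relevant component has fixed finite codimension in a finite-dimensional space; thus $H^{\mathbf 1_\Gamma}_{R^U}$ embeds as a locally closed subscheme of a finite product of ordinary Grassmannians, which is projective.

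\emph{The $G$-equivariant locus and conclusion.} It remains to impose $(\star)$. Decomposing $R=\bigoplus_\lambda M_\lambda\otimes V(\lambda)^\ast$ with $M_\lambda=(R^U)_\lambda$ the multiplicity space, one has $\mathcal I=\bigoplus_\lambda\mathcal N_\lambda\otimes V(\lambda)^\ast$ where $\mathcal N_\lambda\subset\mathcal O_S\otimes M_\lambda$ is the $\lambda$-component of $\mathcal J$; projecting the products $V^\ast\cdot\mathcal I$ onto highest-weight vectors turns $(\star)$ into a system of $\mathcal O_S$-linear relations among the $\mathcal N_\lambda$, and by the boundedness of the previous step only finitely many of them are independent. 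Each is the vanishing of a morphism of coherent sheaves on $S$, hence a closed condition, and their common zero locus is a closed subscheme of the quasi-projective scheme just obtained. This closed subscheme represents the functor of families of type $\Gamma$; being closed in a quasi-projective scheme it is quasi-projective, and it is $\Hilb_\Gamma^G$. (Throughout one uses that the functor is an fppf sheaf --- a routine faithfully flat descent check --- in order to trivialise the sheaves $\mathcal N_\lambda$ locally on $S$.)
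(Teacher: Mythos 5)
The paper gives no proof of this statement: it is quoted verbatim from \cite[Theorem~1.7]{AB}, so there is nothing internal to compare against. Your proposal is a correct reconstruction of the argument in that source — passage to $U$-invariants (exactness and faithfulness of $M\mapsto M^U$ for rational modules of a reductive group in characteristic zero, plus finite generation of $\mathrm{Sym}(V^\ast)^U$), representability of the resulting multigraded Hilbert functor by the Haiman--Sturmfels theorem, and recovery of the $G$-equivariant families as the closed subscheme where the $G$-span of the ideal of $U$-invariants is again an ideal — so it follows essentially the same route as the cited proof rather than a genuinely different one.
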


We shall be concerned throughout this article by the case where  
$$
V=V(\lambda_1)\oplus\ldots\oplus V(\lambda_s)
$$
with
$\lambda_1,\ldots,\lambda_s$ linearly independent dominant weights spanning a free and \textsl{saturated} monoid $\Gamma$, \textsl{i.e.} $\Gamma$ is such that
$$
\mathbb Z\Gamma\cap\Lambda^+=\Gamma.
$$

In the remaining, we shall denote
$$
\underline\lambda=(\lambda_1,\ldots,\lambda_s).
$$

Let $X_0$ be the $G$-orbit closure within $V$ of 
$$
v_{\underline\lambda}=v_{\lambda_1}+\ldots+v_{\lambda_s}
$$
where $v_{\lambda_i}$ denotes a highest weight vector of $V(\lambda_i)$ for $i=1,\ldots,s$.

Under the saturation assumption, the variety $X_0$ is spherical under the action of $G$, 
that is $X_0$ contains a dense orbit for a Borel subgroup of $G$.
Further, $X_0$ is normal and the boundary $X_0\setminus G.v_{\underline\lambda}$ is of codimension greater than $2$. 

Let $G_{v_{\underline\lambda}}$ be the stabilizer of $v_{\underline\lambda}$ in $G$ 
and $P_{\underline\lambda}$ be the normalizer of $G_{v_{\underline\lambda}}$ in $G$. 
The variety $X_0$ thus coincides with the affine multi-cone
$$
\mathrm{Spec}\oplus_{\underline\nu\in\Gamma} H^0(G/P_{\underline\lambda},\mathcal L_{\underline\nu})
$$ 
where $\mathcal L_{\underline\nu}$ refers to the $G$-linearized bundle $\otimes_i\mathcal L_{\lambda_i}^{m_i}$ with $\underline\nu=\sum_{i=1}^{s} m_i\lambda_i$ a dominant weight and
$$
\mathcal L_\lambda=G\times_B\mathbb C_{-\lambda}
$$
where $\mathbb C_{-\lambda}$ stands for the one-dimensional $B$-module associated to the character $-\lambda$.

The subvariety $X_0$ of $V$ can thus be regarded as a closed point of $\Hilb_\Gamma^G$.
More generally, any closed point of $\Hilb_\Gamma^G$ having a multiplicity-free coordinate ring,
is a spherical affine $G$-variety; \textsl{see}\cite{Br1,Vi}.
Such an affine $G$-variety is said to be non-degenerate if its projection onto any $V(\lambda_i)$, $i=1,\ldots,s$, is not trivial.

\begin{theorem}(\cite[Corollary~1.17 and Theorem 2.7]{AB})
The non-degenerate $G$-subvarieties of $V$ which can be seen as closed points of $\Hilb_\Gamma^G$ are parameterised by
a connected and open subscheme $\Hilb_{\underline\lambda}^G$ of $\Hilb_\Gamma^G$. 
\end{theorem}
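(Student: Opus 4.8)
The statement splits into two parts: that the non-degenerate locus is an \emph{open} subscheme, and that it is \emph{connected}. For openness I would work over the universal family $\pi\colon\mathfrak U\to\Hilb_\Gamma^G$. The closed immersion $\mathfrak U\hookrightarrow V\times\Hilb_\Gamma^G$ dualises to a $G$-equivariant surjection $\mathrm{Sym}(V^\ast)\otimes\mathcal O\twoheadrightarrow\pi_\star\mathcal O_{\mathfrak U}=\bigoplus_{\lambda\in\Gamma}\mathcal R_\lambda\otimes V(\lambda)^\ast$; restricting it to the linear part $V^\ast=\bigoplus_{i=1}^s V(\lambda_i)^\ast$ and projecting onto the $\lambda_i$-isotypic component produces, for each $i$, an element of $\Hom_G\big(V(\lambda_i)^\ast,\mathcal R_{\lambda_i}\otimes V(\lambda_i)^\ast\big)=H^0(\Hilb_\Gamma^G,\mathcal R_{\lambda_i})$, that is, a section $\sigma_i$ of the line bundle $\mathcal R_{\lambda_i}$. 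The projection of a closed subscheme $X$ onto the factor $V(\lambda_i)$ is exactly the linear map given by these coordinates, so $X$ is non-degenerate if and only if $\sigma_i([X])\neq 0$ for every $i$. Hence the non-degenerate locus is $\Hilb_\Gamma^G\setminus\bigcup_{i=1}^s\{\sigma_i=0\}$, an open subscheme; this is the sought $\Hilb_{\underline\lambda}^G$.

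For connectedness I would exploit the torus $\mathcal T=\mathrm{Aut}^G(V)$. As the $\lambda_i$ are pairwise distinct and occur with multiplicity one in $V$, one has $\mathcal T\cong(\mathbb C^\ast)^s$ acting on $V$, commuting with $G$, by independent scalars on the summands $V(\lambda_i)$; it therefore acts on $\Hilb_\Gamma^G$ and stabilises $\Hilb_{\underline\lambda}^G$, since non-degeneracy is unaffected by such rescalings. Moreover $[X_0]$ is $\mathcal T$-fixed: under the identification of $X_0$ with $\mathrm{Spec}\bigoplus_{\underline\nu\in\Gamma}H^0(G/P_{\underline\lambda},\mathcal L_{\underline\nu})$, the $\mathcal T$-action is the grading by $\Gamma$, which preserves $X_0$. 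The core of the argument is then to show that every non-degenerate $[X]$ degenerates to $[X_0]$ inside $\Hilb_{\underline\lambda}^G$ under a suitable one-parameter subgroup $\gamma\colon\mathbb C^\ast\to\mathcal T$. I would obtain $\gamma$ from the canonical filtration of $\mathbb C[X]=\bigoplus_{\lambda\in\Gamma}\mathcal R_\lambda\otimes V(\lambda)^\ast$: multiplication carries $(\mathcal R_\lambda\otimes V(\lambda)^\ast)\otimes(\mathcal R_\mu\otimes V(\mu)^\ast)$ into $\bigoplus_\nu\mathcal R_\nu\otimes V(\nu)^\ast$, where $\nu$ ranges over elements of $\Gamma$ with $\lambda+\mu-\nu$ a sum of positive roots and the top term $\nu=\lambda+\mu$ is the Cartan-component product. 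Taking $a\in\mathbb Z^s$ for which the functional $\sum_j m_j\lambda_j\mapsto\sum_j m_j a_j$ is a positive multiple of the restriction of $\langle\,\cdot\,,2\rho^\vee\rangle$ to $\mathbb Z\Gamma$, one gets $\langle\lambda+\mu,a\rangle>\langle\nu,a\rangle$ for every lower term, so $F_{\le d}=\bigoplus_{\langle\lambda,a\rangle\le d}\mathcal R_\lambda\otimes V(\lambda)^\ast$ defines a multiplicative, exhaustive, bounded-below filtration of $\mathbb C[X]$. Its Rees algebra gives a flat $\mathcal T$-equivariant family over $\mathbb A^1$, closed in $V\times\mathbb A^1$ and of type $\Gamma$, with general fibre $X$ and special fibre the $G$-variety with weight monoid $\Gamma$ and Cartan-component-only multiplication. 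The latter is unique up to isomorphism — its structure constants form a multiplicative cocycle on the free monoid $\Gamma$, hence a coboundary, so abstractly it is $\mathrm{Spec}\bigoplus_{\underline\nu\in\Gamma}H^0(G/P_{\underline\lambda},\mathcal L_{\underline\nu})=X_0$ — and a non-degenerate $G$-subscheme of $V$ realising it is unique up to $\mathcal T$; as $[X_0]$ is $\mathcal T$-fixed, the special fibre is precisely $[X_0]$. This Rees family thus defines a morphism $\mathbb A^1\to\Hilb_\Gamma^G$ sending $t\neq 0$ to $\gamma(t)\cdot[X]$ and $0$ to $[X_0]$, with image inside the open $\mathcal T$-stable subscheme $\Hilb_{\underline\lambda}^G$.

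Granting this, connectedness is immediate: since $\mathcal T$ is connected it preserves each irreducible component $Z$ of $\Hilb_{\underline\lambda}^G$, so for any $[X]\in Z$ the curve $\{\gamma(t)\cdot[X]:t\in\mathbb C^\ast\}$ lies in $Z$, whence $[X_0]=\lim_{t\to 0}\gamma(t)\cdot[X]\in Z$; thus every component of $\Hilb_{\underline\lambda}^G$ contains the common point $[X_0]$ and $\Hilb_{\underline\lambda}^G$ is connected. The main obstacle is the construction and analysis of this canonical degeneration — above all the identification of its special fibre with $[X_0]$, which is where the identification of $X_0$ with the multi-cone $\mathrm{Spec}\bigoplus_{\underline\nu\in\Gamma}H^0(G/P_{\underline\lambda},\mathcal L_{\underline\nu})$ and the normality of $X_0$ (granted by saturation of $\Gamma$) enter; once that is secured, openness and the gluing of the components are routine.
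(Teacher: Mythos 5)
The paper does not prove this statement: it is imported wholesale from Alexeev--Brion (\cite[Corollary~1.17 and Theorem~2.7]{AB}), so there is no internal proof to compare against. Your argument is, in substance, a correct reconstruction of the proofs in that source: openness of $\Hilb_{\underline\lambda}^G$ via the sections $\sigma_i$ of the invertible sheaves $\mathcal R_{\lambda_i}$ cut out by the linear part of the universal ideal, and connectedness by degenerating every non-degenerate point to the horospherical multicone $X_0$ along a one-parameter subgroup of $\mathrm{Aut}^G(V)$ via the Rees algebra of the Cartan filtration. The only step I would ask you to make explicit is that every closed point of the open locus has non-vanishing Cartan structure constants $f_\lambda f_\mu\in\mathbb C^\ast f_{\lambda+\mu}$ (so that the associated graded really is $\mathbb C[X_0]$ and not a degenerate quotient); this follows from non-degeneracy, multiplicity-freeness and the freeness of $\Gamma$, and is exactly the point where \cite{AB} invokes these hypotheses.
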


\subsection{Tangent space of the invariant Hilbert scheme}

Let $T_\ad$ be the adjoint torus of $G$, that is $T_\ad=T/Z(G)$ where $Z(G)$ is the center of $G$.
\textit{Any} invariant Hilbert scheme is endowed with an action of the adjoint torus (\textit{see}~\cite{AB} for details).

Let us recall how $T_\ad$ acts on the tangent space $T_{X_0}\Hilb_\Gamma^G$ at $X_0$ of the invariant Hilbert scheme $\Hilb_\Gamma^G$ (\textsl{see} Section 2.1 in~\cite{AB}). 
Let $t\in T_\ad$ then
$$
t.v=(\lambda_i-\mu)(t)v \quad \mbox{ when $v\in V(\lambda_i)_\mu$}.
$$

\begin{theorem}(\cite[Proposition 1.5]{AB})
The tangent space $T_{X_0}\Hilb_\Gamma^G$ at $X_0$ of $\Hilb_\Gamma^G$
is isomorphic as a $T_\ad$-module to the $G_{v_{\underline\lambda}}$-invariant subspace $\left(V/\mathfrak g.v_{\underline\lambda}\right)^{G_{v_{\underline\lambda}}}$.  
\end{theorem}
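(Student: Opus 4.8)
The plan is to realize $T_{X_0}\Hilb_\Gamma^G$ as a space of $G$-stable first-order deformations of $X_0$ inside $V$, and then to compute this space on the open orbit. Write $A=\mathbb C[V]$, $R=\mathbb C[X_0]$ and $I=\ker(A\to R)$, and set $H=G_{v_{\underline\lambda}}$. A morphism $\mathrm{Spec}\,\mathbb C[\epsilon]/(\epsilon^2)\to\Hilb_\Gamma^G$ lying over the point $X_0$ is the same datum as a $G$-stable flat family of type $\Gamma$ over $\mathbb C[\epsilon]/(\epsilon^2)$ restricting to $X_0$; unravelling the ideals, such a family is precisely a $G$-equivariant homomorphism $\phi\colon I/I^2\to R$, and conversely each such $\phi$ yields one --- flatness over $\mathbb C[\epsilon]/(\epsilon^2)$ being automatic, and the ``type $\Gamma$'' condition imposing nothing more to first order, since $R$ is multiplicity-free and the isotypic multiplicities of a flat family cannot jump. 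Hence $T_{X_0}\Hilb_\Gamma^G\cong\Hom_R^G(I/I^2,R)$, the space of $G$-invariants of the normal module $N$ of $X_0$ in $V$.

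Next I would pass from $X_0$ to its open orbit $\Omega:=G\cdot v_{\underline\lambda}\cong G/H$. Since $X_0$ is normal and its boundary $X_0\setminus\Omega$ has codimension $\geq 2$, regular functions extend across the boundary, i.e.\ $R\to H^0(\Omega,\mathcal O_\Omega)$ is an isomorphism; a short argument with a presentation of the finitely generated $R$-module $I/I^2$ (evaluate a section of $N$ over $\Omega$ on generators, check it respects the relations) then promotes this to an isomorphism $\Hom_R^G(I/I^2,R)\cong H^0(\Omega,N_{\Omega/V})^G$, where $N_{\Omega/V}$ is the ordinary normal bundle of the smooth locally closed subvariety $\Omega\subset V$. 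Alternatively: $N$, being a sheaf-$\Hom$ into $\mathcal O_{X_0}$ over a normal variety, is reflexive, hence $S_2$, so its sections are already determined on $\Omega$.

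The last step computes $H^0(\Omega,N_{\Omega/V})^G$. As $\Omega$ is a single $G$-orbit through $v_{\underline\lambda}$ with $T_{v_{\underline\lambda}}\Omega=\mathfrak g\cdot v_{\underline\lambda}$, and $V$ is a vector space so that its tangent bundle is trivial, the normal bundle is the homogeneous bundle $N_{\Omega/V}=G\times_H\bigl(V/\mathfrak g\cdot v_{\underline\lambda}\bigr)$ --- the subspace $\mathfrak g\cdot v_{\underline\lambda}$ being $H$-stable because $H$ fixes $v_{\underline\lambda}$. For any $H$-module $W$, a $G$-equivariant global section of $G\times_H W$ over $G/H$ has the form $gH\mapsto[g,w]$, and $G$-equivariance forces $w$ to be a single $H$-fixed vector; therefore $H^0(\Omega,N_{\Omega/V})^G\cong(V/\mathfrak g\cdot v_{\underline\lambda})^{H}$. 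It then remains to check that the resulting composite isomorphism is $T_\ad$-equivariant. On the target, $T_\ad$ acts through the action it induces on $V/\mathfrak g\cdot v_{\underline\lambda}$ from the rule $t.v=(\lambda_i-\mu)(t)v$ for $v\in V(\lambda_i)_\mu$ --- note that this is exactly the normalization of the natural $T$-action on $V$ for which $v_{\underline\lambda}$ is fixed, so it does descend to the fibre $V/\mathfrak g\cdot v_{\underline\lambda}$ --- while on the source $T_\ad$ acts via the action of $T\subset G$ on $V$, which normalizes $G$ in $\mathrm{GL}(V)$ by inner automorphisms and hence acts on $\Hilb_\Gamma^G$ with $X_0$ as a fixed point; matching the two is a routine weight computation.

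I expect the genuine difficulty to lie in the second step: verifying that the $G$-invariant normal module may legitimately be computed on the open orbit, so that removing the boundary neither destroys nor creates $G$-invariant first-order deformations. This rests on both the normality of $X_0$ and the codimension $\geq 2$ bound on its boundary (recalled above as consequences of the saturation hypothesis), which together make $N$ an $S_2$-sheaf. Once one is on $\Omega$, what remains is the elementary theory of homogeneous vector bundles, and the $T_\ad$-equivariance is mere bookkeeping with weights.
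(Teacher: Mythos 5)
The paper gives no proof of this statement: it is quoted verbatim from \cite[Proposition~1.5]{AB} (together with the computation, carried out there and in \cite{Js}, of the tangent space at an orbit closure whose boundary has codimension $\geq 2$). Your argument correctly reconstructs that proof --- identification of the tangent space with $\Hom^G_{\mathcal O(X_0)}(I/I^2,\mathcal O(X_0))$, restriction to the open orbit using normality and the codimension bound so that the reflexive normal module is computed there, and the homogeneous-bundle computation giving $(V/\mathfrak g.v_{\underline\lambda})^{G_{v_{\underline\lambda}}}$ --- so it follows the standard route and is correct.
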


\begin{theorem}(\cite[Theorem 2.2, Theorem 4.1]{BC1} and also \cite{Js} if $s=1$)\label{tgtspaceasmodule}
The tangent space $T_{X_0}\Hilb_\Gamma^G$ is a multiplicity free $T_{\ad}$-module.
Further, its $T_{\ad}$-weights belong to Table~\ref{sphericalroots}.
\end{theorem}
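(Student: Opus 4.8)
The plan is to reduce everything, via \cite[Proposition~1.5]{AB} recalled above, to a direct study of the $T_{\ad}$-module $N:=\left(V/\mathfrak g.v_{\underline\lambda}\right)^{G_{v_{\underline\lambda}}}$; write $H=G_{v_{\underline\lambda}}$ and fix a Levi subgroup $L$ of $P_{\underline\lambda}$ containing $T$. First I would record the $T_{\ad}$-weight decomposition of the ambient module. A $T_{\ad}$-weight of $V$ has the form $\gamma=\lambda_i-\mu$ for some weight $\mu$ of $V(\lambda_i)$, hence lies in the monoid generated by the simple roots; moreover the only elements of $\mathfrak g$ that move $v_{\underline\lambda}$ to $T_{\ad}$-weight $\gamma$ are those in the root space $\mathfrak g_{-\gamma}$, so $(\mathfrak g.v_{\underline\lambda})_\gamma=\mathfrak g_{-\gamma}.v_{\underline\lambda}$ is at most one-dimensional, and it is zero unless $\gamma$ is a root with $\langle\lambda_i,\gamma^\vee\rangle>0$ for some $i$. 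In particular the $\gamma$-weight space of $V/\mathfrak g.v_{\underline\lambda}$ equals $\bigoplus_i V(\lambda_i)_{\lambda_i-\gamma}$ whenever $\gamma$ is not a root.

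Next I would turn the membership of a class in $N_\gamma$ into explicit combinatorics. Since the $v_{\lambda_i}$ are highest weight vectors, $H$ contains the unipotent radical $U$ of $B$, the derived group of $L$ (which is generated by $T$ and the root subgroups $U_{\pm\alpha}$ with $\langle\lambda_j,\alpha^\vee\rangle=0$ for all $j$), and the subtorus $T'=\bigcap_j\ker\lambda_j$ of $T$. For a representative $w$ of weight $\gamma$, invariance modulo $\mathfrak g.v_{\underline\lambda}$ under these subgroups amounts to conditions such as $e_\alpha w\in(\mathfrak g.v_{\underline\lambda})_{\gamma-\alpha}=\mathfrak g_{\alpha-\gamma}.v_{\underline\lambda}$ for the simple roots $\alpha$, together with the analogues for $L$ and $T'$. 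Unwinding these forces $\gamma$ to be supported on the simple roots not belonging to $L$, bounds each coefficient of $\gamma$ in terms of the integers $\langle\lambda_j,\alpha^\vee\rangle$ (so that only finitely many $\gamma$ can occur), and pins the space of representatives modulo $\mathfrak g.v_{\underline\lambda}$ down to dimension at most one — once one checks that simple roots lying in a common connected component of $\supp\gamma$ contribute the same class. This is what yields the multiplicity-freeness.

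Finally I would match the surviving $\gamma$'s with Table~\ref{sphericalroots}. Having fixed $\gamma$, only the simple roots in $\supp\gamma$ together with those of $L$ orthogonal to them are relevant, so one may replace $G$ by a semisimple group of small rank and $V$ by the span of the surviving weight spaces; the configurations compatible with the conditions above form a short explicit list, which one identifies with the classical classification of wonderful varieties of rank one — whose spherical roots are precisely the entries of Table~\ref{sphericalroots}. Each such configuration produces a one-dimensional $N_\gamma$ whose weight lies in the Table. I expect the main obstacle to be this rigidity step: extracting, \emph{without} invoking the classification of wonderful varieties, enough constraints from the ``invariance modulo $\mathfrak g.v_{\underline\lambda}$'' conditions to handle weights supported on several simple roots and to exclude higher multiplicities by a uniform argument rather than a case-by-case inspection of spherical systems.
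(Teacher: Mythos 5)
First, a point of comparison: the paper does not prove this statement at all --- it is quoted from \cite[Theorems~2.2 and 4.1]{BC1} (and from \cite{Js} when $s=1$), and those references establish it by exactly the kind of analysis you outline: reduction via \cite[Proposition~1.5]{AB} to the $T_{\ad}$-module $N=\left(V/\mathfrak g.v_{\underline\lambda}\right)^{G_{v_{\underline\lambda}}}$, followed by an explicit study of the invariance conditions. Your preliminary computations are correct: the identification of the $T_{\ad}$-weight spaces of $V$, the fact that $(\mathfrak g.v_{\underline\lambda})_\gamma=\mathfrak g_{-\gamma}.v_{\underline\lambda}$ is at most one-dimensional and vanishes unless $\gamma$ is a positive root non-orthogonal to some $\lambda_i$, and the list of subgroups $U$, $(L,L)$, $T'$ contained in $G_{v_{\underline\lambda}}$.

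There is, however, a genuine gap, and you name it yourself in your last sentence: the two actual assertions of the theorem --- that each $T_{\ad}$-weight space of $N$ has dimension at most one, and that the surviving weights are exactly those of Table~\ref{sphericalroots} --- are asserted rather than proved. The phrases ``once one checks that simple roots lying in a common connected component of $\supp\gamma$ contribute the same class'' and ``the configurations compatible with the conditions above form a short explicit list'' \emph{are} the content of the theorem; in \cite{Js} and \cite{BC1} this is carried out by a lengthy case-by-case verification over the possible supports (for $s=1$, over the tables of simple groups and dominant weights), not by the uniform rigidity argument you hope for, and no such uniform argument is supplied here. In addition, one intermediate claim is false as stated: the invariance conditions do not force $\supp\gamma\subseteq S\setminus S^p(\Gamma)$. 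For $G$ of type $\mathsf B_n$ and $\lambda=\omega_1$ one has $S^p(\Gamma)=\{\alpha_2,\dots,\alpha_n\}$, yet the weight $\gamma=\alpha_1+\cdots+\alpha_n$ does occur (the quadric cone deforming to $\mathrm{SO}(2n+1)/\mathrm{SO}(2n)$); what is true is only the weaker statement that $\supp\gamma\not\subseteq S^p(\Gamma)$, since $V(\lambda_i)_{\lambda_i-\gamma}=0$ whenever $0\neq\gamma$ is supported in $S^p(\Gamma)$. So your skeleton coincides with that of the cited proofs, but the body of the argument --- precisely the part that makes the theorem true --- is missing.
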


\begin{table}[tbh]
\caption{$T_{\ad}$-weights in $(V/\mathfrak g.v_{\underline\lambda})^{G_{v_{\underline\lambda}}}$.}
\label{sphericalroots}
\begin{center}
\begin{tabular}{l|l}
\hline
Type of support & \multicolumn{1}{|c}{Weight} \\
\hline
$\mathsf A_1\times\mathsf A_1$ & $\alpha+\alpha'$ \\
\hline
$\mathsf A_n$ & $\alpha_1+\ldots+\alpha_n$, $n\geq2$ \\
& $2\alpha$, $n=1$ \\
& $\alpha_1+2\alpha_2+\alpha_3$, $n=3$ \\
\hline
$\mathsf B_n$, $n\geq2$ & $\alpha_1+\ldots+\alpha_n$ \\
& $2\alpha_1+\ldots+2\alpha_n$ \\
& $\alpha_1+2\alpha_2+3\alpha_3$, $n=3$ \\
\hline
$\mathsf C_n$, $n\geq3$ & $\alpha_1+2\alpha_2+\ldots+2\alpha_{n-1}+\alpha_n$ \\
\hline
$\mathsf D_n$, $n\geq4$ & $2\alpha_1+\ldots+2\alpha_{n-2}+\alpha_{n-1}+\alpha_n$ \\
& $\alpha_1+2\alpha_2+2\alpha_3+\alpha_4$, $n=4$ \\
& $\alpha_1+2\alpha_2+\alpha_3+2\alpha_4$, $n=4$ \\
\hline
$\mathsf F_4$ & $\alpha_1+2\alpha_2+3\alpha_3+2\alpha_4$ \\
\hline
$\mathsf G_2$ & $4\alpha_1+2\alpha_2$ \\
& $\alpha_1+\alpha_2$ \\
\hline
\end{tabular}
\end{center}
\end{table}
The weights occurring in Table~\ref{sphericalroots} are so-called \textsl{spherical roots}; they are closely related to wonderful varieties (\textsl{see}~\cite{L97,L01} or~Section~\ref{connection} where we have reported from loc. cit. the results we shall need).

In Proposition~\ref{non-rigidity} and Proposition~\ref{converse}, we will characterize in a purely combinatorial way the monoids $\Gamma$ such that $T_{X_0}\Hilb_\Gamma^G$ is not trivial.

\subsection{Obstruction space and smoothness}
Let us recall first from~\cite{Sc} (\textsl{see} also~\cite{Se}) the definition and some properties of the second cotangent module $T^2_{X_0}$ of $X_0$.

Let $I\subset\mathrm{Sym}(V^*)$ be the ideal of the affine multi-cone $X_0\subset V$
and let $\mathcal O(X_0)=\mathrm{Sym}(V^*)/I$.
Take a presentation of $I$ as $\mathcal O(X_0)$-modules
$$
0\rightarrow R\rightarrow F\rightarrow I\rightarrow 0
$$
where $F$ is a finitely generated free $\mathcal O(X_0)$-module.

This induces an exact sequence of $\mathcal O(X_0)$-modules

\begin{equation}\label{presentation}
R/K\rightarrow F\otimes \mathcal O(X_0)\rightarrow I/I^2\rightarrow 0
\end{equation}
where $K$ is the module of Koszul relations.

Let us apply $\mathrm{Hom}(-,\mathcal O(X_0)):=\mathrm{Hom}_{\mathcal O(X_0)}(-,\mathcal O(X_0) )$ to the exact sequence~(\ref{presentation})
then $T^2_{X_0}$ is defined by the exact sequence 
$$
\mathrm{Hom}(I/I^2,\mathcal O_{X_0})\rightarrow \mathrm{Hom}(F\otimes \mathcal O(X_0),\mathcal O_{X_0})\rightarrow \mathrm{Hom}( R/K,\mathcal O_{X_0})\rightarrow T^2_{X_0}\rightarrow 0.
$$

The $\mathcal O(X_0)$-module $T^2_{X_0}$ is independent of the presentation of the ideal $I$ of $X_0$.
Since $X_0$ is normal and $X_0\setminus G.v_{\underline\lambda}$ is of codimension greater or equal to $2$,
we have:
\begin{equation}\label{1srcharacterisationof T^2}
0\rightarrow T^2_{X_0}\rightarrow
H^1(G.v_{\underline\lambda},\mathcal N_{\underline\lambda})\rightarrow 
 H^1(G.v_{\underline\lambda},F\otimes\mathcal O_{X_0}),
\end{equation}
where $\mathcal N_{\underline\lambda}$ stands for the normal sheaf of $G.v_{\underline\lambda}$ in $V$.

By Schlessinger´s Comparison Theorem (\textsl{see}~\cite{Sc} and also Section 2.3 and Proposition 3.1.12 in~\cite{Se},
we have

\begin{proposition}\label{Schlessinger}
If the invariant set $(T^2_{X_0})^G$ is trivial then the invariant Hilbert scheme $\Hilb^G_\Gamma$ is smooth at $X_0$.
\end{proposition}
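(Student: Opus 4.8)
The plan is to read the statement as a special case of the standard fact that a deformation functor whose obstruction space vanishes is formally smooth; the only real work is to identify the relevant obstruction space and to keep track of the $G$-action throughout.

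First I would recall, following \cite{AB}, that the restriction of the invariant Hilbert functor to Artinian $\mathbb C$-algebras equipped with a marked point over $X_0$ is pro-represented by $\widehat{\mathcal O}_{\Hilb^G_\Gamma,X_0}$; concretely this is the functor of $G$-equivariant deformations of the closed subscheme $X_0\subset V$ remaining of type $\Gamma$. Its tangent space is $(T^1_{X_0})^G$ with $T^1_{X_0}=\Hom_{\mathcal O(X_0)}(I/I^2,\mathcal O(X_0))$, in agreement with the description $\big(V/\mathfrak g.v_{\underline\lambda}\big)^{G_{v_{\underline\lambda}}}$ recalled above. Since $G$ is linearly reductive, I would choose a $G$-equivariant presentation $0\to R\to F\to I\to0$ of $I$ (take $F$ free on a $G$-stable finite-dimensional space of generators of $I$); then the whole construction of (\ref{presentation}) and of $T^2_{X_0}$ takes place in the category of rational $G$-equivariant $\mathcal O(X_0)$-modules, on which the invariants functor $M\mapsto M^G$ is exact.

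Next I would invoke Schlessinger's comparison theorem (\cite{Sc}; see also \cite[\S2.3 and Prop.~3.1.12]{Se}): the module $T^2_{X_0}$ built from the presentation is the intrinsic obstruction module for deformations of $X_0$, and because the ambient space $V$ is smooth, embedded and abstract deformations of $X_0$ share the same $T^2$, so every obstruction to lifting a deformation of $X_0\subset V$ along a small extension of Artinian bases lies in $T^2_{X_0}$. Running this argument $G$-equivariantly — the comparison theorem is functorial, hence compatible with the $G$-action, and the obstruction class of a $G$-equivariant deformation is $G$-invariant — shows that the deformation functor pro-represented by $\Hilb^G_\Gamma$ at $X_0$ carries an obstruction theory with obstruction space $(T^2_{X_0})^G$.

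Finally, if $(T^2_{X_0})^G=0$ there are no obstructions, so this functor is formally smooth; equivalently $\widehat{\mathcal O}_{\Hilb^G_\Gamma,X_0}$ is a formal power series ring over $\mathbb C$ on $\dim T_{X_0}\Hilb^G_\Gamma$ variables, hence a regular local ring, and therefore $\Hilb^G_\Gamma$ is smooth at $X_0$. I expect the only genuine point to verify to be the one in the previous paragraph: that the presentation-defined $T^2_{X_0}$ really governs the obstructions of the \emph{invariant} Hilbert functor, and that passing to $G$-invariants neither enlarges nor shrinks this obstruction space — which is precisely where Schlessinger's comparison theorem, the smoothness of $V$, and the exactness of $(-)^G$ (linear reductivity of $G$) enter.
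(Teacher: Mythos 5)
Your argument is correct and is essentially the paper's own: the paper proves this proposition simply by citing Schlessinger's comparison theorem together with \cite[\S 2.3 and Prop.~3.1.12]{Se}, and what you write out — that $T^2_{X_0}$ is an obstruction space for the (embedded, hence by smoothness of $V$ also abstract) deformations of $X_0$, that the construction can be carried out $G$-equivariantly, and that linear reductivity of $G$ makes $(-)^G$ exact so that $(T^2_{X_0})^G$ is an obstruction space for the invariant Hilbert functor — is exactly the content being invoked. No discrepancy to report.
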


\begin{theorem}\label{smoothness}
The invariant Hilbert scheme $\Hilb_{\underline\lambda}^G$ is smooth at $X_0$.
\end{theorem}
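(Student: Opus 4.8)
The plan is to invoke Proposition~\ref{Schlessinger} and reduce the smoothness of $\Hilb_{\underline\lambda}^G$ at $X_0$ to the vanishing of the $G$-invariant part $(T^2_{X_0})^G$. Since smoothness at the point $X_0$ of the connected open subscheme $\Hilb_{\underline\lambda}^G$ follows from smoothness at $X_0$ of $\Hilb_\Gamma^G$, and the latter is governed by the obstruction space $(T^2_{X_0})^G$, the whole theorem becomes the single assertion $(T^2_{X_0})^G=0$.

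First I would exploit the sequence~(\ref{1srcharacterisationof T^2}): taking $G$-invariants (which is exact, $G$ being reductive) embeds $(T^2_{X_0})^G$ into $H^1(G.v_{\underline\lambda},\mathcal N_{\underline\lambda})^G$. The homogeneous space $G.v_{\underline\lambda}\cong G/G_{v_{\underline\lambda}}$ is affine--by--affine over $G/P_{\underline\lambda}$ (the multi-cone structure), so I would compute this cohomology via the Leray spectral sequence for the affine morphism $G/G_{v_{\underline\lambda}}\to G/P_{\underline\lambda}$, pushing the normal sheaf $\mathcal N_{\underline\lambda}$ down to a sheaf on the flag variety $G/P_{\underline\lambda}$ and then using Bott-type vanishing together with the multiplicity-freeness from Theorem~\ref{tgtspaceasmodule}. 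The key structural input is that the normal sheaf sits in the conormal/normal exact sequence $0\to \mathcal T_{G.v_{\underline\lambda}}\to \mathcal T_V|_{G.v_{\underline\lambda}}\to\mathcal N_{\underline\lambda}\to 0$, and $G$-invariant sections and $H^1$'s of the two outer terms are accessible: $\mathcal T_V|_{G.v_{\underline\lambda}}$ is a sum of sheaves induced from $B$-characters, and the tangent sheaf of the orbit is $\mathfrak g\otimes\mathcal O/(\text{isotropy})$, so its invariant cohomology is controlled by $\Lie G_{v_{\underline\lambda}}$ and its derived functors.

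Concretely, I expect $(T^2_{X_0})^G$ to be identified with a subquotient of $H^1(G/G_{v_{\underline\lambda}},(\mathfrak g/\mathfrak g_{v_{\underline\lambda}})\otimes\mathcal O)^{G}$ -- equivalently with $H^1$ of the Lie algebra $\mathfrak g_{v_{\underline\lambda}}$ acting on $\mathfrak g/\mathfrak g_{v_{\underline\lambda}}$, cut down by the constraint that only $T_{\ad}$-weights from Table~\ref{sphericalroots} can occur (Theorem~\ref{tgtspaceasmodule} applies equally to the obstruction space, being a statement about which $B$-semiinvariant functions appear). I would then argue weight-by-weight: for each spherical root $\sigma$ in the table, the contribution to $(T^2_{X_0})^G$ would have $T_{\ad}$-weight in the span of $\{\sigma\}$, and a direct local computation on the minimal-rank wonderful subvariety attached to $\sigma$ (these are small-dimensional cases: rank-one wonderful varieties, whose coordinate rings and defining ideals are explicitly known) shows the relevant $T^2$ invariant piece vanishes. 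The point is that rank-one spherical varieties are hypersurfaces or complete intersections in suitable representations, hence unobstructed, and $X_0$ is built from these by the standard localization/restriction techniques for spherical varieties.

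The main obstacle will be the passage from the global statement to the rank-one reduction -- making precise that $(T^2_{X_0})^G$ decomposes along spherical roots and that each summand is computed by a ``wonderful localization'' at the corresponding colored subdiagram. This requires carefully setting up the comparison between the normal sheaf cohomology on $G.v_{\underline\lambda}$ and the analogous object for the rank-one model, and checking that the codimension-$\ge 2$ boundary condition is inherited so that the identification~(\ref{1srcharacterisationof T^2}) remains valid after restriction. Once that localization is in place, the vanishing in each rank-one case is either classical or a short explicit calculation, and assembling them yields $(T^2_{X_0})^G=0$, whence Theorem~\ref{smoothness} by Proposition~\ref{Schlessinger}.
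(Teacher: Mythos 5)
Your reduction via Proposition~\ref{Schlessinger} to the vanishing of $(T^2_{X_0})^G$ is the right first step, and identifying $H^1(G.v_{\underline\lambda},\mathcal N_{\underline\lambda})^G$ with the Lie algebra cohomology $H^1(\mathfrak g_{v_{\underline\lambda}},V/\mathfrak g.v_{\underline\lambda})^{G_{v_{\underline\lambda}}/G_{v_{\underline\lambda}}^\circ}$ is exactly what Proposition~\ref{characterisation} does. But the core of your plan --- showing that the group into which $(T^2_{X_0})^G$ embeds vanishes, by Bott-type vanishing and a reduction to rank-one models that are ``complete intersections, hence unobstructed'' --- fails at the start: $H^1(\mathfrak g_{v_{\underline\lambda}},V/\mathfrak g.v_{\underline\lambda})$ is \emph{not} zero in general. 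Proposition~\ref{characterisationofH^1} exhibits explicit nonzero cocycles $\varphi_{\alpha,\gamma}$ indexed by $\alpha\in S\setminus S(\Gamma)$ and $\gamma\in\Sigma(\Gamma)$, so whenever the tangent space is nontrivial this $H^1$ is nontrivial too. For the same reason your assertion that Theorem~\ref{tgtspaceasmodule} ``applies equally to the obstruction space'' is unsupported: that theorem constrains the $T_\ad$-weights of $(V/\mathfrak g.v_{\underline\lambda})^{G_{v_{\underline\lambda}}}$, a degree-zero object, and gives no control on first cohomology.

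The missing idea is that $(T^2_{X_0})^G$ is the \emph{kernel} of the second arrow in the exact sequence~\ref{1srcharacterisationof T^2}, namely of the map
$$
f: H^1(\mathfrak g_{v_{\underline\lambda}},V/\mathfrak g.v_{\underline\lambda})\longrightarrow \oplus_{1\leq i,j\leq s}H^1(\mathfrak g_{v_{\underline\lambda}},K_{i,j}),\qquad v\mapsto \textstyle\sum_i v\cdot v_{\lambda_i},
$$
coming from the presentation of $I/I^2$ by the kernels $K_{i,j}$ of the Cartan multiplications. The paper's actual proof (Proposition~\ref{triviality}) establishes that $f$ is injective: for each cocycle $\varphi_{\alpha,\gamma}$ one checks, weight by weight using Table~\ref{sphericalroots} and the combinatorial constraints of Proposition~\ref{non-rigidity}, that some product $\varphi(X_\alpha)\cdot v_{\lambda_i}$ is not a coboundary in $H^1(\mathfrak g_{v_{\underline\lambda}},S^2V/V(2\underline\lambda))$ --- concretely, that either $X_{-\alpha}\bigl(v_{s_\alpha*\gamma}\cdot v_{\lambda_i}\bigr)=0$ or $X_\alpha^{(\lambda_i,\alpha^\vee)}v_{s_\alpha*\gamma}\neq 0$. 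Without this injectivity step (or a genuine substitute for it) your argument cannot close, because the target of your embedding does not vanish; your proposed ``localization along spherical roots'' of $T^2$ is also never made precise and does not correspond to any decomposition established in the paper.
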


\begin{corollary}\label{affinity}
The invariant Hilbert scheme $\Hilb_{\underline\lambda}^G$ is an affine space.
\end{corollary}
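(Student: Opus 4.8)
The plan is to combine the smoothness at $X_0$ given by Theorem~\ref{smoothness} with the action of the adjoint torus $T_\ad$ on $H:=\Hilb_{\underline\lambda}^G$. Recall that $X_0$, being the affine multi-cone over $G/P_{\underline\lambda}$, is a $T_\ad$-fixed point, and that $H$ is $T_\ad$-stable because non-degeneracy of a $G$-subvariety of $V$ is preserved under $T_\ad$. By Theorem~\ref{tgtspaceasmodule} the $T_\ad$-weights on $T_{X_0}H=T_{X_0}\Hilb_\Gamma^G$ are spherical roots, hence non-zero; being non-negative combinations of simple roots they generate a pointed (strictly convex) cone of characters of $T_\ad$. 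First I would pick a one-parameter subgroup $\gamma\colon\mathbb C^\times\to T_\ad$ with $\langle\gamma,\sigma\rangle>0$ for every weight $\sigma$ of $T_{X_0}H$; such a $\gamma$ exists because the cone dual to the one generated by these weights has non-empty interior and hence contains a cocharacter of $T_\ad$. For this choice $\gamma(t)$ acts on $T_{X_0}H$ with strictly positive exponents, so $\lim_{t\to0}\gamma(t)$ contracts $T_{X_0}H$ linearly onto the origin.

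Second, I would check that $\gamma$ contracts the entire scheme onto $X_0$, that is, $\lim_{t\to0}\gamma(t)\cdot x=X_0$ for every $x\in H$. This is the horospherical degeneration underlying the $T_\ad$-action analysis of~\cite{AB}: since $\gamma$ pairs positively with all simple roots, the $\gamma$-adapted filtration of the (multiplicity-free) coordinate ring of any non-degenerate $G$-subvariety of $V$ with weight monoid $\Gamma$ has associated graded isomorphic, as a $G$-algebra, to $\mathcal O(X_0)$ --- the horospherical algebra attached to $\Gamma$ and $P_{\underline\lambda}$ being the unique one with these data. Hence the flat limit of $\gamma(t)\cdot x$ exists inside $H$ and equals $X_0$.

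It then remains to promote these two facts --- smoothness at the attractive fixed point $X_0$ and contraction of $H$ onto $X_0$ --- to the assertion that $H$ is the vector space $T_{X_0}H$. Invoking Sumihiro's theorem to obtain a $T_\ad$-stable affine open neighbourhood of $X_0$, and then Luna's slice theorem at the smooth fixed point $X_0$, I would produce a $T_\ad$-equivariant open immersion $f\colon U\hookrightarrow T_{X_0}H$ defined on a $T_\ad$-stable open $U\ni X_0$ with $f(X_0)=0$. The contraction property gives $H=\bigcup_{t}\gamma(t)^{-1}(U)$, so the rule $f(\gamma(t)^{-1}x):=\gamma(t)^{-1}f(x)$ extends $f$, by equivariance, to a $T_\ad$-equivariant morphism $f\colon H\to T_{X_0}H$ which is a local isomorphism everywhere. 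Contracting both source and target by $\gamma$ one checks that $f$ is injective, hence an open immersion; its image is open, stable under $\gamma$ and contains $0$, hence equals $T_{X_0}H$ since every point of the latter flows to $0$. Therefore $H\cong T_{X_0}H$, a finite-dimensional vector space, which proves Corollary~\ref{affinity}. The main obstacle is precisely this last step: upgrading the infinitesimal smoothness of Theorem~\ref{smoothness} at the single point $X_0$ to a global identification of $H$ with its tangent space there --- a Bia\l ynicki--Birula type phenomenon for the non-complete variety $H$ --- where one must verify that the extension of $f$ is well defined and stays \'etale, and that all the $\gamma$-limits genuinely remain inside $H$.
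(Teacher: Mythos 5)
Your route is genuinely different from the paper's. The paper argues in three lines: smoothness at $X_0$ spreads out (implicitly via the $T_\ad$-action) and connectedness gives irreducibility; then it invokes Corollary~3.4 of \cite{AB}, which says $T_\ad$ acts on the invariant Hilbert scheme with \emph{finitely many orbits}, so $\Hilb_{\underline\lambda}^G$ is a toric variety for $T_\ad$; finally, a smooth affine toric variety with a single fixed point is an affine space. You instead run a Bia{\l}ynicki--Birula contraction argument: choose a cocharacter $\gamma$ pairing positively with all spherical roots, use the horospherical degeneration of \cite{AB} to contract every point of $H=\Hilb_{\underline\lambda}^G$ onto $X_0$, and try to upgrade smoothness at the attractive fixed point to a global isomorphism $H\cong T_{X_0}H$. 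Both arguments lean on nontrivial input from \cite{AB} (finiteness of orbits for the paper, existence of the flat limit at $X_0$ for you --- the latter is indeed what underlies the connectedness statement the paper already quotes), and your first two steps are sound: the spherical roots span a pointed cone, so $\gamma$ exists, and the limit $X_0$ is non-degenerate, so it stays in $H$.

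The gap is where you say it is, and it is a real one as written. Luna's slice theorem at the fixed point $X_0$ produces a $T_\ad$-equivariant \emph{\'etale} morphism from a saturated neighbourhood to $T_{X_0}H$, not an open immersion; shrinking to force injectivity can destroy equivariance, and the subsequent extension-by-equivariance plus the injectivity claim ``contracting both source and target'' are not justified (an everywhere-local isomorphism between contracted spaces need not be injective without further argument). The clean way to close this is to bypass the slice theorem entirely: the contraction makes $\mathcal O(H)$ a non-negatively graded algebra with degree-zero part $\mathbb C$ and irrelevant ideal the maximal ideal $\mathfrak m$ of $X_0$; by the graded Nakayama lemma, homogeneous lifts of a basis of $\mathfrak m/\mathfrak m^2=T_{X_0}H^*$ generate $\mathcal O(H)$, giving a $T_\ad$-equivariant closed immersion $H\hookrightarrow T_{X_0}H$; since the contraction forces $H$ to be integral (any extra component or embedded locus would be $T_\ad$-stable and closed, hence would contain $X_0$, contradicting smoothness there) of dimension $\dim T_{X_0}H$, this closed immersion is an isomorphism. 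With that replacement your argument is complete and arguably more self-contained than the paper's appeal to toric geometry; note also that you need $H$ affine (or work with the graded ring of global functions and check separately that the resulting map is an isomorphism of schemes), which is a point neither your sketch nor the paper makes fully explicit.
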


\begin{proof}
Being smooth and connected, $\Hilb_{\underline\lambda}^G$ is irreducible.
By Corollary 3.4 in \cite{AB}, the invariant Hilbert scheme is acted on by the adjoint torus $T_{ad}$ of $G$ with finitely many $T_{\ad}$-orbits.
Hence it is a toric variety for the adjoint torus of $G$.
Further, a smooth affine toric variety with a single fixed point being an affine space, $\Hilb_{\underline\lambda}^G$ is in turn an affine space. 
\end{proof}

The proof of the above theorem is conducted in details in Section~\ref{Proofs}.
By Proposition~\ref{Schlessinger}, 
it amounts to proving that $(T^2_{X_0})^G$ is trivial;
this will be achieved by means of the two following propositions.

For any $1\leq i\neq j\leq s$, let
$$
K_{i,j}=\ker\{V(\lambda_i)\otimes V(\lambda_j)\longrightarrow V(\lambda_i+\lambda_j)\}
$$
and 
$$
K_{i,i}=\ker\{\mathop{S^2}V(\lambda_i)\longrightarrow V(2\lambda_i)\}.
$$
This yields the presentation
$$
0\rightarrow R\rightarrow \oplus_{1\leq i,j\leq s} K_{i,j}\otimes\mathcal O(X_0)\rightarrow I/I^2\rightarrow 0.
$$

\begin{proposition}\label{characterisation}
Let $\mathfrak g_{v_{\underline\lambda}}$ be the isotropy Lie algebra of $v_{\underline\lambda}$
and $G_{v_{\underline\lambda}}^\circ$ be the identity component of $G_{v_{\underline\lambda}}$.
The invariant set $(T^2_{X_0})^G$ is given by the kernel of the map
$$
H^1(\mathfrak g_{v_{\underline\lambda}},V/\mathfrak g.v_{\underline\lambda})^{G_{v_{\underline\lambda}}/G_{v_{\underline\lambda}}^\circ}\longrightarrow 
\oplus_{1\leq i,j\leq s} H^1(\mathfrak g_{v_{\underline\lambda}},K_{i,j})^{G_{v_{\underline\lambda}}/G_{v_{\underline\lambda}}^\circ}
$$
induced by the map of $\mathfrak g_{v_{\underline\lambda}}$-modules
$$
v\mapsto \sum_i v\cdot v_{\lambda_i}\quad\mbox{ where $v \in V$ and } v\cdot v_{\lambda_i}\in V\cdot V(\lambda_i).
$$
\smallbreak\noindent
\end{proposition}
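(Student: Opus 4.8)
The plan is to take $G$-invariants in the exact sequence~(\ref{1srcharacterisationof T^2}) defining $T^2_{X_0}$ and then to re-express every term through Lie algebra cohomology of the isotropy algebra $\mathfrak g_{v_{\underline\lambda}}$. As $G$ is reductive, the functor of $G$-invariants is exact, so applying it to~(\ref{1srcharacterisationof T^2}) identifies $(T^2_{X_0})^G$ with the kernel of
\begin{equation*}
H^1(G.v_{\underline\lambda},\mathcal N_{\underline\lambda})^G\longrightarrow H^1(G.v_{\underline\lambda},F\otimes\mathcal O_{X_0})^G .
\end{equation*}
It thus remains to compute these two $G$-invariant cohomology groups together with the arrow between them.

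Put $H=G_{v_{\underline\lambda}}$ and identify the orbit $G.v_{\underline\lambda}$ with $G/H$. As the normal bundle of the (smooth) orbit in $V$, the restriction of $\mathcal N_{\underline\lambda}$ to $G.v_{\underline\lambda}$ is the homogeneous bundle $G\times_H(V/\mathfrak g.v_{\underline\lambda})$, its fibre at the base point being $V/\mathfrak g.v_{\underline\lambda}$. Moreover, the presentation $0\rightarrow R\rightarrow\oplus_{i,j}K_{i,j}\otimes\mathcal O(X_0)\rightarrow I/I^2\rightarrow0$ written above rests on the fact that the ideal $I$ of $X_0$ in $\mathrm{Sym}(V^*)$ is generated by its quadratic component $\oplus_{i,j}K_{i,j}$; hence the restriction of $F\otimes\mathcal O_{X_0}=\oplus_{i,j}K_{i,j}\otimes\mathcal O(X_0)$ to the orbit is the homogeneous bundle $G\times_H\bigl(\oplus_{i,j}K_{i,j}\bigr)$.

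Next, for any finite dimensional $H$-module $M$ one has a natural isomorphism
\begin{equation*}
H^1\bigl(G/H,\,G\times_H M\bigr)^G\;\cong\;H^1\bigl(\mathfrak g_{v_{\underline\lambda}},M\bigr)^{G_{v_{\underline\lambda}}/G_{v_{\underline\lambda}}^\circ}.
\end{equation*}
To obtain it, pull back along the finite \'etale cover $G/G_{v_{\underline\lambda}}^\circ\rightarrow G/G_{v_{\underline\lambda}}$, with Galois group $G_{v_{\underline\lambda}}/G_{v_{\underline\lambda}}^\circ$; this reduces the statement to a connected isotropy group, at the cost of the displayed finite group of invariants. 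For a connected isotropy group, a standard argument --- resolve $M$ by homogeneous bundles with vanishing higher cohomology and pass to $G$-invariant global sections --- identifies the $G$-invariant part of the cohomology of a homogeneous bundle with the Lie algebra cohomology $H^\bullet(\mathfrak g_{v_{\underline\lambda}},M)$. Applied to $M=V/\mathfrak g.v_{\underline\lambda}$ and to $M=\oplus_{i,j}K_{i,j}$, this rewrites the two groups above as $H^1(\mathfrak g_{v_{\underline\lambda}},V/\mathfrak g.v_{\underline\lambda})^{G_{v_{\underline\lambda}}/G_{v_{\underline\lambda}}^\circ}$ and $\oplus_{i,j}H^1(\mathfrak g_{v_{\underline\lambda}},K_{i,j})^{G_{v_{\underline\lambda}}/G_{v_{\underline\lambda}}^\circ}$. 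Finally, the arrow between them is induced by the morphism of homogeneous bundles coming from the surjection $F\otimes\mathcal O_{X_0}\rightarrow I/I^2$, i.e.\ at the base point by the $H$-module map $V/\mathfrak g.v_{\underline\lambda}\rightarrow\oplus_{i,j}K_{i,j}$ obtained by differentiating the quadratic relations at $v_{\underline\lambda}$; a short computation --- the Leibniz rule being exactly what forces $\mathfrak g.v_{\underline\lambda}$ into the kernel --- identifies this map with $v\mapsto\sum_i v\cdot v_{\lambda_i}$, where $v\cdot v_{\lambda_i}\in V\cdot V(\lambda_i)$ as in the statement. Taking $H^1$ now yields the proposition.

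The main difficulty is precisely this passage to Lie algebra cohomology, together with the bookkeeping it entails: on the one hand identifying the $G$-invariant part of the sheaf cohomology of homogeneous bundles over the quasi-affine orbit $G.v_{\underline\lambda}$ with the cohomology of $\mathfrak g_{v_{\underline\lambda}}$, and on the other hand tracking the abstract comparison map produced by Schlessinger's construction through all of these identifications so as to recognise it concretely as the multiplication map $v\mapsto\sum_i v\cdot v_{\lambda_i}$.
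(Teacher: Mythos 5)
Your proof is correct and follows essentially the same route as the paper's: take $G$-invariants of the exact sequence~(\ref{1srcharacterisationof T^2}), identify the $G$-invariant cohomology of the homogeneous bundles on the orbit with the cohomology of the isotropy group, pass to Lie algebra cohomology with component-group invariants, and recognize the induced map as multiplication by the $v_{\lambda_i}$. The only difference is presentational: where you sketch the comparison $H^1(G/H,G\times_H M)^G\cong H^1(\mathfrak g_{v_{\underline\lambda}},M)^{G_{v_{\underline\lambda}}/G_{v_{\underline\lambda}}^\circ}$ via an \'etale cover and a resolution argument, the paper factors through rational cohomology of $G_{v_{\underline\lambda}}$ and cites Hochschild for the last step.
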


\begin{proof}
The sheaf $\mathcal N_{\underline\lambda}$
being the $G$-linearized sheaf on $G/G_{v_{\underline\lambda}}$ associated to the $G_{v_{\underline\lambda}}$-module $V/\mathfrak g.v_{\underline\lambda}$,
we have: 
$$
H^1(G.v_{\underline\lambda},\mathcal N_{\underline\lambda})^G=H^1(G_{v_{\underline\lambda}},V/\mathfrak g.v_{\underline\lambda}).
$$
We have similarly
$$
H^1(G.v_{\underline\lambda},\mathcal O_{X_0}^{\oplus n})^G=
\oplus_{1\leq i,j\leq s} H^1(G_{v_{\underline\lambda}},K_{i,j}).
$$

From~\cite{Ho}, we know that
$$
\mathop{H^1}(G_{v_{\underline\lambda}}, V/\mathfrak g.v_{\underline\lambda})\simeq 
\mathop{H^1}(\mathfrak g_{v_{\underline\lambda}}, V/\mathfrak g.v_{\underline\lambda})^{G_{v_{\underline\lambda}}/G_{v_{\underline\lambda}}^\circ}.
$$
The proposition follows from the exact sequence~\ref{1srcharacterisationof T^2}.
\end{proof}

Let $S(\Gamma)$ be the set of simple roots of $G$ orthogonal to every element of $\Gamma$
and let $\Sigma(\Gamma)$ be the set of $T_\ad$-weights of the tangent space $T_{X_0}\Hilb^G_\Gamma$.
Take $\gamma\in\Sigma(\Gamma)$, we shall denote by $v_\gamma\in  \oplus_i V(\lambda_i)_{\lambda_i-\gamma}$ the corresponding weight vector.

\begin{proposition}\label{characterisationofH^1}
The $T_\ad$-weight vectors of $\mathop{H^1}(\mathfrak g_{v_{\underline\lambda}},V/\mathfrak g.v_{\underline\lambda})$ 
can be represented by the following  cocycles indexed by the simple roots $\alpha$ in $S\setminus S(\Gamma)$ and by the $T_\ad$-weights $\gamma$ in $\Sigma(\Gamma)$
\begin{eqnarray*}
\varphi_{\alpha,\gamma}:
 X_\alpha &\longmapsto& X_{-\alpha}^r v_{\gamma}  \\
 X_\delta &\longmapsto& 0  \quad\quad\mbox{if $\delta\neq \alpha$}  
\end{eqnarray*}
Here $r=-(\gamma,\alpha^\vee)$ if $\alpha\not\in\supp\gamma$ and $r=0$ otherwise.	
\end{proposition}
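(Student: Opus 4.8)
The plan is to describe $\mathop{H^1}(\mathfrak g_{v_{\underline\lambda}},V/\mathfrak g.v_{\underline\lambda})$ explicitly as a $T_\ad$-module and then exhibit for each of its weight vectors a Lie algebra cocycle of the asserted form. First I would unwind the definition of $\mathfrak g_{v_{\underline\lambda}}$: since $v_{\underline\lambda}=\sum_i v_{\lambda_i}$ is a sum of highest weight vectors, $\mathfrak g_{v_{\underline\lambda}}$ contains the nilradical $\mathfrak n_{P_{\underline\lambda}}^-$-free part of $\mathfrak b$ together with the Levi part killed by all $\lambda_i$, and in particular it contains $\mathfrak n_{\underline\lambda}:=\bigoplus_{\alpha\in\Delta^+\setminus\supp\underline\lambda}\mathfrak g_\alpha$ plus the torus directions on which every $\lambda_i$ vanishes; the complement of $\mathfrak g_{v_{\underline\lambda}}$ inside $\mathfrak g$ is spanned (modulo the centralizer) by the negative root vectors $X_{-\alpha}$, $\alpha\in S\setminus S(\Gamma)$, together with the corresponding coroot directions. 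A cocycle on $\mathfrak g_{v_{\underline\lambda}}$ with values in $V/\mathfrak g.v_{\underline\lambda}$ of a definite $T_\ad$-weight $\gamma$ is, by the weight action $t.v=(\lambda_i-\mu)(t)v$, forced to send a root vector $X_\delta$ to a vector in $\bigoplus_i V(\lambda_i)_{\lambda_i-\gamma-\delta}$ modulo $\mathfrak g.v_{\underline\lambda}$; since Theorem~\ref{tgtspaceasmodule} tells us the target $(V/\mathfrak g.v_{\underline\lambda})^{G_{v_{\underline\lambda}}}$ is multiplicity free with weights the spherical roots, a dimension/weight count should pin down that each cocycle class has a unique representative supported on a single simple root.

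The second step is to check that $\varphi_{\alpha,\gamma}$ is indeed a cocycle, i.e. that $\varphi_{\alpha,\gamma}([X,Y])=X\cdot\varphi_{\alpha,\gamma}(Y)-Y\cdot\varphi_{\alpha,\gamma}(X)$ for all $X,Y\in\mathfrak g_{v_{\underline\lambda}}$. Because $\varphi_{\alpha,\gamma}$ vanishes on every root vector other than $X_\alpha$ and $\mathfrak g_{v_{\underline\lambda}}$ is spanned by root vectors $X_\delta$ with $\delta\notin -(S\setminus S(\Gamma))$ together with torus elements, the cocycle identity reduces to: (i) $X_\alpha$ itself need not lie in $\mathfrak g_{v_{\underline\lambda}}$ when $\alpha\in\supp\gamma$, and one must verify $\alpha\in S\setminus S(\Gamma)$ combined with $r=-(\gamma,\alpha^\vee)$ or $r=0$ makes $X_{-\alpha}^r v_\gamma$ well defined in $V/\mathfrak g.v_{\underline\lambda}$ and $G_{v_{\underline\lambda}}$-suitable; (ii) brackets $[X_\delta,X_\alpha]$ and $[H,X_\alpha]$ for $H$ in the torus part of $\mathfrak g_{v_{\underline\lambda}}$ land back in root spaces where $\varphi_{\alpha,\gamma}$ is prescribed — here one uses that $[X_\delta,X_\alpha]$ is either zero, or proportional to $X_{\delta+\alpha}$ which is again not a negative simple root in $S\setminus S(\Gamma)$, so $\varphi_{\alpha,\gamma}$ vanishes on it, and one must check the right-hand side $X_\delta\cdot\varphi_{\alpha,\gamma}(X_\alpha)-X_\alpha\cdot\varphi_{\alpha,\gamma}(X_\delta)=X_\delta\cdot(X_{-\alpha}^r v_\gamma)$ vanishes modulo $\mathfrak g.v_{\underline\lambda}$. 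This last vanishing is an $\mathfrak{sl}_2$-type computation: applying $X_\delta$ (a positive root vector in the stabilizer, so $X_\delta\cdot v_{\lambda_i}=0$) to $X_{-\alpha}^r v_\gamma$ and using $v_\gamma\in\mathfrak g.v_{\underline\lambda}$ or the explicit highest-weight structure of the $V(\lambda_i)$ to push it into $\mathfrak g.v_{\underline\lambda}$.

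The final step is to show these classes are linearly independent and exhaust $\mathop{H^1}$. Independence is immediate from the $T_\ad$-weights: $\varphi_{\alpha,\gamma}$ has weight $\gamma$ as a cocycle but, because it is nonzero only on $X_\alpha$, the pair $(\alpha,\gamma)$ is recovered from the class; distinct pairs give distinct classes. For exhaustion one combines the long exact cohomology sequence (or the description of $\mathfrak g_{v_{\underline\lambda}}$ as an extension of a reductive Levi by a nilpotent ideal) with the vanishing of $H^1$ of the reductive part, reducing $\mathop{H^1}(\mathfrak g_{v_{\underline\lambda}},V/\mathfrak g.v_{\underline\lambda})$ to $\mathop{H^1}$ of the nilpotent ideal with values in the module, computed via Kostant-type arguments; the surviving weights are exactly the spherical roots appearing in Table~\ref{sphericalroots}, so by a matching of dimensions with the multiplicity-free statement of Theorem~\ref{tgtspaceasmodule} the $\varphi_{\alpha,\gamma}$ already span. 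I expect the main obstacle to be step two: verifying the cocycle identity in all bracket cases, in particular handling the torus directions inside $\mathfrak g_{v_{\underline\lambda}}$ and the boundary case $\alpha\in\supp\gamma$ versus $\alpha\notin\supp\gamma$ (which is precisely why $r$ is defined by a case distinction), and checking that $X_{-\alpha}^r v_\gamma$ represents a nonzero class rather than something already in $\mathfrak g.v_{\underline\lambda}$ — this requires a careful look at the explicit weight strings in each $V(\lambda_i)$.
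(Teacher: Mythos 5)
Your overall architecture (construct the $\varphi_{\alpha,\gamma}$, verify the cocycle identity, then prove independence and exhaustion) is sensible, and your second step is close in spirit to what the paper does via Lemma~\ref{invariancecondition}. But the decisive step --- exhaustion, which is what the proposition actually asserts --- rests on two claims that do not hold as stated. First, you propose to compute $H^1(\mathfrak g_{v_{\underline\lambda}},V/\mathfrak g.v_{\underline\lambda})$ by Hochschild--Serre plus ``Kostant-type arguments'' for the nilpotent ideal. Kostant's theorem computes $H^*(\mathfrak n,M)$ for $M$ a $\mathfrak g$-module, but $V/\mathfrak g.v_{\underline\lambda}$ is only a $\mathfrak g_{v_{\underline\lambda}}$-module ($\mathfrak g.v_{\underline\lambda}$ is not $\mathfrak g$-stable), so the computation does not reduce to Kostant without analysing $H^*(\mathfrak n,\mathfrak g/\mathfrak g_{v_{\underline\lambda}})$ --- and that is exactly where the difficulty of the proposition lives. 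Second, Theorem~\ref{tgtspaceasmodule} is a statement about $H^0$, i.e.\ about the tangent space $(V/\mathfrak g.v_{\underline\lambda})^{G_{v_{\underline\lambda}}}$; it gives no a priori bound on $\dim H^1$, so there is no ``matching of dimensions'' available to conclude that your cocycles already span. Likewise, your reduction of an arbitrary weight cocycle to one supported on a single simple root vector is only asserted (``a dimension/weight count should pin down''), whereas it requires an actual argument.

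The paper proceeds in the opposite direction: it takes an arbitrary $T_\ad$-weight cocycle $\varphi$, proves via the identity $X_\beta\varphi(X_\alpha)-X_\alpha\varphi(X_\beta)\in\mathfrak g.v_{\underline\lambda}$ and a weight comparison that $X_\beta\varphi(X_\alpha)=0$ for all $\beta\neq\alpha$ (Lemma~\ref{invariancecondition}), and then runs a case-by-case analysis on the type of the support of the weight of $\varphi(X_\alpha)$, using the combinatorial constraints of Proposition~\ref{non-rigidity} and Proposition~\ref{supportofGamma} to force $[\varphi(X_\alpha)]=[X_{-\alpha}^rv_\gamma]$ with $\gamma\in\Sigma(\Gamma)$, or else to kill the class (for instance by showing it is not fixed by $G_{v_{\underline\lambda}}/G_{v_{\underline\lambda}}^\circ$). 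Your sketch never engages with why the value of the cocycle must lie on an $X_{-\alpha}$-string through a weight vector of the \emph{invariant} part of $V/\mathfrak g.v_{\underline\lambda}$; multiplicity-freeness of that invariant part does not give this, since a cocycle takes values in all of $V/\mathfrak g.v_{\underline\lambda}$. A smaller but genuine error: $\mathfrak g_{v_{\underline\lambda}}$ contains \emph{all} of $\mathfrak n^+$ (positive root vectors annihilate highest weight vectors), and its complement in $\mathfrak g$ is spanned by the $X_{-\beta}$ for all positive roots $\beta$ with $(\lambda_i,\beta^\vee)\neq0$ for some $i$, together with the relevant toral directions --- not merely by negative simple root vectors indexed by $S\setminus S(\Gamma)$.
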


The proof of Proposition~\ref{characterisationofH^1} is postponed to Section~\ref{Proofs}.

\section{Wonderful varieties}\label{wonderful}

\begin{definition}
An algebraic $G$-variety $X$ is said to be \textit{wonderful of rank $r$} if it satisfies the following conditions.
\smallbreak
\noindent
{\rm(i)}\enspace
$X$ is smooth and complete.
\smallbreak
\noindent
{\rm(ii)}\enspace
$X$ contains an open $G$-orbit whose complement is the union of $r$ smooth prime $G$-divisors $D_1,\ldots,D_r$ with normal crossings and such that $\cap_1^r D_i\neq\emptyset$.
\smallbreak
\noindent
{\rm(iii)}\enspace
The $G$-orbit closures of $X$ are given by the intersections $\cap_I D_i$ where $I$ is a subset of $\{1,\ldots,r\}$.
\end{definition}

As examples of wonderful varieties, one may consider flag varieties or 
De Concini-Procesi compactifications of symmetric spaces; \textsl{see}~\cite{DP83}.

Wonderful varieties are projective and spherical; \emph{see}~\cite{L96}.

\begin{definition}
A wonderful variety whose points have a selfnormalising stabilizer is called \textsl{strict}.
\end{definition}

Luna introduced several invariants attached to any wonderful $G$-variety $X$: \textsl{spherical roots, colors}.
We shall recall freely results concerning these notions; \textsl{see}~\cite{L97,L01} for details. 

Let $Y$ be the (unique) closed $G$-orbit of $X$ and
$z\in Y$ be the unique point fixed by the Borel subgroup $B^-$ of $G$ such that $B\cap B^-=T$.

The \emph{spherical roots} of $X$ are the $T$-weights of the quotient
$T_z X/T_z Y$ where $T_z X$ (resp. $T_z Y$) denotes the tangent space at $z$ of $X$ (resp. of $Y$).
The rank of $X$ is equal to the number of spherical roots of $X$. 

Let $P_X$ be the stabilizer of the point $z\in Y$.
The subgroup $P_X$ is a parabolic subgroup hence it corresponds to a subset $S^p_X$ of the set of simple roots $S$.

In case of strict wonderful varieties, the couple $(S^p_X, \Sigma_X,\emptyset)$ shares nice properties: it is \textit{a spherical system for $G$}.
Spherical systems were introduced by Luna as triples which fulfill certain axiomatic conditions.

Luna's conjecture asserts that there corresponds a unique (non-necessarily strict) wonderful $G$-variety to a given spherical system.

Before recalling Luna's definition of spherical systems in case the third datum is the empty set, let us set some further notation.
Let $S$ be the set of simple roots of $G$ relatively to $B$ and $T$.
Given a simple root $\alpha\in S$, let $\alpha^\vee$ be its associated coroot, that is $\alpha^\vee=2\alpha/(\alpha,\alpha)$.
Given $\beta=\sum_\alpha n_\alpha \alpha$ where the sum runs over $S$ with $n_\alpha\geq 0$ (resp. $n_\alpha\leq 0$) for all $\alpha$.
The \textsl{support} of $\beta$ is defined as usual as the set of simple roots $\alpha$ such that $n_\alpha\neq 0$; we shall denote it by $\supp\beta$. 

\subsection{Spherical systems}

\begin{definition}
\textsl{The set of spherical roots of $G$} is the set of characters of Table~\ref{sphericalroots} whose support is a subset of $S$. 
We denote it by $\Sigma(G)$.
\end{definition}

\begin{definition}(\cite[1.1.6]{BL08})
Let $S^p\subset S$ and $\sigma\in\sigma(G)$.
The couple $(S^p,\sigma)$ is said to be \textsl{compatible} if 
$$
S^{pp}(\sigma)\subset S^p\subset S^p(\sigma)
$$
where $S^{pp}(\sigma)$ is one of the following sets
\smallbreak\noindent
{\rm -}\enspace $S^p(\sigma)\cap\supp\sigma\setminus\{\alpha_r\}$ if $\sigma=\alpha_1+\ldots+\alpha_r$ with $\supp\sigma$ of type $\mathsf B_r$,
\smallbreak\noindent
{\rm -}\enspace $S^p(\sigma)\cap\supp(\sigma)\setminus\{\alpha_1\}$ if $\supp\sigma$ is of type $\mathsf C_r$,
\smallbreak\noindent
{\rm -}\enspace $S^p(\sigma)\cap\supp(\sigma)$ otherwise.

\end{definition}

\begin{definition}
Let $S^p\subset S$ and  $\Sigma\subset\Sigma(G)$.
The couple $(S^p,\Sigma)$ is called a \textsl{spherical system} if
\smallbreak
\noindent
{\rm($\Sigma 1$)}\enspace $(\alpha^\vee,\sigma)\in2\mathbb Z_{\leq 0}$ for all $\sigma\in\Sigma\setminus\{2\alpha\}$ and all $\alpha\in S $ such that $2\alpha\in \Sigma$.
\smallbreak
\noindent
{\rm($\Sigma 2$)}\enspace $(\alpha^\vee,\sigma)=(\beta^\vee,\sigma)$ for all $\sigma\in\Sigma$ and all $\alpha,\beta\in S$ which are mutually orthogonal and such that $\alpha+\beta\in\Sigma$.
\smallbreak\noindent
{\rm($S$)}\enspace The couple $(\{\sigma\},S^p)$ is compatible for any $\sigma\in\Sigma$.
\smallbreak\noindent
\smallbreak\noindent
{\rm($St$)}\enspace The couple $(\{2\sigma\},S^p)$ is not compatible for any $\sigma\in\Sigma$.
\end{definition}

\subsection{Colors}
Given a wonderful $G$-variety $X$, the $B$-stable but not $G$-stable prime divisors in $X$ are called the \textsl{colors of $X$}.
The subgroup $P_X$ (see the previous paragraph) coincides with the stabilizer of the colors of $X$.

\begin{definition}
\textsl{The set of colors $\Delta$} of a given spherical system $(S^p, \Sigma)$ is defined as the set of the following dominant weights:
\smallbreak
\noindent{\rm -}\enspace $\omega_\alpha$ (resp.  $2\omega_\alpha$) 
if $\alpha\in S\setminus S^p$ and $2\alpha\not\in\Sigma$ (resp. $2\alpha\in\Sigma$);
\smallbreak
\noindent{\rm -}\enspace $\omega_\alpha+\omega_\beta$ if $\alpha,\beta\in S$ and $\alpha+\beta\in\Sigma$.

Here $\omega_\alpha$ stands for the fundamental weight associated to any simple root $\alpha$, that is
$\omega_\alpha(\beta^\vee)$ equals $1$ if $\beta=\alpha$ and equals $0$ for any other simple root $\beta$.
\end{definition}

In case the spherical system is given by a wonderful $G$-variety $X$, the set $\Delta$ coincides with
the set of colors of $X$ in the following way.
Let $D$ be a color of $X$.
Consider its inverse image $\pi^{-1}(D)$ through the canonical quotient $\pi:G\rightarrow G/H$ 
where $G/H$ is isomorphic to the dense $G$-orbit of $X$.
Choose $H$  such that $BH$ is open in $G$ (recall that $X$ is $G$-spherical).
Then, whenever the Picard group of $G$ is trivial, $\pi^{-1}(D)$ is a $B\times H$-stable divisor of $G$ hence it has an equation $f_D$ which is a $B\times H$-eigenvector.
The set $\Delta$ is thus given by the set of the $B$-weights of the $f_D$'s. 

\section{Invariant Hilbert schemes and Wonderful varieties}\label{connection}

\subsection{Tangent spaces and spherical systems}

Given a saturated monoid $\Gamma$, recall the definition of the sets $\Sigma(\Gamma)$ and $S^p(\Gamma)$; \textsl{see} the paragraph right before Proposition~\ref{characterisationofH^1}.

\begin{theorem}(\cite[Theorem 4.1]{BC1})
The couple $(S^p(\Gamma),\Sigma(\Gamma))$ is a  spherical system.
\end{theorem}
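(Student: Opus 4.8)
The goal is to verify that the combinatorial datum $(S^p(\Gamma),\Sigma(\Gamma))$ attached to a saturated monoid $\Gamma$ satisfies all the axioms in the definition of a spherical system, namely $(\Sigma 1)$, $(\Sigma 2)$, $(S)$ and $(St)$. The plan is to extract these properties directly from the fact established in Theorem~\ref{tgtspaceasmodule}, that the tangent space $T_{X_0}\Hilb^G_\Gamma$ is a multiplicity-free $T_{\ad}$-module whose weights lie in Table~\ref{sphericalroots}, together with the defining property of $S^p(\Gamma)$ as the set of simple roots orthogonal to every element of $\Gamma$. First I would record that, by construction, $\Sigma(\Gamma)\subset\Sigma(G)$: every weight of the tangent space is a spherical root of $G$ since its support is a subset of $S$ (it is a $T_{\ad}$-weight, hence a nonnegative combination of simple roots lying in $S\setminus S^p(\Gamma)$). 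This makes the triple eligible to be tested against the axioms.

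Next I would treat the axioms one at a time. For $(\Sigma 1)$ and $(\Sigma 2)$, which are purely linear-algebraic constraints among pairs of simple roots and spherical roots in $\Sigma(\Gamma)$, I would argue that they hold automatically because of the way these weights arise: a weight $\gamma\in\Sigma(\Gamma)$ equals $\lambda_i-\mu$ for some $i$ and some $T$-weight $\mu$ of $V(\lambda_i)$ appearing in $(V/\mathfrak g.v_{\underline\lambda})^{G_{v_{\underline\lambda}}}$, so $\lambda_i-\gamma$ is again a weight of $V(\lambda_i)$. Pairing with a coroot $\alpha^\vee$ and using that $\lambda_i$ is dominant while $\lambda_i-\gamma$ is a weight of the irreducible module $V(\lambda_i)$ forces the integrality and sign conditions in $(\Sigma 1)$; similarly $(\Sigma 2)$ reduces to the symmetry $(\alpha^\vee,\lambda_i)=(\beta^\vee,\lambda_i)$ whenever $\alpha+\beta\in\Sigma(\Gamma)$, which can be read off from the shape of the corresponding entries of Table~\ref{sphericalroots} (the cases $\mathsf A_1\times\mathsf A_1$ and $\mathsf G_2$ with root $\alpha_1+\alpha_2$). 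In each instance the key input is that $\lambda_i-\gamma$ lies in the weight lattice of $V(\lambda_i)$, which in turn follows from the description of the tangent space as a $G_{v_{\underline\lambda}}$-invariant subspace of $V/\mathfrak g.v_{\underline\lambda}$.

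The compatibility conditions $(S)$ and $(St)$ are where the interplay between $\Sigma(\Gamma)$ and $S^p(\Gamma)$ really enters. For $(S)$ I would show $S^{pp}(\sigma)\subset S^p(\Gamma)\subset S^p(\sigma)$ for each $\sigma\in\Sigma(\Gamma)$: the inclusion $S^p(\Gamma)\subset S^p(\sigma)$ amounts to saying that a simple root orthogonal to all of $\Gamma$ is orthogonal to $\sigma$ in the parabolic sense, and the inclusion $S^{pp}(\sigma)\subset S^p(\Gamma)$ uses that the relevant simple roots in the support of $\sigma$ are orthogonal to the $\lambda_i$ (because $\lambda_i-\sigma$ is a weight of $V(\lambda_i)$ and one checks, type by type through Table~\ref{sphericalroots}, that the "interior" simple roots of $\supp\sigma$ pair to zero with $\lambda_i$). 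For $(St)$ — the strictness axiom — I would argue that $(\{2\sigma\},S^p(\Gamma))$ cannot be compatible: if it were, then $2\sigma$ together with $S^p(\Gamma)$ would be forced, but $2\sigma$ is not a $T_{\ad}$-weight of the tangent space (the only doubled spherical roots allowed by Table~\ref{sphericalroots} are of the specific forms $2\alpha$, $2\alpha_1+\dots+2\alpha_n$, etc., and $2\sigma$ for $\sigma\in\Sigma(\Gamma)$ is never of one of these forms unless $\sigma$ itself is already $2\alpha$, which is excluded by a direct inspection), contradicting the multiplicity-free description. I expect the main obstacle to be organizing the verification of $(S)$ and $(St)$ without lapsing back into the case-by-case analysis the paper wants to avoid — here one must lean on the uniform characterization of $S^p(\Gamma)$ via orthogonality to $\Gamma$ and on Proposition~\ref{characterisationofH^1}, which pins down exactly which cocycles, hence which pairs $(\alpha,\gamma)$, can occur, so that the combinatorial axioms become consequences of the representation-theoretic constraints rather than of an enumeration.
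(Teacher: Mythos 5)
First, a point of comparison: the paper does not actually prove this statement --- it is imported verbatim from \cite[Theorem~4.1]{BC1} and used as a black box, so there is no in-text proof to measure you against. Judged on its own terms, your plan (verify $(\Sigma 1)$, $(\Sigma 2)$, $(S)$, $(St)$ directly from Theorem~\ref{tgtspaceasmodule} and the definition of $S^p(\Gamma)$) is the right general strategy, and your treatment of $(S)$ via the orthogonality mechanism of Lemma~\ref{rootsupport} is in the right spirit. But two of your steps have genuine gaps. The ``key input'' you isolate --- that $\lambda_i-\gamma$ lies in the weight lattice of $V(\lambda_i)$ --- is far too weak to yield $(\Sigma 1)$ or $(\Sigma 2)$: it only gives integrality of $(\alpha^\vee,\gamma)$, which is automatic since $\gamma$ is a sum of simple roots. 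The input that actually does the work is that $\gamma$ belongs to $\mathbb Z\Gamma$ (this comes from invariance of the weight vector under the torus part of $G_{v_{\underline\lambda}}$, and is used explicitly in the paper, e.g.\ in the proof of Lemma~\ref{lemma2injectivity}), combined with the structural facts of Lemma~\ref{rootsupport}, Lemma~\ref{weightvector} and Lemma~\ref{ruleout}. For $(\Sigma 2)$, for instance, one writes $(\alpha^\vee,\sigma)-(\beta^\vee,\sigma)=\sum_i m_i\bigl((\alpha^\vee,\lambda_i)-(\beta^\vee,\lambda_i)\bigr)$ and then needs Lemma~\ref{ruleout} to know that exactly one $\lambda_i$ meets $\{\alpha,\beta\}$ and pairs equally with both; nothing of this follows from $\lambda_i-\gamma$ being a weight of $V(\lambda_i)$. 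Similarly, $(\Sigma 1)$ requires knowing how the presence of $2\alpha$ in $\Sigma(\Gamma)$ constrains the $\lambda_i$ (via the rank-one classification of \cite{Js}) before the sign and parity of $(\alpha^\vee,\sigma)$ can be controlled; you assert the conclusion without this.

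The more serious problem is $(St)$. You argue that $(\{2\sigma\},S^p(\Gamma))$ cannot be compatible because ``$2\sigma$ is not a $T_{\ad}$-weight of the tangent space, contradicting the multiplicity-free description.'' This misreads the axiom: $(St)$ is a purely combinatorial non-compatibility condition on the couple $(\{2\sigma\},S^p)$, i.e.\ the failure of $S^{pp}(2\sigma)\subset S^p(\Gamma)\subset S^p(2\sigma)$; it is not the assertion that $2\sigma\notin\Sigma(\Gamma)$, and multiplicity-freeness of the tangent space is irrelevant to it. The condition is non-vacuous precisely when both $\sigma$ and $2\sigma$ occur in Table~\ref{sphericalroots} --- essentially $\sigma=\alpha_1+\ldots+\alpha_n$ of type $\mathsf B_n$ versus $2\sigma=2\alpha_1+\ldots+2\alpha_n$ --- and there the two compatibility ranges differ only in whether the short simple root $\alpha_n$ lies in $S^p$. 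Verifying $(St)$ therefore requires showing that when $\alpha_1+\ldots+\alpha_n\in\Sigma(\Gamma)$ the relevant $\lambda_i$ is forced (again by the rank-one analysis of \cite{Js} and Lemma~\ref{weightvector}) to be non-orthogonal to $\alpha_n$, so that $\alpha_n\notin S^p(\Gamma)$ and compatibility with $2\sigma$ fails. Your proposal does not contain this argument, and the reasoning it offers in its place would not produce it.
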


Denote by $\Delta(\Gamma)$ the set of colors of $(S^p(\Gamma),\Sigma(\Gamma))$.
Let $S(\gamma)$ be the set of simple roots $\delta$ such that $\gamma-\delta$ is a root.

\begin{proposition}\label{non-rigidity}
Assume that the tangent space $T_{X_0}\Hilb^G_\Gamma$ is not trivial. 
Let $\lambda$ be one of the dominant weights defining the monoid $\Gamma$.
If the support of the set $\Sigma(\Gamma)$ coincides with the set of simple roots $S$ then one of the following possibilities may occur.
\smallbreak{\rm -}\enspace
$\lambda$ belongs to $\Delta(\Gamma)$ up to a scalar,
\smallbreak{\rm -}\enspace
$\lambda$ equals $\omega_\alpha+\omega_\beta$ if there exists $\gamma\in\Sigma(\Gamma)$ such that 
$(\gamma,\alpha)>0$ and $(\gamma,\beta)>0$,
\smallbreak{\rm -}\enspace
$\lambda=\omega_\alpha+\sum a_\delta\omega_\delta$ with $(\gamma,\delta)=0$ along with $S(\gamma)\neq\{\alpha,\beta\}$ for all $\gamma\in\Sigma(\Gamma)$.

In case $\lambda=a\lambda_D$ with $a>1$ and $\lambda_D\in\Delta(\Gamma)$, the set $\Sigma(\Gamma)$ has to be a singleton.
Further, if the second possibility occurs for some $\lambda$ then $(\lambda',\alpha+\beta)=0$ for all $\lambda'\neq\lambda$.
\end{proposition}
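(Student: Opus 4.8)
The plan is to exploit the isomorphism $T_{X_0}\Hilb^G_\Gamma\cong(V/\mathfrak g.v_{\underline\lambda})^{G_{v_{\underline\lambda}}}$ and to read off the constraints on a generator $\lambda$ of $\Gamma$ from the way a $G_{v_{\underline\lambda}}$-fixed vector of a prescribed weight $\gamma\in\Sigma(\Gamma)$ must behave under the explicitly known part of the isotropy group. First I would record that, $v_{\underline\lambda}$ being the sum of highest weight vectors lying in pairwise non-isomorphic irreducible summands of $V$, one has $\mathfrak g.v_{\underline\lambda}=\bigoplus_i\mathfrak b^-.v_{\lambda_i}$, whereas $\mathfrak g_{v_{\underline\lambda}}$ contains $\mathfrak n$, the root spaces $\mathfrak g_{\pm\alpha}$ for $\alpha\in S(\Gamma)$ and $\mathfrak t\cap\bigcap_i\ker\lambda_i$. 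For $\gamma\in\Sigma(\Gamma)$ I would pick a representative $v_\gamma=\sum_i v_\gamma^{(i)}$, $v_\gamma^{(i)}\in V(\lambda_i)_{\lambda_i-\gamma}$, of the weight-$\gamma$ fixed vector, which is unique up to a scalar by Theorem~\ref{tgtspaceasmodule}; the invariance condition then becomes, component by component, $X.v_\gamma^{(i)}\in\mathfrak b^-.v_{\lambda_i}$ for all $X\in\mathfrak g_{v_{\underline\lambda}}$ and all $i$, together with invariance under the component group $G_{v_{\underline\lambda}}/G_{v_{\underline\lambda}}^\circ$.

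The combinatorial core is the following local analysis. Taking $X=X_\alpha\in\mathfrak n$ with $\alpha$ simple, the weight of $X_\alpha.v_\gamma^{(i)}$ is $\lambda_i-(\gamma-\alpha)$; since $\mathfrak b^-.v_{\lambda_i}$ only meets the weights $\lambda_i$ and $\lambda_i-\beta$ with $\beta$ a positive root, either $X_\alpha.v_\gamma^{(i)}=0$, or $\gamma-\alpha$ is a positive root, that is $\alpha\in S(\gamma)$, and then $X_\alpha.v_\gamma^{(i)}\in\mathbb C\,X_{-(\gamma-\alpha)}.v_{\lambda_i}$, which forces $(\lambda_i,(\gamma-\alpha)^\vee)>0$. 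As $\gamma$ is a nonzero element of Table~\ref{sphericalroots}, hence not a simple root, $v_\gamma^{(i)}$ (when nonzero) is not a highest weight vector, so $\{\alpha\in S:X_\alpha.v_\gamma^{(i)}\neq0\}$ is a nonempty subset of $S(\gamma)$; feeding in the analogous constraints coming from $\mathfrak g_{-\alpha}$, $\alpha\in S(\Gamma)$, from $\mathfrak t\cap\bigcap_j\ker\lambda_j$ (which forces $\gamma$ to vanish on that subspace, whence $\gamma$ lies in the rational span of $\underline\lambda$) and from the component group, one pins $v_\gamma^{(i)}$ down, up to a scalar, on a short weight string of $V(\lambda_i)$ issued from an extremal vector. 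Together with the saturation of $\Gamma$ (so that $\gamma\in\mathbb Z\Gamma$), this bounds the coefficients $(\lambda_i,\alpha^\vee)$ for $\alpha\in\supp\gamma$ in terms of the entries of Table~\ref{sphericalroots}.

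Now fix a generator $\lambda=\lambda_i$ and write $\lambda=\sum_{\alpha\in\supp\lambda}a_\alpha\omega_\alpha$. Using $\supp\Sigma(\Gamma)=S$, for each $\alpha\in\supp\lambda$ choose $\gamma\in\Sigma(\Gamma)$ with $\alpha\in\supp\gamma$; the bounds of the previous paragraph, combined with the way the constraints coming from the various $\gamma\in\Sigma(\Gamma)$ interact, show that as soon as $\Sigma(\Gamma)$ is not a singleton each $a_\alpha$ equals $1$, or $2$ exactly when $2\alpha\in\Sigma(\Gamma)$, while the remaining nodes $\delta\in\supp\lambda$ must be orthogonal to every element of $\Sigma(\Gamma)$. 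Running through the resulting shapes --- $\lambda=\omega_\alpha$; $\lambda=2\omega_\alpha$ with $2\alpha\in\Sigma(\Gamma)$; $\lambda=\omega_\alpha+\omega_\beta$ with $\alpha+\beta\in\Sigma(\Gamma)$; $\lambda=\omega_\alpha+\omega_\beta$ with some $\gamma\in\Sigma(\Gamma)$ such that $(\gamma,\alpha)>0$ and $(\gamma,\beta)>0$; or $\lambda=\omega_\alpha+\sum_\delta a_\delta\omega_\delta$ with all the $\delta$ orthogonal to $\Sigma(\Gamma)$ and $S(\gamma)\neq\{\alpha,\delta\}$ for every $\gamma\in\Sigma(\Gamma)$ --- and comparing the first three with the definition of $\Delta(\Gamma)$, one gets the stated trichotomy, the condition on $S(\gamma)$ being precisely what keeps the last alternative disjoint from the $\omega_\alpha+\omega_\beta$ cases; when $\Sigma(\Gamma)$ is a singleton the same discussion still places $\lambda$, up to a scalar, in $\Delta(\Gamma)$ or in the last alternative.

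The two closing assertions then follow. If $\lambda=a\lambda_D$ with $a>1$ and $\lambda_D\in\Delta(\Gamma)$, then $\Sigma(\Gamma)$ cannot fail to be a singleton: otherwise, by the previous paragraph each coefficient of $\lambda$ is $1$, or $2$ precisely when the corresponding $2\alpha$ lies in $\Sigma(\Gamma)$, in which case $2\omega_\alpha$ is itself the color and $a=1$, a contradiction. For the last statement, suppose the second possibility holds for $\lambda=\omega_\alpha+\omega_\beta$, with $\gamma\in\Sigma(\Gamma)$, $(\gamma,\alpha)>0$, $(\gamma,\beta)>0$; applying the local analysis of the second paragraph to any other generator $\lambda'$ shows that a nonzero component $v_\gamma^{(i')}$ would produce a second fixed vector of weight $\gamma$ in $V(\lambda')$ unless $(\lambda',\alpha^\vee)=(\lambda',\beta^\vee)=0$, and the multiplicity-freeness of $T_{X_0}\Hilb^G_\Gamma$ forces the latter, that is $(\lambda',\alpha+\beta)=0$. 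I expect the main obstacle to be the bookkeeping of the second and third paragraphs: turning the single local condition ``$X_\alpha.v_\gamma^{(i)}\in\mathfrak b^-.v_{\lambda_i}$ for all simple $\alpha$'' into an exhaustive list of admissible supports and coefficients of $\lambda$ requires a careful study of the weight strings of $V(\lambda_i)$ around $\lambda_i-\gamma$ together with a type-by-type check against Table~\ref{sphericalroots}.
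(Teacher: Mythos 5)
Your starting identity is false, and the error sits exactly where the proposition is hardest. You write $\mathfrak g.v_{\underline\lambda}=\bigoplus_i\mathfrak b^-.v_{\lambda_i}$ and then replace the invariance condition by the component-wise condition $X.v_\gamma^{(i)}\in\mathfrak b^-.v_{\lambda_i}$ for each $i$. But $\mathfrak g.v_{\underline\lambda}$ is the image of the \emph{diagonal} map $Y\mapsto\sum_i Y.v_{\lambda_i}$, hence only a subspace of $\bigoplus_i\mathfrak g.v_{\lambda_i}$, and a proper one as soon as some negative root vector acts nontrivially on two of the $v_{\lambda_i}$ (already for $G=\mathrm{SL}_2$ with $\lambda_1=\omega$, $\lambda_2=2\omega$ the left side is $3$-dimensional and the right side $4$-dimensional). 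Your component-wise condition is therefore only necessary, not equivalent, and the information it discards --- that a \emph{single} $Y\in\mathfrak g$ must produce all components of $X.v_\gamma$ simultaneously --- is precisely the engine of the paper's argument. In the proof of Lemma~\ref{ruleout}, which is what yields the final assertion $(\lambda',\alpha+\beta)=0$, one shows $X_\beta.v_\gamma$ is a nonzero multiple of $X_{-\gamma+\beta}v_\lambda$ and then concludes from $X_\beta.v_\gamma\in\mathfrak g.v_{\underline\lambda}$ that $X_{-\gamma+\beta}v_{\lambda_i}=0$ for every $\lambda_i\neq\lambda$, i.e.\ that the other generators are orthogonal to $\alpha$; the same coupling drives the lemma asserting that two generators non-orthogonal to a common simple root $\alpha$ force $(\gamma,\alpha)=0$ for all $\gamma\in\Sigma(\Gamma)$. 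Your substitute for this step --- invoking multiplicity-freeness of the tangent space to rule out a second fixed vector in $V(\lambda')$ --- again presupposes that fixed vectors decompose component by component, which fails for the same reason.

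Beyond this, the combinatorial core is asserted rather than proved: the sentence claiming that ``the bounds of the previous paragraph, combined with the way the constraints coming from the various $\gamma\in\Sigma(\Gamma)$ interact,'' force each coefficient $a_\alpha$ to be $1$ (or $2$ exactly when $2\alpha\in\Sigma(\Gamma)$) is the entire content of the proposition, and in the paper it is carried by five separate lemmas resting on Lemma~\ref{rootsupport}, Lemma~\ref{weightvector}, Lemma~\ref{f4}, the saturation of $\Gamma$, and a type-by-type reading of Table~\ref{sphericalroots}. Your closing remark that the ``bookkeeping'' remains to be done is accurate, but that bookkeeping is the proof; as written, the proposal establishes only the (correct) local observation that $X_\alpha.v_\gamma^{(i)}$ is either zero or proportional to $X_{-(\gamma-\alpha)}v_{\lambda_i}$, and cannot reach the stated trichotomy or either of the two closing assertions without first restoring the coupled form of the invariance condition.
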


As a converse, we have

\begin{proposition}\label{converse}
Given a spherical system $(S^p,\Sigma)$ of $G$, let $\Delta$ be its set of colors.
Let $\Gamma$ be a saturated monoid spanned by linearly independent dominant weights which satisfy the conditions stated in the previous proposition (with $\Delta(\Gamma)=\Delta$ and $\Sigma(\Gamma)=\Sigma$).
Then the tangent space of the corresponding invariant Hilbert scheme is not trivial.
Further the elements of $\Sigma$ are $T_\ad$-weights of this tangent space.
\end{proposition}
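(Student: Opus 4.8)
The plan is to reverse-engineer the analysis of Proposition~\ref{non-rigidity}: there we started from a non-trivial tangent space and a dominant weight $\lambda\in\underline\lambda$ and extracted combinatorial constraints; here we are handed a spherical system $(S^p,\Sigma)$, a compatible saturated monoid $\Gamma$ whose generators $\lambda_1,\dots,\lambda_s$ obey those very constraints, and we must produce a non-zero element of $T_{X_0}\Hilb^G_\Gamma$ whose $T_\ad$-weights include all of $\Sigma$. By Theorem~\ref{tgtspaceasmodule} together with Proposition~\ref{characterisation} and Proposition~\ref{characterisationofH^1}, it suffices to exhibit, for each $\gamma\in\Sigma$, a $G_{v_{\underline\lambda}}$-invariant class in $H^1(\mathfrak g_{v_{\underline\lambda}},V/\mathfrak g.v_{\underline\lambda})$ of weight $\gamma$ that dies in $\oplus_{i,j}H^1(\mathfrak g_{v_{\underline\lambda}},K_{i,j})$; equivalently, a weight vector $v_\gamma\in\oplus_i V(\lambda_i)_{\lambda_i-\gamma}$ giving rise to the cocycle $\varphi_{\alpha,\gamma}$ of Proposition~\ref{characterisationofH^1} and satisfying the cocycle/boundary conditions imposed by $\mathfrak g_{v_{\underline\lambda}}$ and by the relations $K_{i,j}$.

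First I would use the combinatorial hypotheses on $\Gamma$ — that $\Delta(\Gamma)=\Delta$, that each generator $\lambda$ is (up to scalar) a color, or a sum $\omega_\alpha+\omega_\beta$ attached to a spherical root $\alpha+\beta\in\Sigma$, or of the mixed shape $\omega_\alpha+\sum a_\delta\omega_\delta$ with the stated orthogonality — to pin down, for each $\gamma\in\Sigma$, which simple roots $\alpha\in S\setminus S^p$ satisfy $(\gamma,\alpha^\vee)<0$ and on which generator $\lambda_i$ the weight space $V(\lambda_i)_{\lambda_i-\gamma}$ is non-zero. The shape of $\gamma$ (read off Table~\ref{sphericalroots}) together with $\supp\gamma$ determines $S(\gamma)$, hence $r=-(\gamma,\alpha^\vee)$, and the colour structure guarantees that the candidate vector $v_\gamma=\sum_i v_{\lambda_i,\lambda_i-\gamma}$ is non-zero (non-degeneracy of $X_0$ enters here). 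Next I would verify that $\varphi_{\alpha,\gamma}$ is a genuine $\mathfrak g_{v_{\underline\lambda}}$-cocycle: this is where the axioms of a spherical system do the work — $(\Sigma1)$ and $(\Sigma2)$ control the interaction of $2\alpha$-type and $\alpha+\beta$-type roots, $(S)$ (compatibility of $(\{\sigma\},S^p)$) guarantees that the relevant lowering operators $X_{-\alpha}^r$ act inside $V(\lambda_i)$ without leaving the $G_{v_{\underline\lambda}}$-stable subspace, and $(St)$ rules out the degenerate $2\gamma$ situation that would make the class a coboundary. Then I would check that the image of this class in each $H^1(\mathfrak g_{v_{\underline\lambda}},K_{i,j})$ vanishes, i.e. that the element $\sum_i v_\gamma\cdot v_{\lambda_i}$ already lies in $\mathfrak g\cdot v_{\underline\lambda}$ modulo $K_{i,j}$; this is exactly the surjectivity onto $V(\lambda_i+\lambda_j)$ in the Cartan component, which holds because $\Gamma$ is saturated. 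Finally, multiplicity-freeness (Theorem~\ref{tgtspaceasmodule}) shows the classes for distinct $\gamma\in\Sigma$ are linearly independent, so all of $\Sigma$ occurs among the $T_\ad$-weights of $T_{X_0}\Hilb^G_\Gamma$, which is therefore non-trivial.

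The main obstacle I anticipate is the cocycle verification in the cases where $\supp\gamma$ is not of type $\mathsf{A}$ — e.g. the $\mathsf{B}_n$, $\mathsf{C}_n$, $\mathsf{D}_n$, $\mathsf{F}_4$, $\mathsf{G}_2$ rows of Table~\ref{sphericalroots} — because there $r=-(\gamma,\alpha^\vee)$ can exceed $1$ and one must confirm that $X_{-\alpha}^r v_\gamma$ is the correct non-zero weight vector and that the Jacobi-type identities defining a $1$-cocycle close up; this is precisely the point at which the original proofs invoked the classification, and replacing that input by a direct check using only $(\Sigma1)$, $(\Sigma2)$, $(S)$, $(St)$ and saturation of $\Gamma$ is the delicate part. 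A secondary subtlety is keeping track of the $G_{v_{\underline\lambda}}/G^\circ_{v_{\underline\lambda}}$-invariance required in Proposition~\ref{characterisation} when $P_{\underline\lambda}$ is larger than the standard parabolic $P(\Sigma)$ — here the ``strictness'' axiom $(St)$ and the fact that $\Gamma$ is generated by colors (so $G_{v_{\underline\lambda}}$ is self-normalising up to the torus) ensure the group of components acts trivially on the relevant cocycles, but this must be spelled out.
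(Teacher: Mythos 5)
There is a genuine gap, and it is conceptual rather than computational: you have aimed your construction at the wrong space. The tangent space $T_{X_0}\Hilb^G_\Gamma$ is, by the quoted result of Alexeev--Brion, the space of \emph{invariants} $\left(V/\mathfrak g.v_{\underline\lambda}\right)^{G_{v_{\underline\lambda}}}$ --- a degree-zero object. The first cohomology $H^1(\mathfrak g_{v_{\underline\lambda}},V/\mathfrak g.v_{\underline\lambda})$, the cocycles $\varphi_{\alpha,\gamma}$ of Proposition~\ref{characterisationofH^1}, and the map to $\oplus_{i,j}H^1(\mathfrak g_{v_{\underline\lambda}},K_{i,j})$ of Proposition~\ref{characterisation} all pertain to the \emph{obstruction} space $(T^2_{X_0})^G$, which enters only in the proof of smoothness. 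So the class you propose to exhibit --- a weight-$\gamma$ element of $H^1$ killed by the map to $\oplus_{i,j}H^1(\cdot,K_{i,j})$ --- would, if it existed, be a non-zero element of $(T^2_{X_0})^G$; far from proving Proposition~\ref{converse}, this would contradict Theorem~\ref{smoothness}, whose whole content is that this kernel vanishes. The cocycle conditions, the role of $(\Sigma1)$, $(\Sigma2)$, $(S)$, $(St)$ in closing up Jacobi identities, and the vanishing in $H^1(K_{i,j})$ are therefore not the conditions your candidate vector needs to satisfy.

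What is actually required is much more elementary: for each $\gamma\in\Sigma$ you must produce a weight vector $v_\gamma\in\oplus_i V(\lambda_i)_{\lambda_i-\gamma}$ whose class in $V/\mathfrak g.v_{\underline\lambda}$ is non-zero and fixed by $G_{v_{\underline\lambda}}$, i.e.\ $X_\beta v_\gamma\in\mathfrak g.v_{\underline\lambda}$ for every root $\beta$ of the isotropy algebra, together with invariance under the component group. The paper does this by splitting into two cases according to how many of the generators $\lambda_i$ are non-orthogonal to $\gamma$: when there is a unique such $\lambda_i$ (the generic case, identified via the combinatorial description of $\Gamma$, Lemma~\ref{weightvector} and Lemma~\ref{f4}), the statement reduces to Jansou's rank-one computation in \cite{Js}; when there are several, $S(\gamma)$ is a $2$-set, Table~\ref{sphericalroots} forces $\gamma$ to be a root, and the explicit vector $X_{-\gamma}v_{\underline\lambda}$ does the job. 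Your first paragraph does gesture at a weight vector $v_\gamma\in\oplus_i V(\lambda_i)_{\lambda_i-\gamma}$, and your use of the colour hypotheses to locate which $V(\lambda_i)_{\lambda_i-\gamma}$ are non-zero is in the right spirit; but the verification you then set up (cocycle identities, vanishing in $H^1(K_{i,j})$, the ``delicate part'' in types $\mathsf B_n$, $\mathsf C_n$, etc.) addresses the obstruction calculus and would need to be replaced wholesale by the invariance check just described.
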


\begin{remark}
In Section 6 of \cite{BC1}, one can find analogous versions of the two above statements whose proofs are quite indirect since requiring the use of wonderful varieties. We will provide a purely combinatorial proof; \textsl{see} Section~\ref{Proofs}.
\end{remark}

\begin{definition}
A spherical system $(S^p,\Sigma)$ of $G$ is \textsl{primitive} if
the support of $\Sigma$ coincides with the whole set $S$ of simple roots of $G$,
and if there is no spherical system $(S^p,\Sigma')$ such that $\Sigma'$ strictly contains $\Sigma$.
\end{definition}

\subsection{Classification of wonderful varieties}

Given the decomposition of $V$ into irreducible $G$-modules,
$$
V= V(\lambda_1)\oplus\ldots\oplus V(\lambda_s)
$$ 
the corresponding $s$-multi-cone $\mathcal C(X)$ generated by any $X\subset V$ is (after~\cite{BK})
$$
\cup_{\underline t\in\mathbb C^s}\underline t.X
$$
where
$$
\underline t.X=\{(t_1 x_1,\ldots,t_s x_s): \underline t=(t_i)_i\in\mathbb C^s, (x_i)_i\in X\}.
$$

\begin{theorem}\label{classification}
Take a primitive spherical system $(S^p,\Sigma)$ and let $\Gamma$ be the monoid spanned by its set of colors $\lambda_i$ ($i=1,\ldots,s$).
Consider the invariant Hilbert scheme $\Hilb_{\underline\lambda}^G$.
Let $X_1$ be a closed point such that its $T_\ad$-orbit is dense within $\Hilb_{\underline\lambda}^G$.
Regarding $X_1$ as a subvariety of $V=\oplus_i V(\lambda_i)$, let $\mathcal C(X_1)$ be the corresponding $s$-multi-cone.

Then the multihomogeneous spectrum of the regular ring $\mathcal R(\mathcal C(X_1))$ of $\mathcal C(X_1)$
$$
X_\Gamma=\mathrm{Proj}\mathcal R(\mathcal C(X_1))
$$
is a wonderful $G$-variety with spherical system $(S^p,\Sigma)$.
\end{theorem}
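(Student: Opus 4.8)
The plan is to realize the wonderful variety $X_\Gamma$ as a distinguished geometric object sitting inside the universal family over $\Hilb_{\underline\lambda}^G$, and to verify the three axioms of a wonderful variety together with the identification of its spherical system. First I would exploit Corollary~\ref{affinity}: since $\Hilb_{\underline\lambda}^G$ is an affine space on which $T_{\ad}$ acts with finitely many orbits, there is a unique closed point $X_1$ whose $T_{\ad}$-orbit is dense, and by Proposition~\ref{characterisationofH^1} together with Proposition~\ref{converse} the weights of the tangent space $T_{X_0}\Hilb_{\underline\lambda}^G$ are exactly the elements of $\Sigma=\Sigma(\Gamma)$ (this is where primitivity of $(S^p,\Sigma)$ enters: it guarantees $\Sigma(\Gamma)=\Sigma$ and $S^p(\Gamma)=S^p$, and that $\Gamma$ is generated by the colors). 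The generic point $X_1$ is then a non-degenerate spherical $G$-subvariety of $V$ whose weight monoid is $\Gamma$, and whose ``moved'' directions from $X_0$ are precisely the spherical roots in $\Sigma$.

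Next I would analyze the multi-cone $\mathcal C(X_1)=\cup_{\underline t\in\mathbb C^s}\underline t.X_1$ and its homogeneous coordinate ring $\mathcal R(\mathcal C(X_1))$, graded by $\mathbb Z^s$ via the $s$ homogeneity degrees. The key point is that $\mathcal R(\mathcal C(X_1))$ is a normal, finitely generated, multi-graded $G$-algebra which is multiplicity-free as a $G$-module, with weight monoid contained in $\Gamma$; its multihomogeneous spectrum $X_\Gamma=\mathrm{Proj}\,\mathcal R(\mathcal C(X_1))$ is therefore a normal projective $G$-variety which is spherical. To see it is \emph{wonderful} with the prescribed combinatorial invariants, I would identify its $B$-stable divisors: the colors are governed by the generators $\lambda_1,\dots,\lambda_s$ of $\Gamma$ (which by hypothesis are the colors of $(S^p,\Sigma)$), while the boundary $G$-divisors $D_1,\dots,D_r$ correspond to the spherical roots $\Sigma$ via the valuations attached to the one-parameter degenerations inside $\mathcal C(X_1)$ — concretely, degenerating $X_1$ to $X_0$ along a generic one-parameter subgroup of $T_{\ad}$ produces, after taking Proj, the closed $G$-orbit, and the individual spherical roots index the $r$ smooth prime divisors with normal crossings. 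That $P_{X_\Gamma}$ corresponds to $S^p$ follows since $P_{X_\Gamma}$ stabilizes the colors, whose $B$-weights are the $\lambda_i$, and $S^p$ is recovered as the set of simple roots orthogonal to all $\lambda_i$ after removing the contribution of $\Sigma$, i.e. $S^p=S^p(\Gamma)$.

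The main obstacle I expect is proving that $X_\Gamma$ is genuinely \emph{smooth} with \emph{normal crossing} boundary of the correct rank $r=|\Sigma|$, rather than merely normal and spherical: a priori $\mathrm{Proj}$ of a normal multi-graded ring need only be normal. The right way to handle this is to use the Luna--Vust / Losev-style dictionary together with the combinatorial axioms $(\Sigma 1)$, $(\Sigma 2)$, $(S)$, $(St)$ satisfied by $(S^p,\Sigma)$: one shows that the colored fan (equivalently, the set of $G$-invariant valuations and colors) attached to $X_\Gamma$ is exactly the one prescribing a wonderful variety — namely the fan whose maximal colored cone is spanned by the full set of spherical roots and all colors — and invokes the fact that the corresponding embedding is smooth precisely because the spherical roots form a basis of a lattice and the axioms force the local structure at the closed orbit to be a product of $\mathbb{A}^1$'s transverse to the closed orbit $G/P_X$. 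Here Theorem~\ref{smoothness}, via the vanishing of $(T^2_{X_0})^G$, is used to guarantee that $\Hilb_{\underline\lambda}^G$ has no obstructed deformations, so that the generic fiber $X_1$ is a flat, smoothly-varying degeneration of $X_0$ and the Proj construction inherits smoothness along the boundary. Once smoothness and the normal-crossing condition are established, condition (iii) on $G$-orbit closures is automatic from the description of the colored fan, and by comparing invariants we conclude that $X_\Gamma$ is the wonderful $G$-variety with spherical system $(S^p,\Sigma)$, which also recovers (via Luna's correspondence) its uniqueness and hence the classification.
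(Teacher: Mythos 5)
Your opening moves coincide with the paper's: existence of $X_1$ via Proposition~\ref{converse}, Theorem~\ref{smoothness} and Corollary~\ref{affinity}, the role of primitivity in forcing $\Sigma(\Gamma)=\Sigma$, and the passage to the multi-cone and its $\mathrm{Proj}$. Where you diverge is at the one step that carries all the weight, namely smoothness of $X_\Gamma$ and the identification of its boundary, and there your argument has a genuine gap. You propose to read off smoothness from the Luna--Vust colored fan of $X_\Gamma$ together with the axioms $(\Sigma1),(\Sigma2),(S),(St)$, asserting that these ``force the local structure at the closed orbit to be a product of $\mathbb{A}^1$'s.'' This is circular: to write down the colored fan of $X_\Gamma$ you must already know the lattice and the spherical roots of its open orbit $G/H$, and the canonical embedding of $G/H$ is smooth precisely when those spherical roots form a basis of that lattice --- which is exactly the conclusion $\Sigma_{X_\Gamma}=\Sigma$ you are trying to establish. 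Deducing this from the combinatorial axioms alone, with no information about $G/H$, is essentially Luna's conjecture itself, i.e.\ the statement the paper is constructing a new route to; it is not an available black box. Likewise, the sentence ``Theorem~\ref{smoothness} \ldots guarantees that the generic fiber $X_1$ is a flat, smoothly-varying degeneration of $X_0$ and the Proj construction inherits smoothness along the boundary'' is a non sequitur: smoothness of the moduli space $\Hilb_{\underline\lambda}^G$ says nothing direct about smoothness of an individual fiber, let alone of the $\mathrm{Proj}$ of its multi-cone. (A small further slip: the colored cone of a wonderful variety is the valuation cone with \emph{empty} set of colors, not a cone ``spanned by the spherical roots and all colors.'')

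The paper's route at this point is more concrete. Because the $T_{\ad}$-orbit of $X_1$ is dense in the affine space $\Hilb_{\underline\lambda}^G$ and the $\lambda_i$ are linearly independent, the multi-cone $\mathcal C(X_1)$ is the $G\times T_{\ad}$-orbit closure of a single vector $v=v_1+\ldots+v_s$, so that $X_\Gamma$ is realized as the $G$-orbit closure of $([v_1],\ldots,[v_s])$ in $\mathbb P(V(\lambda_1))\times\ldots\times\mathbb P(V(\lambda_s))$ (saturation enters here). One then checks that this projective variety is smooth, spherical, toroidal and has a unique closed orbit, and invokes \cite{L97} to conclude it is wonderful of rank $|\Sigma|=\dim\Hilb_{\underline\lambda}^G$. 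Finally, $\Sigma_{X_\Gamma}$ is computed not from a fan but from the definition of spherical roots as the $T$-weights of $T_zX_\Gamma/T_zY$ at the $B^-$-fixed point of the closed orbit, which the degeneration to $X_0$ identifies with the $T_{\ad}$-weights of $T_{X_0}\Hilb^G_\Gamma$, i.e.\ with $\Sigma$. If you want to salvage your fan-based variant, you would have to supply exactly this computation of $\Lambda(G/H)$ and $\Sigma_{G/H}$ for the open orbit of $X_\Gamma$ first; at that point the fan machinery becomes redundant.
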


\begin{proof}
Recall that by Proposition~\ref{converse} and Theorem~\ref{smoothness}, 
the invariant Hilbert scheme $\Hilb_{\underline\lambda}^G$ is not trivial whence the existence of $X_1$ by the proof of Corollary~\ref{affinity}.
Further the dimension of $\Hilb_{\underline\lambda}^G$ equals the cardinality of $\Sigma$.

Let $v\in V$ be such that $X_1$ is the $G$-orbit closure of $v$ within $V$.
Since the dominant weights $\lambda_i$ defining the monoid $\Gamma$ are linearly independent, 
the $G\times T_\ad$-orbit closure of $v$ within $V$ coincides with the multicone $\mathcal C(X_1)$ generated by $X_1$.

Write $v=v_1+\ldots+v_s$ with $v_i\in V(\lambda_i)$.
Since the saturation assumption is fulfilled by the set of colors $\lambda_i$, 
the $G$-variety $X_\Gamma$ coincides with the $G$-orbit closure of
$([v_1],\ldots,[v_s])$ within the multiprojective space $\mathbb P\left(V(\lambda_1)\right)\times\ldots\times\mathbb P\left(V(\lambda_s)\right)$.

The projective variety $X_\Gamma$ is thus smooth and spherical for the action of $G$ with a single closed $G$-orbit given by the multihomogeneous spectrum of the regular ring of $\overline{G.v_{\underline\lambda}}\subset V$.
Being also toroidal, the variety $X_\Gamma$ is thus a wonderful $G$-variety (\emph{see}~\cite{L97}).
Further its rank equals the cardinality of the given set $\Sigma$.
We shall prove that the spherical system $(S^p_X,\Sigma_X)$ of $X_\Gamma$ is indeed $(S^p,\Sigma)$.
This follows readily from the characterization of the closed $G$-orbit of $X_\Gamma$ and 
from Corollary~\ref{affinity} along with the definition of spherical roots recalled in the previous section.
\end{proof}

As already noticed and proved in~\cite{BC1} (\textsl{see} Corollary 2.5 in [loc.cit.]; \textsl{see} also \cite{Br2}),
we get the following. 

\begin{corollary}
Given a connected reductive algebraic group $G$ and $\Gamma$ a saturated monoid,
consider the set $\tilde\Gamma$ of colors of the spherical system $(S^p(\Gamma),\Sigma(\Gamma))$.
Let $X_{\tilde\Gamma}$ be the wonderful $G$-variety obtained in the above theorem.
The universal family of $\Hilb_{\underline\lambda}^G$ ($\underline\lambda$ defining the monoid $\Gamma$)
is given by the quotient map 
$$
\tilde X_{\tilde\Gamma}
\longrightarrow \tilde X_{\tilde\Gamma}//G
$$
where $\tilde X_{\tilde\Gamma}$ denotes the normalization of the affine multi-cone  
$$
\mathrm{Spec}\oplus_{\underline\nu\in\Gamma}H^0(X_{\tilde\Gamma},\mathcal L_{\underline\nu}).
$$
\end{corollary}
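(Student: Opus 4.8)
The statement identifies the universal family of $\Hilb_{\underline\lambda}^G$ with the $G$-quotient of the normalized affine multi-cone over the wonderful variety $X_{\tilde\Gamma}$ constructed in Theorem~\ref{classification}. The strategy is to combine the explicit description of the closed point $X_1\in\Hilb_{\underline\lambda}^G$ whose $T_\ad$-orbit is dense (obtained in the proof of Theorem~\ref{classification}) with the $T_\ad$-equivariance of the universal family. First I would recall that, by Corollary~\ref{affinity} and the proof of Theorem~\ref{classification}, $\Hilb_{\underline\lambda}^G$ is an affine space on which $T_\ad$ acts linearly with the single fixed point $X_0$ and with dense orbit through $X_1$; hence the universal family $\mathfrak X\to\Hilb_{\underline\lambda}^G$ is $G\times T_\ad$-equivariant, and its fiber over any point in the dense $T_\ad$-orbit is isomorphic (as a $G$-variety, $T_\ad$-equivariantly up to the $T_\ad$-action on the base) to $X_1\subset V$.

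\textbf{Key steps.} The plan is to carry out the following. (1) Realize $\tilde X_{\tilde\Gamma}=\mathrm{Spec}\bigoplus_{\underline\nu\in\Gamma}H^0(X_{\tilde\Gamma},\mathcal L_{\underline\nu})$ as the multi-cone $\mathcal C(X_1)$ appearing in Theorem~\ref{classification}: indeed, since $X_\Gamma=\mathrm{Proj}\,\mathcal R(\mathcal C(X_1))$, the section ring $\bigoplus_{\underline\nu}H^0(X_{\tilde\Gamma},\mathcal L_{\underline\nu})$ is, up to normalization, the multihomogeneous coordinate ring $\mathcal R(\mathcal C(X_1))$; the normalization is forced because the line bundles $\mathcal L_{\underline\nu}$ are generated by global sections on the wonderful (hence normal, projectively normal in the appropriate sense) variety $X_{\tilde\Gamma}$. (2) Observe that $\mathcal C(X_1)$ carries a $G\times T_\ad$-action, where $T_\ad$ acts through the multi-homogeneity grading by $\Gamma$, and that the GIT quotient $\tilde X_{\tilde\Gamma}/\!/G$ is therefore an affine $T_\ad$-variety. (3) Identify this quotient with $\Hilb_{\underline\lambda}^G$: the fibers of $\tilde X_{\tilde\Gamma}\to\tilde X_{\tilde\Gamma}/\!/G$ over the various $T_\ad$-orbits are precisely the $G$-orbit closures $\underline t.X_1\subset V$ for $\underline t\in\mathbb C^s$, together with the degenerate limit $X_0$, and these are exactly the closed points of $\Hilb_{\underline\lambda}^G$ parametrized by the $T_\ad$-orbits in the affine space $\Hilb_{\underline\lambda}^G$. (4) Conclude by the universal property of $\Hilb_{\underline\lambda}^G$: the flat $G$-equivariant family $\tilde X_{\tilde\Gamma}\to\tilde X_{\tilde\Gamma}/\!/G$ of affine $G$-subschemes of $V$ of type $\Gamma$ induces a morphism to $\Hilb_{\underline\lambda}^G$ which is $T_\ad$-equivariant, bijective on closed points, and between normal varieties of the same dimension, hence an isomorphism; pulling back the universal family along this isomorphism gives the asserted description.

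\textbf{Main obstacle.} The delicate point is step~(3)--(4): verifying that the family $\tilde X_{\tilde\Gamma}\to\tilde X_{\tilde\Gamma}/\!/G$ is genuinely \emph{flat of type $\Gamma$} (so that it defines a morphism to $\Hilb_\Gamma^G$ at all), and that the induced morphism to $\Hilb_{\underline\lambda}^G$ is an isomorphism rather than merely a finite or bijective morphism. Flatness and the type follow because $\bigoplus_{\underline\nu\in\Gamma}H^0(X_{\tilde\Gamma},\mathcal L_{\underline\nu})$ is, as a $G$-module, multiplicity-free with the $\Gamma$-graded pieces forming free rank-one modules over the invariant subring $\mathbb C[\tilde X_{\tilde\Gamma}/\!/G]$ — this uses that $X_{\tilde\Gamma}$ is wonderful with spherical system $(S^p,\Sigma)$ and that $\Gamma$ is saturated and spanned by the colors, so the multiplicities are governed by the combinatorics of $\Sigma$. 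For the isomorphism, one invokes Zariski's main theorem together with the already-established smoothness (Theorem~\ref{smoothness}) and connectedness of $\Hilb_{\underline\lambda}^G$: a $T_\ad$-equivariant morphism of normal affine $T_\ad$-varieties that is bijective on the (finitely many) $T_\ad$-orbits, with the target normal, is an isomorphism. This is exactly the argument indicated in~\cite{BC1} (Corollary~2.5) and~\cite{Br2}, to which the remaining routine verifications can be delegated.
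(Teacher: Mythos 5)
The paper gives no proof of this corollary: it is presented as ``already noticed and proved'' in \cite{BC1} (Corollary 2.5) and \cite{Br2}, so there is no in-text argument to compare yours against line by line. Your outline is essentially a reconstruction of the argument of those references --- pass to the normalized multi-cone over the wonderful variety, show its $G$-quotient is an affine space of dimension $|\Sigma|$ with each graded piece free of rank one over the invariant ring (whence flatness and type $\Gamma$), and show the resulting classifying morphism to $\Hilb_{\underline\lambda}^G$ is an isomorphism using Theorem~\ref{smoothness}, Corollary~\ref{affinity} and Zariski's main theorem --- so the route is the intended one.

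Two points in your write-up need repair. First, step (1) identifies $\tilde X_{\tilde\Gamma}$ with the normalization of the multi-cone $\mathcal C(X_1)$ of Theorem~\ref{classification}; but that multi-cone is graded by the color monoid $\tilde\Gamma$ and lives in the sum of the irreducible modules whose highest weights are the colors, whereas the corollary concerns an arbitrary saturated monoid $\Gamma$, and the multi-cone $\mathrm{Spec}\oplus_{\underline\nu\in\Gamma}H^0(X_{\tilde\Gamma},\mathcal L_{\underline\nu})$ is graded by $\Gamma$ and maps to $V=\oplus_i V(\lambda_i)$ with the $\lambda_i$ generating $\Gamma$. These coincide only when $\Gamma=\tilde\Gamma$; for general $\Gamma$ you must work with the $\Gamma$-graded section ring of $X_{\tilde\Gamma}$ directly (this is exactly what \cite{Br2} supplies), and your step (1) as written proves only the special case. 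Second, in your closing paragraph you assert that a $T_\ad$-equivariant morphism of normal affine $T_\ad$-varieties which is bijective on the finitely many $T_\ad$-orbits is an isomorphism; this is false in general (an equivariant isogeny on the dense orbit is orbit-bijective without being injective). What saves you is either the point-bijectivity you claim earlier --- which must then actually be verified, i.e., distinct points of $\tilde X_{\tilde\Gamma}//G$ must give distinct $G$-subschemes of $V$ --- or the observation that source and target are both affine spaces on which $T_\ad$ acts linearly with the same multiplicity-free set of weights $\Sigma$, and since the spherical roots are linearly independent, any dominant equivariant morphism between them is forced to be a linear isomorphism. Either way this step deserves an explicit sentence rather than an appeal to a general principle that does not hold.
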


Here $\mathcal L_\chi$ denotes the $G$-linearized invertible sheaf associated to the character $\chi$.

As a consequence of the two above statements, we can answer positively to Luna's conjecture in the context of strict wonderful varieties.

\begin{corollary}
Given a spherical system $(S^p,\Sigma)$ of some reductive algebraic group $G$,
there exists a unique wonderful $G$-variety whose spherical system is $(S^p,\Sigma)$.
\end{corollary}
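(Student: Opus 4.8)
The plan is to deduce existence and uniqueness separately, reducing the general case to the primitive one treated in Theorem~\ref{classification}. For existence, given an arbitrary spherical system $(S^p,\Sigma)$ of $G$, one first enlarges $\Sigma$ to a primitive spherical system. Concretely, one passes to the localisation: the support $S' := \supp\Sigma$ is a subset of the simple roots, hence corresponds to a Levi subgroup (or a product of semisimple factors) $G'$ of $G$, and $(S^p\cap S',\Sigma)$ is a spherical system of $G'$ whose root support is all of $S'$. Among the spherical systems of $G'$ with support $S'$ containing $\Sigma$ there is at least one maximal such, which is by definition primitive; enlarging in this way does not affect the eventual construction because, as in Luna's formalism, a wonderful $G$-variety with spherical system $(S^p,\Sigma)$ is obtained from one over the appropriate reductive subgroup by the standard parabolic induction (fibre product $G\times_Q X'$ with a suitable parabolic $Q$). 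Thus it suffices to produce a wonderful variety for every \emph{primitive} spherical system, and this is exactly the content of Theorem~\ref{classification}: take $\Gamma$ spanned by the set of colors of the primitive system, form $\Hilb_{\underline\lambda}^G$, pick a closed point $X_1$ with dense $T_{\ad}$-orbit (which exists by Corollary~\ref{affinity} together with Proposition~\ref{converse}), and set $X_\Gamma = \mathrm{Proj}\,\mathcal R(\mathcal C(X_1))$. Theorem~\ref{classification} asserts that $X_\Gamma$ is wonderful with spherical system $(S^p,\Sigma)$.

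For uniqueness, suppose $X$ and $X'$ are two wonderful $G$-varieties with the same spherical system $(S^p,\Sigma)$. Using the same localisation/parabolic-induction reduction, one may assume $\supp\Sigma = S$. The key observation is that the set of colors $\Delta$ of $X$ is determined by $(S^p,\Sigma)$ by the explicit combinatorial recipe recalled in Section~\ref{wonderful}, and these colors furnish linearly independent dominant weights $\lambda_1,\dots,\lambda_s$ spanning a saturated monoid $\Gamma$; moreover the global sections $\bigoplus_{\underline\nu\in\Gamma} H^0(X,\mathcal L_{\underline\nu})$ realise (the normalisation of) the total coordinate ring of $X$, so that $X$ is recovered as $\mathrm{Proj}$ of this ring. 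Now the affine multi-cone attached to $X$, after taking $G$-invariants in the appropriate graded pieces, defines a closed point of $\Hilb_{\underline\lambda}^G$ whose $T_{\ad}$-orbit must be dense: indeed a wonderful variety is non-degenerate and has no $B$-stable, $G$-stable divisor distinct from the boundary, which forces the corresponding point to lie in the open $T_{\ad}$-orbit of the (irreducible, toric) variety $\Hilb_{\underline\lambda}^G$. The same applies to $X'$. Since $\Hilb_{\underline\lambda}^G$ has a \emph{unique} dense $T_{\ad}$-orbit, the two points differ only by the $T_{\ad}$-action, hence the associated multi-cones $\mathcal C(X_1)$ and $\mathcal C(X_1')$ coincide, and therefore $X\cong X'$.

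The main obstacle is the uniqueness half, and within it the step of showing that every wonderful $G$-variety with spherical system $(S^p,\Sigma)$ arises as $\mathrm{Proj}$ of the total coordinate ring of a point lying in the \emph{dense} $T_{\ad}$-orbit of $\Hilb_{\underline\lambda}^G$. This requires identifying the colors of $X$ with the prescribed generators $\lambda_i$ of $\Gamma$ (so that $X$ does embed into $\prod_i\mathbb P(V(\lambda_i))$ via its colors), and then verifying that the resulting point is not contained in any smaller $T_{\ad}$-orbit — equivalently, that its stabiliser in $G$ is exactly the generic isotropy group $H$ with $BH$ open, with no further degeneration. Granting the characterisation of colors from Section~\ref{wonderful} and the toric structure of $\Hilb_{\underline\lambda}^G$ from Corollary~\ref{affinity}, this reduces to a dimension count: the rank of $X$ equals $|\Sigma| = \dim\Hilb_{\underline\lambda}^G$, which pins the point down to the open orbit. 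Assembling these pieces, existence from Theorem~\ref{classification} and uniqueness from the rigidity just described yield the corollary.
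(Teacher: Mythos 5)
Your overall architecture matches the paper's: reduce to primitive spherical systems via Luna's theory, obtain existence from Theorem~\ref{classification}, and obtain uniqueness by identifying any wonderful variety with the given system with the $\mathrm{Proj}$ of the multi-cone over a point of the dense $T_{\ad}$-orbit of $\Hilb_{\underline\lambda}^G$. The paper does exactly this, citing \cite{L01} for the reduction and the preceding corollary on the universal family (following Section 6.2 of \cite{BC2}) for uniqueness. However, your explicit account of the reduction step contains a genuine error. You propose to \emph{enlarge} $\Sigma$ to a maximal spherical system $(S^p,\Sigma')$ and claim that this is harmless because of parabolic induction. This conflates two different operations: parabolic induction (and localization to $\supp\Sigma\cup S^p$) handles the case where the support of $\Sigma$ is a proper subset of $S$; it does not relate $(S^p,\Sigma)$ to $(S^p,\Sigma')$ with $\Sigma'\supsetneq\Sigma$. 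A wonderful variety with spherical system $(S^p,\Sigma')$ is not a wonderful variety with system $(S^p,\Sigma)$; to pass from the former to the latter one must take a suitable intersection of boundary divisors (a $G$-orbit closure), and Luna's reduction to primitive systems in fact involves further operations (quotients by distinguished subsets of colors, fiber products) that your sketch omits. The paper hides all of this behind the citation of \cite{L01}; since you chose to spell it out, the step as written would fail.

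On uniqueness your argument is the right one in spirit: both you and the paper ultimately rest on the fact that any wonderful $G$-variety with spherical system $(S^p,\Sigma)$ yields a point of the \emph{dense} $T_{\ad}$-orbit of the affine space $\Hilb_{\underline\lambda}^G$, so that any two such varieties differ by the $T_{\ad}$-action and hence have isomorphic $\mathrm{Proj}$. But the pivotal claim -- that the point attached to $X$ is non-degenerate and lies in the open orbit -- is asserted rather than proved. The dimension count $\mathrm{rk}\,X=|\Sigma|=\dim\Hilb_{\underline\lambda}^G$ only pins the point to the open orbit once one knows that the coordinates of the point in the $T_{\ad}$-eigenbasis of weights $\Sigma$ vanish exactly on the spherical roots that degenerate; this is precisely what the preceding corollary on the universal family (and \cite[Section 6.2]{BC2}, which the paper invokes) supplies, and the paper further remarks that uniqueness in full generality is due to \cite{Lo}. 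You correctly flag this as the main obstacle, but it remains an unclosed gap in your write-up.
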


\begin{proof}
By~\cite{L01}, it suffices to consider primitive spherical systems.
The existence part thus follows from Theorem~\ref{affinity}; the uniqueness is a consequence of the corollary above as already
noticed in Section 6.2 of~\cite{BC2}.
\end{proof}

\begin{remark}
The existence part  of the above theorem is proved in~\cite{BC2} by different methods; the proof follows Luna's approach introduced in ~\cite{L01}.
This proof is Lie theoretical.
More specifically, for a given spherical system of $G$, a subgroup of $G$ is exhibited. 
One thus proves that this subgroup is wonderful (it has a compactification which is a wonderful variety)
and by ad-hoc arguments that it has indeed the spherical system under consideration.
\textsl{See}~\cite{BP} and \cite{B} for other partial positive answers to Luna's conjecture.

The uniqueness part (in full generality) of this theorem follows from~\cite{Lo}. The approach developed here uses the so-called colored fans introduced in \cite{LV}.
\end{remark}

\section{Proofs}\label{Proofs}

\subsection{Auxiliary lemmas}

For convenience, we shall recall the following statements from~\cite{BC1}.

\begin{lemma}(\cite[Proposition 3.4]{BC1})\label{rootsupport}
Let $\gamma$ be a $T_\ad$-weight vector of $(V/\mathfrak g.v_{\underline{\lambda}})^{G_{v_{\underline{\lambda}}}}$.
If $\delta$ is a simple root in the support of $\gamma$ such that $\gamma-\delta$ is not a root then $(\gamma,\delta)\geq 0$.
Further if $\gamma$ and $\delta$ are orthogonal then $\delta$ is orthogonal to all the $\lambda_i$'s.
\end{lemma}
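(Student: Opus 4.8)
The plan is to unwind the objects involved and reduce the statement to a computation with the $\mathfrak{sl}_2$-triple attached to $\delta$ inside the irreducible constituents $V(\lambda_i)$. First one records the structure of $\mathfrak g_{v_{\underline\lambda}}$ and of $\mathfrak g\cdot v_{\underline\lambda}$: since the $V(\lambda_i)$ are in direct sum, $\mathfrak g_{v_{\underline\lambda}}=\bigcap_i\mathfrak g_{v_{\lambda_i}}$, and because each $v_{\lambda_i}$ is a highest weight vector one has $\mathfrak n^+\subset\mathfrak g_{v_{\lambda_i}}$, $\mathfrak t\cap\mathfrak g_{v_{\lambda_i}}=\ker\lambda_i$, and $\mathfrak g_{-\beta}\subset\mathfrak g_{v_{\lambda_i}}$ exactly for the positive roots $\beta$ with $(\lambda_i,\beta^\vee)=0$ (the standard fact $X_{-\beta}v_\lambda\neq 0\iff(\lambda,\beta^\vee)>0$, applied to the $\mathfrak{sl}_2$-triple of $\beta$). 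In particular $\mathfrak n^+\subset\mathfrak g_{v_{\underline\lambda}}$, while $\mathfrak g\cdot v_{\underline\lambda}=\bigoplus_i\mathfrak g\cdot v_{\lambda_i}$ is spanned by the $T_\ad$-homogeneous vectors $v_{\lambda_i}$ (of weight $0$) and $X_{-\beta}v_{\lambda_i}$ (of weight $\beta\in\Phi^+$), so every $T_\ad$-weight of $\mathfrak g\cdot v_{\underline\lambda}$ lies in $\{0\}\cup\Phi^+$.

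The key reduction is then immediate. Lifting $v_\gamma$ to a vector of $V$ of $T_\ad$-weight $\gamma$, $G_{v_{\underline\lambda}}$-invariance of its class gives $\mathfrak g_{v_{\underline\lambda}}\cdot v_\gamma\subseteq\mathfrak g\cdot v_{\underline\lambda}$. For a simple root $\alpha\in\supp\gamma$ with $\alpha\neq\gamma$, the vector $X_\alpha v_\gamma$ (which lies in $\mathfrak g\cdot v_{\underline\lambda}$, as $X_\alpha\in\mathfrak n^+$) has $T_\ad$-weight $\gamma-\alpha$, which has nonnegative coordinates and is nonzero; if moreover $\gamma-\alpha\notin\Phi$, then $\gamma-\alpha\notin\{0\}\cup\Phi^+$ and therefore $X_\alpha v_\gamma=0$ in $V$ (the same argument gives $X_\beta v_\gamma=0$ for every positive root $\beta$ with $\gamma-\beta\notin\Phi^+\cup\{0\}$). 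Applying this to $\alpha=\delta$ — the case $\gamma=\delta$ being trivial — and writing $v_\gamma=\sum_i v_\gamma^{(i)}$ with $v_\gamma^{(i)}\in V(\lambda_i)_{\lambda_i-\gamma}$, each nonzero $v_\gamma^{(i)}$ is annihilated by $X_\delta$, i.e. is a highest weight vector for $\langle X_\delta,H_\delta,X_{-\delta}\rangle$, whence $(\lambda_i,\delta^\vee)-(\gamma,\delta^\vee)\geq 0$.

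Now one splits according to whether $\delta$ is orthogonal to all the $\lambda_i$. If it is — equivalently $\delta\in S(\Gamma)$ — then $X_{-\delta}\in\mathfrak g_{v_{\underline\lambda}}$ as well, so the whole triple lies in $\mathfrak g_{v_{\underline\lambda}}$; since it annihilates each $v_{\lambda_i}$ it stabilises $\mathfrak g\cdot v_{\underline\lambda}$, so the nonzero class $v_\gamma$ is $\langle X_\delta,H_\delta,X_{-\delta}\rangle$-invariant in $V/\mathfrak g\cdot v_{\underline\lambda}$ and its $H_\delta$-weight, namely $-(\gamma,\delta^\vee)$, must vanish: $(\gamma,\delta^\vee)=0$. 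This settles the inequality in that case and reduces the final assertion to the implication "$(\lambda_i,\delta^\vee)>0$ for some $i$ $\Rightarrow$ $(\gamma,\delta^\vee)\neq 0$". For the remaining case — $\delta$ not orthogonal to all the $\lambda_i$ — I would use the annihilation statement above in its full strength ($v_\gamma$ is killed by $X_\alpha$ for every simple $\alpha\notin S(\gamma)$, and by many non-simple positive root vectors too), combine it with the constraints imposed by the negative root vectors $X_{-\beta}$ ($\beta$ a positive root orthogonal to $\Gamma$) and by $\ker\Gamma\cap\mathfrak t$ inside $\mathfrak g_{v_{\underline\lambda}}$, and run a refined analysis of the $\mathfrak{sl}_2(\delta)$-strings through the weights $\lambda_i-\gamma$ in the constituents $V(\lambda_i)$ that actually contribute to $v_\gamma$; the aim is to show that the highest weight $(\lambda_i,\delta^\vee)-(\gamma,\delta^\vee)$ of the string of $v_\gamma^{(i)}$ cannot exceed $(\lambda_i,\delta^\vee)$ — that is $(\gamma,\delta^\vee)\geq 0$ — and is in fact $\leq(\lambda_i,\delta^\vee)-1$ when $(\lambda_i,\delta^\vee)>0$, the strictness then yielding the orthogonality assertion by contraposition.

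I expect this last case to be the main obstacle. Two things make it delicate: several components $v_\gamma^{(i)}$, with different values $(\lambda_i,\delta^\vee)$, may be nonzero at once, so one cannot reduce cleanly to a single irreducible factor but must keep track of which components survive each $\mathfrak{sl}_2$-reduction; and the statement genuinely needs the invariance, since combinatorics alone fails — there are roots $\gamma$ (for example $3\alpha_1+\alpha_2$ in type $\mathsf G_2$, with $\alpha_1$ short) having $\delta\in\supp\gamma$, $\gamma-\delta\notin\Phi$ and $(\gamma,\delta^\vee)<0$, and the content of the lemma is precisely that $G_{v_{\underline\lambda}}$-invariance rules such $\gamma$ out.
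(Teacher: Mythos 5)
First, for calibration: the paper itself offers no proof of this lemma --- it is imported verbatim from \cite[Proposition 3.4]{BC1} --- so there is nothing in-paper to compare against line by line. That said, your opening reduction is exactly the move the paper makes in the analogous arguments (the proofs of Proposition~\ref{supportofGamma} and of Lemma~\ref{ruleout}): from $G_{v_{\underline\lambda}}$-invariance of the class one gets $X_\delta v_\gamma\in\mathfrak g.v_{\underline\lambda}$; since every nonzero $T_\ad$-weight of $\mathfrak g.v_{\underline\lambda}$ is a positive root while $\gamma-\delta$ is neither $0$ nor a root, this forces $X_\delta v_\gamma=0$ in $V$, so each component $v_\gamma^{(i)}$ is a highest weight vector for the $\mathfrak{sl}_2$-triple of $\delta$. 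Your treatment of the case where $\delta$ is orthogonal to all the $\lambda_i$ is also correct and complete: the whole triple then lies in $\mathfrak g_{v_{\underline\lambda}}$, so the nonzero invariant class has $H_\delta$-eigenvalue $-(\gamma,\delta^\vee)=0$.

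The gap is that the other case --- $(\lambda_i,\delta^\vee)>0$ for some $i$ --- is where the entire content of the lemma sits (both the inequality $(\gamma,\delta^\vee)\ge 0$ and the strictness needed for the second assertion), and there you give only a statement of intent (``I would use\dots'', ``the aim is to show\dots''), not an argument. The $\mathfrak{sl}_2(\delta)$-string computation you do carry out yields $(\gamma,\delta^\vee)\le(\lambda_i,\delta^\vee)$ for each contributing component, which is an upper bound and hence in the wrong direction; nothing in the proposal produces the needed lower bound $(\gamma,\delta^\vee)>0$. Your own $\mathsf G_2$ example ($\gamma=3\alpha_1+\alpha_2$, $\delta=\alpha_2$) shows you understand that root combinatorics alone cannot close this, but it equally shows the proof is missing its essential input. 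A related slip is likely to bite precisely there: $\mathfrak g.v_{\underline\lambda}$ is the \emph{diagonal} image $\{\sum_i X v_{\lambda_i}\}$, not $\oplus_i\mathfrak g.v_{\lambda_i}$; its $T_\ad$-weight-$\beta$ component is the single line $\mathbb C\,X_{-\beta}v_{\underline\lambda}$, not $\oplus_i\mathbb C\,X_{-\beta}v_{\lambda_i}$. This is harmless for the weight-set argument you use it for, but the sharper statement --- membership in $\mathfrak g.v_{\underline\lambda}$ forces $X_\beta v_\gamma$ to be proportional to $X_{-\gamma+\beta}v_{\underline\lambda}$, which ties the components $v_\gamma^{(i)}$ in the various $V(\lambda_i)$ to one another --- is exactly the leverage the unfinished case requires (compare how Lemma~\ref{ruleout} exploits ``$X_\beta.v_\gamma\in\mathfrak g.v_{\underline\lambda}$'' in this finer form). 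As it stands, the proposal establishes the easy half and an honest plan for the hard half, but not the lemma.
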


\begin{remark}
We will generalize the above statement in Proposition~\ref{supportofGamma}.
\end{remark}

\begin{lemma}(\cite[Proof of Theorem~3.10]{BC1})\label{weightvector}
If $[v]$ is a $T_\ad$-weight vector of $(V/\mathfrak g.v_{\underline{\lambda}})^{G_{v_{\underline{\lambda}}}}$,
then one of its representatives $v\in V$ can be taken as follows
\smallbreak
\noindent
$$[v]\in (V(\lambda)/\mathfrak g.v_{\lambda})^{G_{v_{{\lambda}}}}\quad\mbox{ or }\quad v=X_{-\gamma}v_\lambda 
$$
where $\lambda$ is one of the given dominant weights $\lambda_i$.
The second case occurs only when $(V(\lambda)/\mathfrak g.v_{\lambda})^{G_{v_{{\lambda}}}}$ is trivial.
\end{lemma}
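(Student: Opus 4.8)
The plan is to compare the invariant space $(V/\mathfrak g.v_{\underline\lambda})^{G_{v_{\underline\lambda}}}$ with the intrinsic invariants of the individual irreducible summands $V(\lambda_i)$, weight by weight, and to use the multiplicity-freeness of Theorem~\ref{tgtspaceasmodule} together with Lemma~\ref{rootsupport} to force a representative into a single summand.

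First I would fix a $T$-weight representative of the given $T_\ad$-weight vector: since $T_\ad$ acts through $T$, we may write $v=v_1+\cdots+v_s$ with $v_i\in V(\lambda_i)_{\lambda_i-\gamma}$, where $\gamma$ denotes the $T_\ad$-weight of $[v]$. Here one records the elementary facts that $G_{v_{\underline\lambda}}=\bigcap_i G_{v_{\lambda_i}}$ (the $v_{\lambda_i}$ sit in distinct $G$-submodules of $V$), that $G$, and hence $G_{v_{\underline\lambda}}$, preserves each $V(\lambda_i)$, and that $\mathfrak g.v_{\underline\lambda}$ is the image of the \emph{diagonal} map $\mathfrak g\to\bigoplus_iV(\lambda_i)$, $X\mapsto(X.v_{\lambda_i})_i$, with kernel $\mathfrak g_{v_{\underline\lambda}}$.

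Next I would exploit the $G_{v_{\underline\lambda}}$-equivariant exact sequence obtained by collapsing each factor,
$$
0\longrightarrow N\longrightarrow V/\mathfrak g.v_{\underline\lambda}\longrightarrow\bigoplus_iV(\lambda_i)/\mathfrak g.v_{\lambda_i}\longrightarrow 0,\qquad N:=\Big(\bigoplus_i\mathfrak g.v_{\lambda_i}\Big)\Big/\mathfrak g.v_{\underline\lambda},
$$
in which $N$ is the cokernel of the diagonal map $\mathfrak g\to\bigoplus_i\mathfrak g/\mathfrak g_{v_{\lambda_i}}$. Passing to $G_{v_{\underline\lambda}}$-invariants yields the left-exact sequence
$$
0\longrightarrow N^{G_{v_{\underline\lambda}}}\longrightarrow(V/\mathfrak g.v_{\underline\lambda})^{G_{v_{\underline\lambda}}}\xrightarrow{\ p\ }\bigoplus_i\big(V(\lambda_i)/\mathfrak g.v_{\lambda_i}\big)^{G_{v_{\underline\lambda}}},
$$
so that either $p([v])\neq0$ or $[v]\in N^{G_{v_{\underline\lambda}}}$. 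In the first case I would show that only one component $p([v])_{i_0}$ survives: two nonzero components would, after a support analysis in the spirit of Lemma~\ref{rootsupport} of $\gamma$ viewed in the two factors together with the single-weight description of \cite{Js}, produce two independent $T_\ad$-weight vectors of weight $\gamma$ in $(V/\mathfrak g.v_{\underline\lambda})^{G_{v_{\underline\lambda}}}$, contradicting multiplicity-freeness; modifying $v$ by an element of $\mathfrak g.v_{\underline\lambda}$ then leaves a representative supported on $V(\lambda_{i_0})$ which one checks lies in $(V(\lambda_{i_0})/\mathfrak g.v_{\lambda_{i_0}})^{G_{v_{\lambda_{i_0}}}}$, the first alternative with $\lambda=\lambda_{i_0}$. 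In the second case each $v_i$ lies in $\mathfrak g.v_{\lambda_i}$ and has weight $\lambda_i-\gamma$, so $v_i\in\mathfrak g_{-\gamma}.v_{\lambda_i}$; in particular $\gamma$ must be a root (otherwise $\mathfrak g_{-\gamma}=0$ and $[v]=0$), so that $X_{-\gamma}v_{\lambda}$ indeed makes sense. Analysing $N$ as a $\mathfrak g_{v_{\underline\lambda}}$-module and using the diagonal defect, one again reduces to a single nonzero component $v=X_{-\gamma}v_{\lambda_{i_0}}$; and since a nonzero intrinsic invariant of $V(\lambda_{i_0})$ would, by multiplicity-freeness, already exhaust the weight-$\gamma$ line and leave no room for a class coming from $N$, the occurrence of the second case forces $(V(\lambda_{i_0})/\mathfrak g.v_{\lambda_{i_0}})^{G_{v_{\lambda_{i_0}}}}=0$.

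The hard part will be the single-summand reduction in both cases: because $\mathfrak g.v_{\underline\lambda}$ is diagonal rather than a direct sum, $G_{v_{\underline\lambda}}$-invariance of a class does not descend componentwise for free, and promoting $G_{v_{\underline\lambda}}$-invariance of the single-component representative to genuine $G_{v_{\lambda_{i_0}}}$-invariance is delicate. Controlling these mixed contributions is precisely where the multiplicity-freeness of Theorem~\ref{tgtspaceasmodule} and the root-support restrictions of Lemma~\ref{rootsupport}, together with the $s=1$ analysis of \cite{Js}, have to be combined.
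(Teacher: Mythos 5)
The paper does not actually prove this lemma: it is recalled verbatim from \cite[proof of Theorem~3.10]{BC1} as an auxiliary statement, so there is no in-paper proof to compare your argument against. Judged on its own terms, your setup is sound --- the identities $G_{v_{\underline\lambda}}=\bigcap_iG_{v_{\lambda_i}}$, the description of $\mathfrak g.v_{\underline\lambda}$ as the diagonal image, and the exact sequence $0\to N\to V/\mathfrak g.v_{\underline\lambda}\to\bigoplus_iV(\lambda_i)/\mathfrak g.v_{\lambda_i}\to 0$ are all correct, and the two-case split via left-exactness of invariants is a natural skeleton. But the argument has a genuine gap at exactly the two places you label ``the hard part'', and those are the entire content of the lemma. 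The multiplicity-freeness contradiction in Case 1 does not work as stated: if $[v]=[v_1+\cdots+v_s]$ is $G_{v_{\underline\lambda}}$-invariant, then $g.v_i-v_i=X_g.v_{\lambda_i}$ for a single $X_g\in\mathfrak g$ common to all $i$, and the truncated class $[v_i]\in V/\mathfrak g.v_{\underline\lambda}$ is invariant only if $X_g$ kills the other $v_{\lambda_j}$ --- which is not automatic, precisely because $\mathfrak g.v_{\underline\lambda}$ is diagonal and not a direct sum. So two nonzero components of $p([v])$ do not yield two independent weight vectors in $(V/\mathfrak g.v_{\underline\lambda})^{G_{v_{\underline\lambda}}}$, and no contradiction with Theorem~\ref{tgtspaceasmodule} is obtained.

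The same issue recurs in Case 2: for $s\geq 3$ the weight-$\gamma$ part of $N$ can be $(s-1)$-dimensional before imposing invariance, and reducing an invariant class to a single term $X_{-\gamma}v_\lambda$ requires an actual computation with the condition $X_\delta.v\in\mathfrak g.v_{\underline\lambda}$ for root vectors $X_\delta\in\mathfrak g_{v_{\underline\lambda}}$ (this is where Lemma~\ref{rootsupport} and the $s=1$ lists of \cite{Js} must really be deployed, not merely invoked). Likewise, the upgrade from $G_{v_{\underline\lambda}}$-invariance to $G_{v_{\lambda_{i_0}}}$-invariance in the first alternative is asserted, not proved. Two further cautions: you should handle the weight $\gamma=0$ separately in Case 2 (there $v_i\in\mathfrak t.v_{\lambda_i}$ and linear independence of the $\lambda_i$ kills the class), and you should check that Theorem~\ref{tgtspaceasmodule} is logically prior to this lemma in \cite{BC1} before using multiplicity-freeness --- the lemma is extracted from the very proof that establishes the weight table, so there is a risk of circularity.
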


\begin{lemma}(\cite[Lemma 3.13]{BC1})\label{f4}
If $G$ is of type $\mathsf F_4$, 
$\lambda_1=\omega_4+a\omega_3$ and $\lambda_i=a_i\omega_3$ for $a_i>0$ for some $i$, then the space 
$(V/\mathfrak g.v_{\underline{\lambda}})^{G_{v_{\underline{\lambda}}}}$
is trivial.
\end{lemma}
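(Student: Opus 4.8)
\textbf{Proof strategy for Lemma~\ref{f4}.}
The plan is to reduce the vanishing of $(V/\mathfrak g.v_{\underline\lambda})^{G_{v_{\underline\lambda}}}$ to a computation inside the $\mathsf F_4$ root system, exploiting that the stabilizer $G_{v_{\underline\lambda}}$ is large enough to kill all candidate invariant vectors. First I would describe $G_{v_{\underline\lambda}}$ explicitly: since $\lambda_1=\omega_4+a\omega_3$ and the remaining weights are multiples of $\omega_3$, the parabolic $P_{\underline\lambda}$ is the one associated with the simple roots $\{\alpha_1,\alpha_2\}$ orthogonal to nothing new, and $G_{v_{\underline\lambda}}$ contains the derived group of the Levi together with a large unipotent part; crucially the simple root $\alpha_3$ and $\alpha_4$ are ``moved''. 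The key point is that any $T_\ad$-weight of $(V/\mathfrak g.v_{\underline\lambda})^{G_{v_{\underline\lambda}}}$ must be a spherical root of $\mathsf F_4$ by Theorem~\ref{tgtspaceasmodule}, hence by Table~\ref{sphericalroots} the only candidate with full support is $\gamma=\alpha_1+2\alpha_2+3\alpha_3+2\alpha_4$ (smaller-support spherical roots are excluded because a weight of the tangent space whose support misses a simple root $\delta$ would force, by Lemma~\ref{rootsupport}, that $\delta$ be orthogonal to all $\lambda_i$, which fails here for $\delta\in\{\alpha_3,\alpha_4\}$).

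Next I would rule out this last candidate $\gamma=\alpha_1+2\alpha_2+3\alpha_3+2\alpha_4$. By Lemma~\ref{weightvector}, a representative $v$ of such a weight vector lies either in $(V(\lambda_i)/\mathfrak g.v_{\lambda_i})^{G_{v_{\lambda_i}}}$ for a single $i$, or has the form $v=X_{-\gamma}v_{\lambda_i}$, the latter only when the former space is trivial. For $\lambda_i=a_i\omega_3$ with $a_i>0$, one checks that $\lambda_i-\gamma$ is not a weight of $V(a_i\omega_3)$ unless $a_i=1$, and even for $\omega_3$ the weight $\omega_3-\gamma$ appears with the wrong behaviour under the isotropy algebra; more importantly $(V(a_i\omega_3)/\mathfrak g.v_{a_i\omega_3})^{G_{v_{a_i\omega_3}}}$ is a standard single-weight computation yielding at most the spherical root $\alpha_1+2\alpha_2+3\alpha_3+2\alpha_4$ only in borderline cases that the hypotheses $a>0$, $a_i>0$ exclude. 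For $\lambda_1=\omega_4+a\omega_3$ the analogous single-cone space is trivial (this is the $s=1$ content of~\cite{Js} combined with the structure of $G_{v_{\lambda_1}}$, whose unipotent radical already acts without invariants on the relevant weight line). It then remains to check that no ``mixed'' invariant survives, i.e.\ that a vector $v=v_\gamma$ with components in several $V(\lambda_i)$ cannot be $G_{v_{\underline\lambda}}$-invariant modulo $\mathfrak g.v_{\underline\lambda}$: this is where one uses that $G_{v_{\underline\lambda}}=\bigcap_i G_{v_{\lambda_i}}$ is strictly smaller, and in particular contains a one-parameter unipotent subgroup (attached to $-\alpha_3$ or $-\alpha_4$) whose action on the $\gamma$-weight line is nontrivial because $a,a_i>0$ make the corresponding structure constants nonzero.

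The main obstacle I anticipate is the last step: controlling the action of the unipotent radical of $G_{v_{\underline\lambda}}$ on the weight vector $v_\gamma$ and showing it cannot be corrected by an element of $\mathfrak g.v_{\underline\lambda}$. Concretely, $v_\gamma$ is only defined modulo $\mathfrak g.v_{\underline\lambda}$, so one must show that the obstruction cocycle $X_{-\alpha}\mapsto X_{-\alpha-\gamma}v_{\lambda_i}$-type expression (in the spirit of Proposition~\ref{characterisationofH^1}) is a genuine coboundary precisely when a single cone is used but fails to be invariant in the multi-cone situation with $a,a_i>0$. I would handle this by writing out the $\mathfrak{sl}_2$-strings through $\gamma$ for the roots $\alpha_3,\alpha_4$ in $V(\lambda_1)$ versus $V(a_i\omega_3)$ and comparing the resulting linear system; the nonvanishing of a single $2\times 2$ determinant built from the coefficients $a$ and $a_i$ forces the only solution to be zero. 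All other bookkeeping (identifying $P_{\underline\lambda}$, listing weights of $V(\omega_4)$ and $V(\omega_3)$ near the highest weight, and invoking Theorem~\ref{tgtspaceasmodule} to bound the possible $T_\ad$-weights) is routine.
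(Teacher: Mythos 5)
The paper does not actually prove this lemma: it is quoted from \cite[Lemma 3.13]{BC1}, and the remark that follows it only observes that the proof given there remains valid when $a=0$. So there is no in-paper argument to match yours against; your proposal has to stand on its own, and as written it has several concrete gaps. First, your reduction to the single full-support candidate $\gamma=\alpha_1+2\alpha_2+3\alpha_3+2\alpha_4$ rests on a misreading of Lemma~\ref{rootsupport}: that lemma constrains simple roots $\delta$ \emph{lying in} the support of $\gamma$ with $\gamma-\delta$ not a root, and says nothing about simple roots missing from the support. The exclusion of the other spherical roots of the $\mathsf F_4$ root system (e.g.\ those supported on $\{\alpha_2,\alpha_3,\alpha_4\}$ or $\{\alpha_3,\alpha_4\}$, whose supports do meet $\alpha_3$ and $\alpha_4$) has to be done by checking, via Lemma~\ref{weightvector} and Jansou's rank-one list, that no representative exists; it does not follow from the lemma you cite. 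Second, your claim that $\lambda_i-\gamma$ is not a weight of $V(a_i\omega_3)$ unless $a_i=1$ is false: $\gamma$ is the highest short root of $\mathsf F_4$, $(\omega_3,\gamma^\vee)=3$, so $X_{-\gamma}v_{a_i\omega_3}\neq0$ for every $a_i\geq1$. This vector is precisely the candidate representative (the second alternative of Lemma~\ref{weightvector}) that the proof must dispose of, so this step cannot be waved away.

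Third, and most seriously, you invoke the hypothesis $a>0$ at the decisive moments (``borderline cases that the hypotheses $a>0$, $a_i>0$ exclude'', ``$a,a_i>0$ make the corresponding structure constants nonzero''), whereas the paper's remark explicitly drops the requirement $a\neq0$. The case $a=0$, i.e.\ $\lambda_1=\omega_4$, is the delicate one: there $(V(\omega_4)/\mathfrak g.v_{\omega_4})^{G_{v_{\omega_4}}}$ is \emph{nontrivial} (this is one of Jansou's rank-one cases), and the entire content of the lemma is that adjoining a weight $a_i\omega_3$ destroys this invariant. Your sketch asserts the opposite (``the analogous single-cone space is trivial'') and so misses the heart of the matter. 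Finally, the computation you correctly identify as the crux --- showing that the would-be invariant, whether of the form $X_{-\gamma}v_{\lambda_j}$ or coming from the $\omega_4$-cone, fails to be fixed by $G_{v_{\underline\lambda}}$ modulo the larger space $\mathfrak g.v_{\underline\lambda}$ --- is only announced (``a $2\times2$ determinant''), not carried out. Since that computation \emph{is} the proof, the proposal as it stands is a plan rather than an argument; to complete it you would need to reproduce the explicit weight-string analysis of \cite[Lemma 3.13]{BC1} and verify it survives $a=0$.
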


\begin{remark}
The above lemma is basically the same as Lemma 3.13 in~\cite{BC1}. Here we have not required that $a$ be non-zero.
The proof conducted there is however still valid.
\end{remark}

\bigbreak

\subsection{\textbf{Proof of Proposition~\ref{characterisationofH^1}}}

Let 
$$
V=V(\lambda_1)\oplus\ldots\oplus V(\lambda_s)
$$
where the dominant weights $\lambda_i$ satisfy the properties of Proposition~\ref{non-rigidity}.

Let $\dot{\varphi}$ be a $T_\ad$-weight vector of $H^1(\mathfrak g_{v_{\underline\lambda}}, V/\mathfrak g.v_{\underline\lambda})^{G_ {v_{\underline\lambda}}/G_ {v_{\underline\lambda}}^\circ}$.
We shall prove that $\varphi$ can be represented by some cocyle $\varphi=\varphi_{\alpha,\gamma}$ as follows.
Let $\alpha$ be a simple root of the isotropy Lie algebra $\mathfrak g_{v_{\underline\lambda}}$ and
$\gamma$ be either a $T_\ad$-weight in $\Sigma(\Gamma)$ or one of the given dominant weights $\lambda_i$.
If $\gamma\in\Sigma(\Gamma)$, let 
$[v_\gamma]\in \left(V/\mathfrak g.v_{\underline\lambda}\right)^{G_{v_{\underline\lambda}}}$ be the corresponding $T_\ad$-weight vector
otherwise take $v_\gamma$ to be the highest weight vector $\lambda_i$.
Then define
\begin{eqnarray*}
\varphi_{\alpha,\gamma}: X_\alpha  &\longmapsto&  [X_{-\alpha}^r v_{\gamma}]  \\
                         X_\delta  &\longmapsto&   0 													\quad\mbox{ if $\delta\neq \alpha, \delta\in S$}  
\end{eqnarray*}
where 
$$
r=\mathrm{max}\{i\geq 0: X_{-\alpha}^i v_\gamma\neq 0 \mbox{ in } V\}
\quad\mbox{ and }\quad 
v_\gamma\in V(\lambda_j)_{\lambda_j-\gamma}\mbox{ for some $\lambda_j$}.
$$

By Kostant's theorem (\textsl{see}~\cite{K61} and also \cite{K02}, Chap. III-2), whenever $\delta\in S^p(\Gamma)$, namely $\delta$ is orthogonal to every dominant weight $\lambda_i$ then $\dot{\varphi}(X_\alpha)=0$.
Hence in the definition of $\varphi_{\alpha,\gamma}$ above, the simple root $\alpha$ does not belong to $S^p(\Gamma)$.

\begin{lemma}
Let $[v_\gamma]\in \left(V/\mathfrak g.v_{\underline\lambda}\right)^{G_{v_{\underline\lambda}}}$ be a $T_\ad$-weight vector
and
take $v_\gamma\in V(\lambda_j)_{\lambda_j-\gamma}$ for some $\lambda_j$ to be one of its representatives.
Suppose that $\varphi_{\alpha,\gamma}$ defines a cocyle for some simple root $\alpha$.
Then the vector
$$
[X_{-\alpha}^r v_\gamma]\quad\mbox{ with }\quad r=\mathrm{max}\{i\geq 0: X_{-\alpha}^i v_\gamma\neq 0 \mbox{ in } V\}
$$
does not depend upon the choice of the dominant weight $\lambda_j$.
\end{lemma}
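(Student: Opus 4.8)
The plan is to compare the two candidate values directly inside $V$ and to check that their difference lies in $\mathfrak g.v_{\underline\lambda}$, so that the two classes coincide. Suppose $v^{(1)}$ and $v^{(2)}$ both represent the fixed class $[v_\gamma]$, each lying in a single summand $V(\lambda_{j_1})$, resp. $V(\lambda_{j_2})$, of $V$, and set $r_i=\max\{k\ge 0: X_{-\alpha}^k v^{(i)}\neq 0\text{ in }V\}$; the assertion is that $[X_{-\alpha}^{r_1}v^{(1)}]=[X_{-\alpha}^{r_2}v^{(2)}]$. Since $v^{(1)}$ and $v^{(2)}$ represent the same class, $w:=v^{(1)}-v^{(2)}\in\mathfrak g.v_{\underline\lambda}$, and $w$ is a $T_\ad$-weight vector of weight $\gamma$. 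The $T_\ad$-weight-$\gamma$ subspace of $\mathfrak g.v_{\underline\lambda}$ is the image of the $(-\gamma)$-weight space of $\mathfrak g$ under $\xi\mapsto\xi.v_{\underline\lambda}$; hence it vanishes unless $\gamma$ is a root, in which case it is the line $\mathbb C\,X_{-\gamma}v_{\underline\lambda}$. Thus either $w=0$, and there is nothing to prove, or $\gamma$ is a root and $w=c\,X_{-\gamma}v_{\underline\lambda}$ with $c\neq 0$; comparing the components in the various $V(\lambda_m)$ then gives $v^{(i)}=\varepsilon_i\,c\,X_{-\gamma}v_{\lambda_{j_i}}$ (with $\varepsilon_1=1$, $\varepsilon_2=-1$, $\lambda_{j_1}\neq\lambda_{j_2}$) and $X_{-\gamma}v_{\lambda_m}=0$ for every $m\notin\{j_1,j_2\}$.

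Next I would use the $\mathfrak{sl}_2$-triple attached to $\alpha$. If $\alpha\in\supp\gamma$ then $r_1=r_2=0$ (consistently with Proposition~\ref{characterisationofH^1}) and $[X_{-\alpha}^{r_i}v^{(i)}]=[v_\gamma]$ is manifestly independent of $i$; so assume $\alpha\notin\supp\gamma$. Then neither $\alpha-\gamma$ nor $\gamma-\alpha$ is a root, so $[X_\alpha,X_{-\gamma}]=0$, each $v^{(i)}=\varepsilon_i\,c\,X_{-\gamma}v_{\lambda_{j_i}}$ is a highest weight vector for that $\mathfrak{sl}_2$ of weight $r_i=(\lambda_{j_i}-\gamma,\alpha^\vee)$, and the $\alpha$-string through $\gamma$ is $\gamma,\gamma+\alpha,\dots,s_\alpha(\gamma)=\gamma-(\gamma,\alpha^\vee)\alpha$, all roots. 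I would then argue that the weights $\lambda_{j_1},\lambda_{j_2}$ one is permitted to pick here all have the same pairing with $\alpha^\vee$, so that $r_1=r_2=:r$; granting this, and using that $X_{-\alpha}$ has a controlled (indeed small) order of nilpotency on $v_{\lambda_{j_i}}$, the expansion $X_{-\alpha}^{r}X_{-\gamma}=\sum_{k}\binom{r}{k}(\ad X_{-\alpha})^k(X_{-\gamma})\,X_{-\alpha}^{\,r-k}$ leaves a single surviving term on $v_{\lambda_{j_i}}$ and yields $X_{-\alpha}^{r}v^{(i)}=\varepsilon_i\,c\,c_0\,X_{-s_\alpha(\gamma)}v_{\lambda_{j_i}}$, where $c_0\neq 0$ depends only on $\alpha$ and $\gamma$.

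Finally, $X_{-\alpha}^{r}v^{(1)}-X_{-\alpha}^{r}v^{(2)}=c\,c_0\bigl(X_{-s_\alpha(\gamma)}v_{\lambda_{j_1}}+X_{-s_\alpha(\gamma)}v_{\lambda_{j_2}}\bigr)$, and I would conclude by checking that $X_{-s_\alpha(\gamma)}v_{\lambda_m}=0$ for all $m\notin\{j_1,j_2\}$ — using $X_{-\gamma}v_{\lambda_m}=0$, the identity $(s_\alpha\gamma)^\vee=s_\alpha(\gamma^\vee)$, and the constraint $\alpha\notin S^p(\Gamma)$ — so that the displayed difference equals $c\,c_0\,X_{-s_\alpha(\gamma)}v_{\underline\lambda}\in\mathfrak g.v_{\underline\lambda}$, whence $[X_{-\alpha}^{r_1}v^{(1)}]=[X_{-\alpha}^{r_2}v^{(2)}]$. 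The step I expect to be the main obstacle is the pair of structural facts invoked above, namely $(\lambda_{j_1},\alpha^\vee)=(\lambda_{j_2},\alpha^\vee)$ and the vanishing $X_{-s_\alpha(\gamma)}v_{\lambda_m}=0$ for the remaining $m$: both are compatibility statements between the spherical root $\gamma$, the simple root $\alpha$, and the weights $\lambda_i$ generating $\Gamma$, and proving them requires combining the dichotomy of Lemma~\ref{weightvector} with the explicit description of the admissible $\lambda_i$ in Proposition~\ref{non-rigidity} and the support constraints of Lemma~\ref{rootsupport}.
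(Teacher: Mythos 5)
Your opening reduction coincides with the paper's: by Lemma~\ref{weightvector} the ambiguity in the choice of $\lambda_j$ only arises when $\gamma$ is a root and the competing representatives are $X_{-\gamma}v_{\lambda_{j_1}}$ and $X_{-\gamma}v_{\lambda_{j_2}}$ (your direct computation of the $\gamma$-weight space of $\mathfrak g.v_{\underline\lambda}$ is a clean way to see this). After that the arguments diverge, and yours has a genuine gap exactly at the point you flag. The equality $(\lambda_{j_1},\alpha^\vee)=(\lambda_{j_2},\alpha^\vee)$, on which your whole $\mathfrak{sl}_2$ computation rests, is not a combinatorial consequence of Proposition~\ref{non-rigidity} and Lemma~\ref{rootsupport}: admissible configurations do include pairs with $(\lambda_{j_1},\alpha^\vee)=1$ and $(\lambda_{j_2},\alpha^\vee)=0$ (the paper's own proof exhibits precisely such a pair in its final case). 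What saves the statement is the hypothesis that $\varphi_{\alpha,\gamma}$ is a cocycle, which your argument never actually exploits: the paper shows that when $\alpha\notin\supp\gamma$ and one of the two weights is non-orthogonal to $\alpha$, the map $\varphi_{\alpha,\gamma}$ fails to be a cocycle, so the only surviving subcase with $\alpha\notin\supp\gamma$ is the one where both weights (hence $\gamma$) are orthogonal to $\alpha$ and $r_1=r_2=0$ holds for trivial reasons. Without that elimination, "granting this" leaves the main case unproved.

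A second problem is your dismissal of the case $\alpha\in\supp\gamma$ via $r_1=r_2=0$. That value of $r$ is the one from the statement of Proposition~\ref{characterisationofH^1}, but the lemma defines $r$ as $\max\{i\geq 0: X_{-\alpha}^i v_\gamma\neq 0\}$, and this maximum need not vanish when $\alpha\in\supp\gamma$: for $\gamma=\alpha_1+\alpha_2$ and $\lambda=\omega_1+\omega_2$ in type $\mathsf A_2$ one has $X_{-\alpha_1}X_{-\gamma}v_\lambda\neq 0$. This is the case in which the paper actually has to work: it uses Table~\ref{sphericalroots} and Lemma~\ref{rootsupport} to get that $\gamma-\alpha$ is a root, then splits on whether $\gamma+\alpha$ is a root, and in the remaining situation pins down $(\lambda,\alpha^\vee)=1$ and $(\lambda_j,\alpha^\vee)=0$. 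You would need either to supply this analysis or to prove that the two definitions of $r$ agree for cocycle data before your case split is legitimate.
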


\begin{proof}
As recalled in Lemma~\ref{weightvector}, the choice of the dominant weight $\lambda_j$ is not unique whenever
$v_\gamma=X_{-\gamma}v_{\lambda_j}$.
More specifically, we can choose also the representative $v_\gamma$ to be $X_{-\gamma}v_{\lambda_i}$ for some other $\lambda_i\neq \lambda_j$.
If $\lambda_i$ and $\lambda_j$ are both orthogonal to the given simple root $\alpha$ then by Proposition~\ref{non-rigidity} then so is $\gamma$.
Further $\alpha$ can not belong to the support of $\gamma$; the lemma follows.
Suppose that $\lambda_i$ is not orthogonal to $\alpha$ and that $\alpha$ does not belong to the support of $\gamma$.
Then one checks that $\varphi_{\alpha,\gamma}$ can not be a cocycle.
We are thus left with $\lambda_i$ non-orthogonal to $\alpha$ and $\alpha$ lying in the support of $\gamma$.
By Table~\ref{sphericalroots} and Proposition~\ref{rootsupport}, $\gamma-\alpha$ is a root. 
In case $\gamma+\alpha$ is a root, the lemma is obvious otherwise one has that $(\lambda,\alpha^\vee)=1$ and $(\lambda_j,\alpha^\vee)=0$.
The lemma follows.
\end{proof}

\begin{lemma}
Let $\alpha$ be a simple root.
Suppose there exists a unique dominant weight $\lambda_i$ which is not orthogonal to $\alpha$.
Let $[v]\in V/\mathfrak g.v_{\underline\lambda}$ be non-trivial.
Then either there exists a positive root $\beta\neq \alpha$ such that $X_\beta.v\neq 0$ in $V$ or $v=X_{-\alpha}^r v_{\lambda_i}$ in $V$
with $r=2$ or $4$.
\end{lemma}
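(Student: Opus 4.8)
The plan is to study, under the hypothesis of the second alternative, those $v\in V$ annihilated by $X_\beta$ for every positive root $\beta\neq\alpha$; since $V/\mathfrak g\cdot v_{\underline\lambda}$ is graded by $T_\ad$ we may assume that the representative $v$ is a $T$-weight vector. Decomposing $v=\sum_j v^{(j)}$ along $V=V(\lambda_1)\oplus\dots\oplus V(\lambda_s)$, each $v^{(j)}$ is again killed by all positive $\beta\neq\alpha$, i.e.\ by the nilradical $\mathfrak n_\alpha$ of the parabolic subalgebra $\mathfrak p_\alpha\supset\mathfrak b$ generated by $\mathfrak b$ and $\mathfrak g_{-\alpha}$. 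The space $V(\lambda_j)^{\mathfrak n_\alpha}$ is the irreducible module over the Levi factor $\mathfrak l_\alpha$ of $\mathfrak p_\alpha$ generated by the highest weight vector $v_{\lambda_j}$; as a module over the copy of $\mathfrak{sl}_2$ attached to $\alpha$ it is the string spanned by $v_{\lambda_j},X_{-\alpha}v_{\lambda_j},\dots,X_{-\alpha}^{(\lambda_j,\alpha^\vee)}v_{\lambda_j}$. Since $v^{(j)}$ has a single $T$-weight, $v^{(j)}$ is a scalar multiple of $X_{-\alpha}^{m_j}v_{\lambda_j}$ for some $0\le m_j\le(\lambda_j,\alpha^\vee)$.

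Next I would exploit the uniqueness hypothesis. For $j\neq i$ we have $(\lambda_j,\alpha^\vee)=0$, hence $m_j=0$ and $v^{(j)}\in\mathbb C\,v_{\lambda_j}$; and by linear independence of the $\lambda_k$ there is an element $H$ of the Cartan subalgebra with $H\cdot v_{\underline\lambda}=v_{\lambda_j}$, so $v^{(j)}\in\mathfrak g\cdot v_{\underline\lambda}$. Subtracting these terms off and rescaling, we may take $v=X_{-\alpha}^m v_{\lambda_i}$ in $V$, with $m=m_i$. Non-triviality of $[v]$ rules out $m=0$ (again $v_{\lambda_i}\in\mathfrak g\cdot v_{\underline\lambda}$) and $m=1$ (here $X_{-\alpha}v_{\lambda_i}=X_{-\alpha}\cdot v_{\underline\lambda}\in\mathfrak g\cdot v_{\underline\lambda}$, since $X_{-\alpha}$ kills every $v_{\lambda_j}$ with $j\neq i$); a weight-space comparison shows, conversely, that $X_{-\alpha}^m v_{\lambda_i}\notin\mathfrak g\cdot v_{\underline\lambda}$ for all $2\le m\le(\lambda_i,\alpha^\vee)$. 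Thus at this stage we know $m\ge2$, and in the cocycle situation of Proposition~\ref{characterisationofH^1} one may take $m$ maximal, i.e.\ $m=(\lambda_i,\alpha^\vee)$.

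It remains to prove $m\in\{2,4\}$, which is the one point beyond formal $\mathfrak{sl}_2$-theory. Here I would invoke Proposition~\ref{non-rigidity}: the tangent space being non-trivial and $\lambda_i$ being the unique defining weight not orthogonal to $\alpha$ force $\lambda_i$ into one of the shapes listed there — up to a positive scalar a color $\omega_\alpha$, $2\omega_\alpha$ or $\omega_\alpha+\omega_\beta$, or $\omega_\alpha+\sum a_\delta\omega_\delta$. The shapes with $(\lambda_i,\alpha^\vee)=1$ are incompatible with $m\ge2$, so $\lambda_i$ is a multiple of $2\omega_\alpha$; the admissible multiples are then cut down by the last assertion of Proposition~\ref{non-rigidity} (that $\Sigma(\Gamma)$ is a singleton when $\lambda_i$ is a proper multiple of a color) together with the fact, visible in Table~\ref{sphericalroots}, that $2\alpha$ is the only spherical root supported on the single simple root $\alpha$; the exceptional types are discarded using Lemmas~\ref{rootsupport} and~\ref{f4}. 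Running through this short list leaves exactly $(\lambda_i,\alpha^\vee)\in\{2,4\}$; hence $v=X_{-\alpha}^r v_{\lambda_i}$ with $r=2$ or $r=4$, as desired. I expect the case analysis in this last paragraph — in particular excluding $r=3$ and $r\ge5$ — to be the main obstacle, since it is where the combinatorial classification recalled in Section~\ref{connection}, rather than mere $\mathfrak{sl}_2$-representation theory, really enters.
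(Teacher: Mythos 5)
Your proposal is correct and follows essentially the same route as the paper's own (much terser) proof: annihilation by all $X_\beta$, $\beta\neq\alpha$ positive, forces $v$ onto the $\alpha$-string through the highest weight vectors, the classes of $v_{\lambda_i}$ and $X_{-\alpha}v_{\lambda_i}$ are trivial, and Proposition~\ref{non-rigidity} pins down $r\in\{2,4\}$. Your version merely fills in the $\mathfrak n_\alpha$-invariant/Levi-string justification and the weight-space comparisons that the paper leaves implicit.
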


\begin{proof} 
If $X_\beta v=0$ for all positive roots $\beta$ distinct to $\alpha$
then $v$ will be equal either to $v_{\lambda_i}$, $X_{-\alpha} v_{\lambda_i}$ or to $X_{-\alpha}^r v_{\lambda_i}$ with $r>1$ 
for some $\lambda_i$.
Under the assumptions of the lemma, the class  of $v$ in $V/\mathfrak g.{v_{\underline \lambda}}$ will be trivial unless it is equal to the latter possibility. 
By Proposition~\ref{non-rigidity}, the integer $r$ equals $2$ or $4$.
\end{proof}

\begin{remark}
The condition of the previous lemma is fulfilled whenever the $\lambda_i$'s are the colors of a spherical system.
\end{remark}

\begin{lemma}\label{invariancecondition}
We have $X_\beta\varphi(X_\alpha)=0$ in $V/\mathfrak g.v_{\underline\lambda}$ for every root $\beta\neq \alpha$ of the isotropy Lie algebra
$\mathfrak g_{v_{\underline\lambda}}$.
\end{lemma}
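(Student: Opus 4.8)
The plan is to deduce the assertion --- which is the relation needed to see that $\varphi_{\alpha,\gamma}$ determines a class in $H^1(\mathfrak g_{v_{\underline\lambda}},V/\mathfrak g.v_{\underline\lambda})$ --- from the $G_{v_{\underline\lambda}}$-invariance of $[v_\gamma]$ together with a computation inside the $\mathfrak{sl}_2$-triple $(X_\alpha,\alpha^\vee,X_{-\alpha})$ attached to $\alpha$. First I would recall that $X_{-\alpha}\notin\mathfrak g_{v_{\underline\lambda}}$ since $\alpha\notin S^p(\Gamma)$, whereas all positive root vectors of $G$ and all $X_{-\delta}$ with $\delta\in S^p(\Gamma)$ do belong to $\mathfrak g_{v_{\underline\lambda}}$; and, using the two lemmas established just above, I would reduce to the situation in which $X_{-\alpha}$ acts through a single copy of $\mathfrak{sl}_2$ on some $V(\lambda_j)$ and annihilates the remaining $v_{\lambda_i}$. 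I would also record that, $[v_\gamma]$ being $G_{v_{\underline\lambda}}$-invariant, one has $X_\mu v_\gamma\in\mathfrak g.v_{\underline\lambda}$ for every root $\mu$ of $\mathfrak g_{v_{\underline\lambda}}$, so that the content of the lemma is to propagate this from $v_\gamma$ to $X_{-\alpha}^r v_\gamma$, where $r=\max\{k\ge0:X_{-\alpha}^k v_\gamma\ne0\}$.

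The crucial step is to commute $X_\beta$ past the power $X_{-\alpha}^r$. Since $\ad(X_{-\alpha})$ lowers weights by $\alpha$, this gives a finite expansion
\[
X_\beta\,X_{-\alpha}^r v_\gamma=\sum_{k\ge 0}c_k\,X_{-\alpha}^{\,r-k}\,X_{\beta-k\alpha}\,v_\gamma,\qquad c_0=1,
\]
with the convention that $X_{\beta-k\alpha}=0$ unless $\beta-k\alpha$ is a root. By Table~\ref{sphericalroots} together with Lemma~\ref{rootsupport} the $\alpha$-string through $\beta$ has length at most $4$, so only finitely many $k$ contribute, and it suffices to check that each summand lies in $\mathfrak g.v_{\underline\lambda}$. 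The summand $k=0$ is $X_{-\alpha}^r(X_\beta v_\gamma)$: when $\beta$ is a positive root and $v_\gamma=v_{\lambda_j}$ it vanishes outright, and otherwise one uses $X_\beta v_\gamma\in\mathfrak g.v_{\underline\lambda}$ and argues by induction on $r$, the inductive step being manageable because $X_{-\alpha}$ moves $\mathfrak g.v_{\underline\lambda}$ only through correction terms of the shape $\xi\,(X_{-\alpha}v_{\lambda_j})$ with $X_{-\alpha}\in\bigcap_{i\ne j}\mathfrak g_{v_{\lambda_i}}$. For the summands with $k\ge1$ I would put $v_\gamma$ into the form $v_{\lambda_j}$ or $X_{-\gamma}v_{\lambda_j}$ via Lemma~\ref{weightvector}, and then invoke the shape of $\gamma$ in Table~\ref{sphericalroots} and the support constraints of Lemma~\ref{rootsupport} and Proposition~\ref{non-rigidity} to force $X_{-\alpha}^{\,r-k}X_{\beta-k\alpha}v_\gamma\in\mathfrak g.v_{\underline\lambda}$.

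The delicate cases are the long $\mathfrak{sl}_2$-strings: the spherical root $4\alpha_1+2\alpha_2$ in type $\mathsf G_2$, where $r$ can equal $4$, and the mixed configurations in type $\mathsf F_4$, the latter being exactly what Lemma~\ref{f4} serves to exclude. The main obstacle is precisely this handling of the correction terms $X_{-\alpha}^{\,r-k}X_{\beta-k\alpha}v_\gamma$ for $k\ge1$, which do not follow formally from the invariance of $[v_\gamma]$; what makes the argument go through is the interplay of the uniqueness of the weight $\lambda_j$ not orthogonal to $\alpha$ (so that $X_{-\alpha}$ really behaves as the lowering operator of a single $\mathfrak{sl}_2$), the a priori restrictions on $\gamma$ coming from Proposition~\ref{non-rigidity} and Lemma~\ref{rootsupport}, and the finiteness of the list in Table~\ref{sphericalroots}, which bounds the relevant string lengths and leaves Lemma~\ref{f4} as the only ad hoc ingredient.
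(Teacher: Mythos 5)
Your proposal goes in a genuinely different direction from the paper, and in its present form it has a real gap. The paper's proof of Lemma~\ref{invariancecondition} is a short formal argument that never touches Table~\ref{sphericalroots}: there $\varphi$ is an arbitrary cocycle which is a $T_\ad$-weight vector, so if $\varphi(X_\alpha)$ has $T_\ad$-weight $\mu$ then $\varphi(X_\beta)$ is forced to have $T_\ad$-weight $\mu-\alpha+\beta$; on the other hand the cocycle identity gives $X_\beta\varphi(X_\alpha)\equiv X_\alpha\varphi(X_\beta)$ modulo $\mathfrak g.v_{\underline\lambda}$, and if this class were nonzero a second weight computation would force $\varphi(X_\beta)$ to carry the weight $\mu-\beta+\alpha$; comparing the two expressions yields $\alpha=\beta$, contradicting the hypothesis. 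Note also that the lemma is a statement about a general weight cocycle --- it is what later allows the author to conclude that $[\varphi(X_\alpha)]$ is $G_{v_{\underline\lambda}}$-invariant --- and not a verification that the explicit map $\varphi_{\alpha,\gamma}$ satisfies the cocycle condition; your reading inverts the logical direction of the whole subsection (for $\varphi_{\alpha,\gamma}$ itself, which vanishes on every $X_\delta$ with $\delta\neq\alpha$, the cocycle identity would reduce the claim to $\varphi([X_\beta,X_\alpha])=0$, which is immediate).

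The substantive gap in your argument is the one you yourself flag. After expanding $X_\beta X_{-\alpha}^r v_\gamma=\sum_k c_k X_{-\alpha}^{r-k}X_{\beta-k\alpha}v_\gamma$, everything hinges on showing that each correction term with $k\geq1$ lies in $\mathfrak g.v_{\underline\lambda}$, and for this you only say you ``would invoke'' Table~\ref{sphericalroots}, Lemma~\ref{rootsupport} and Proposition~\ref{non-rigidity}. No mechanism is given by which these results control vectors of the form $X_{-\alpha}^{r-k}X_{\beta-k\alpha}v_\gamma$: they constrain the weight $\gamma$ and the pairings of simple roots with the $\lambda_i$, not the image of $v_\gamma$ under arbitrary compositions of root vectors. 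As it stands the proposed case analysis is a programme rather than a proof, with no guarantee that it closes in the long-string cases ($\mathsf G_2$, $\mathsf F_4$) you single out, whereas the weight/cocycle argument of the paper bypasses all of this uniformly.
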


\begin{proof}
Let $\mu$ be the $T_\ad$-weight of $\varphi(X_\alpha)$ then $\varphi(X_\beta)$ should be of $T_\ad$-weight $\mu-\alpha+\beta$, for any root $\beta\neq\alpha$ of the isotropy Lie algebra $\mathfrak g_{v_{\underline\lambda}}$.

Let $\beta$ be a simple root. Suppose that $X_\beta\varphi(X_\alpha)\not\in\mathfrak g.v_{\underline\lambda}$.
Since $X_\beta\varphi(X_\alpha)-X_\alpha\varphi(X_\beta)\in\mathfrak g.v_{\underline\lambda}$,
it follows that $X_\alpha\varphi(X_\beta)\not\in\mathfrak g.v_{\underline\lambda}$.
And in particular $\varphi(X_\beta)$ is then a weight vector of weight $\mu-\beta+\alpha$, which is possible only in case $\beta=\alpha$.
The lemma follows.
\end{proof}

The following proposition is the announced generalization of Lemma~\ref{rootsupport}.

\begin{proposition}\label{supportofGamma}
Take a $T_\ad$-weightvector $[v_\gamma]\in V/\mathfrak g.v_{\underline\lambda}$ of weight $\gamma$.
Let $\alpha,\delta$ be orthogonal simple roots with $\delta$ in the  support of $\gamma$.
Suppose that $\gamma-\delta$ is not a root and that
$$
[X_\beta v_\gamma]=0\quad\mbox{ for all $\beta\neq \alpha$}.
$$
Then every $\lambda_i$ is orthogonal to $\delta$.
\end{proposition}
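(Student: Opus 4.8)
The plan is to exploit the cocycle condition for $[v_\gamma]$ together with the generalization of Lemma~\ref{rootsupport} that is already available. Since $[v_\gamma]\in V/\mathfrak g.v_{\underline\lambda}$ is $G_{v_{\underline\lambda}}$-invariant up to the given weight, the hypothesis $[X_\beta v_\gamma]=0$ for all $\beta\neq\alpha$ says that $[v_\gamma]$ behaves, with respect to the isotropy Lie algebra, exactly like the image $\varphi(X_\alpha)$ of a cocycle of the form $\varphi_{\alpha,\gamma}$ considered in Proposition~\ref{characterisationofH^1}; in particular Lemma~\ref{invariancecondition} applies and gives $X_\beta v_\gamma\in\mathfrak g.v_{\underline\lambda}$ for every root $\beta\neq\alpha$ of $\mathfrak g_{v_{\underline\lambda}}$, not just for the simple ones. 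First I would fix a representative $v_\gamma\in V(\lambda_j)_{\lambda_j-\gamma}$ for some $\lambda_j$ (using Lemma~\ref{weightvector}), and distinguish the two cases there: $[v_\gamma]$ coming from $(V(\lambda_j)/\mathfrak g.v_{\lambda_j})^{G_{v_{\lambda_j}}}$, or $v_\gamma=X_{-\gamma}v_{\lambda_j}$.

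Next I would bring in $\delta$. Since $\delta\in\supp\gamma$ and $\gamma-\delta$ is not a root, Lemma~\ref{rootsupport} forces $(\gamma,\delta)\geq 0$; combined with $\delta\in\supp\gamma$ and the explicit list in Table~\ref{sphericalroots}, this pins down rather tightly how $\delta$ can sit inside $\supp\gamma$ (it must be an ``end'' node contributing a coefficient that does not allow $\gamma-\delta$ to be a root). The key point is that the simple root $\delta$, being orthogonal to $\alpha$, is a root of $\mathfrak g_{v_{\underline\lambda}}$ as soon as $\delta\in S^p(\Gamma)$, i.e. as soon as $\delta$ is orthogonal to every $\lambda_i$ — which is precisely the conclusion we want, so we argue by contradiction: suppose some $\lambda_i$ is not orthogonal to $\delta$. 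Then I would produce, using that $\delta\in\supp\gamma$, an element $X_\beta$ with $\beta$ a root of $\mathfrak g_{v_{\underline\lambda}}$ (built from $\delta$ and the simple roots orthogonal to the $\lambda$'s, or from $-\delta$ shifted appropriately) such that $X_\beta v_\gamma$ has a nonzero component in $V(\lambda_i)$ outside $\mathfrak g.v_{\underline\lambda}$, contradicting $[X_\beta v_\gamma]=0$. Concretely: because $\gamma$ is a sum of simple roots with $\delta$ appearing, lowering $v_{\lambda_j}$ (or $X_{-\gamma}v_{\lambda_j}$) by the appropriate string involving $\delta$ lands on a $T_\ad$-weight that, when $\lambda_i$ is not $\delta$-orthogonal, is not annihilated modulo $\mathfrak g.v_{\underline\lambda}$ — this is where Lemma~\ref{rootsupport}'s second assertion (if $\gamma\perp\delta$ then $\delta\perp$ all $\lambda_i$) and the case analysis from Table~\ref{sphericalroots} do the bookkeeping.

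I expect the main obstacle to be the bookkeeping in this last contradiction step: one has to check, across the possible shapes of $\gamma$ in Table~\ref{sphericalroots} and the possible positions of the end node $\delta$, that the relevant $\mathfrak{sl}_2$- or higher rank-$2$ string generated by $\delta$ really does move $v_\gamma$ to something nonzero modulo $\mathfrak g.v_{\underline\lambda}$ whenever some $\lambda_i$ fails to be $\delta$-orthogonal. The cleanest way I see to organize this is to reduce to the rank-two Levi subalgebra with simple roots $\{\delta,\delta'\}$ where $\delta'$ is the neighbor of $\delta$ in $\supp\gamma$ (so $\gamma-\delta$ not being a root forces a specific coefficient pattern there), and to argue that $[v_\gamma]$ being $G_{v_{\underline\lambda}}$-fixed with $[X_\beta v_\gamma]=0$ for $\beta\neq\alpha$ is incompatible with $(\lambda_i,\delta^\vee)\neq 0$ in each rank-two case, by the same computations already used to establish Lemma~\ref{rootsupport} and Table~\ref{sphericalroots}. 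Once $\delta$ is orthogonal to all $\lambda_i$, it is a simple root of $\mathfrak g_{v_{\underline\lambda}}$, and then I would close the argument by noting that the hypothesis already contains the statement ``$[X_\delta v_\gamma]=0$'', so no new information is needed — the content is exactly that the failure of $\delta$-orthogonality is impossible.
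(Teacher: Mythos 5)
Your overall strategy---argue by contradiction, fix a representative via Lemma~\ref{weightvector}, and use the invariance condition of Lemma~\ref{invariancecondition} to exhibit a root $\beta\neq\alpha$ of $\mathfrak g_{v_{\underline\lambda}}$ with $[X_\beta v_\gamma]\neq 0$---is the same opening as the paper's. But the proposal stops exactly where the proof has to start: the existence of such a $\beta$ \emph{is} the content of the proposition, and you defer it to ``bookkeeping'' over Table~\ref{sphericalroots} and rank-two Levi subalgebras without ever explaining how the two hypotheses ($\delta\perp\alpha$ and $\gamma-\delta$ not a root) produce the contradiction. In particular you never address the central difficulty: the hypothesis $[X_\beta v_\gamma]=0$ does not say $X_\beta v_\gamma=0$ in $V$, only that it lies in $\mathfrak g.v_{\underline\lambda}$, so showing that some $X_\beta v_\gamma$ has ``a nonzero component outside $\mathfrak g.v_{\underline\lambda}$'' requires controlling which elements of $\mathfrak g.v_{\underline\lambda}$ can occur in the relevant weight space. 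Your reading of the hypothesis ``$\gamma-\delta$ is not a root'' (via Lemma~\ref{rootsupport}, to get $(\gamma,\delta)\geq 0$ and pin $\delta$ to an end node) is not where that hypothesis actually does its work.

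The paper's argument, which is classification-free rather than a case check, runs as follows. Assuming some $\lambda_k$ is not orthogonal to $\delta$, one first chooses a positive root $\beta\neq\alpha$ supported in $\supp\gamma$ with $(\lambda_k-\gamma,\beta)<0$ and replaces $v_\gamma$ by $v_\gamma-X_{-\gamma+\beta}v_{\underline\lambda}$ so that the $V(\lambda_k)$-component of the representative vanishes; this normalization is absent from your plan. Next, any nonzero $X_{\beta'}v_\gamma$ lying in $\mathfrak g.v_{\underline\lambda}$ must equal $X_{-\gamma+\beta'}v_{\underline\lambda}$ up to scalar, which forces $\gamma-\beta'$ to be a root. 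This is precisely how ``$\gamma-\delta$ is not a root'' enters: if the relevant $\beta'$ (whose support must contain $\delta$) can be taken simple, then $\beta'=\delta$ and one gets the contradiction immediately. Otherwise $X_{\delta'}v_\gamma=0$ in $V$ for every simple $\delta'\neq\alpha$, and the surviving positive roots $\beta'$ with $X_{\beta'}v_\gamma\neq 0$ must involve $\alpha$, forcing $\delta$ to be adjacent to $\alpha$ and contradicting $\delta\perp\alpha$. Without these two moves (the adjustment of the representative modulo $\mathfrak g.v_{\underline\lambda}$, and the identification of the possible images in $\mathfrak g.v_{\underline\lambda}$ forcing $\gamma-\beta'$ to be a root), your proposal does not close, and the case-by-case reduction you sketch would in any event run against the paper's stated aim of avoiding such analysis.
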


\begin{proof}
We start similarly as for the proof of Proposition 3.4 in~\cite{BC1}.
Suppose there exists a dominant weight among the given $\lambda_i$'s, say $\lambda_k$ which is not orthogonal to $\delta$ then in particular such a $\lambda_k$ is not orthogonal to $\gamma$ neither since $\delta$ lies in the newort of $\gamma$.
 
Note that if $\gamma$ is not equal to $\alpha$ (up to a scalar) then there exists
a positive root $\beta\neq \alpha$ whose support is contained in that of $\gamma$
and such that 
$$
(\lambda_k-\gamma,\beta)< 0.
$$

We claim that the component $v_\gamma^k$ of $v_\gamma$ in $V(\lambda_k)$ can be assumed to be trivial.
Indeed if $v_\gamma^k\neq 0$ then $X_\beta v_\gamma\neq 0$ in $V$ because of the inequality stated right above.
And in turn, we will get that $X_\beta v_\gamma=X_{-\gamma+\beta}v_{\underline\lambda}$ by Lemma~\ref{invariancecondition}.
Considering instead the representative $v_\gamma-X_{-\gamma+\beta}v_{\underline\lambda}$ whose component in $V(\lambda_k)$ is trivial, we shall obtain the claim.

Assume thus for the remaining of the proof that $v_\gamma^k=0$.
It follows that if $\lambda_k$ is not orthogonal to $\delta$ then $\delta$ should lie in the support of $\beta'$
where $\beta'$ is a positive root distinct to $\alpha$ such that 
$$
0\neq X_{\beta'}v_\gamma\in\mathfrak g.v_{\underline\lambda}.
$$
If such a root $\beta'$ can be taken to be simple then $\beta'=\delta$ whence a contradiction since $\gamma-\delta$ is not a root.

We are thus left with the situation where $X_\delta v_\gamma=0$ in $V$ for all  simple roots $\delta\neq\alpha$.
Since $X_{\beta'}v_\gamma\neq 0$ in $V$ for some positive root $\beta'$ as above, it follows that
$X_{\alpha'+\alpha}\neq 0$ in $V$ whenever $\alpha'$ is a simple root in the support of $\gamma$ and adjacent to $\alpha$.
Therefore $\delta$ has to be adjacent to $\alpha$.
\end{proof}

\begin{proof} [Proof of Proposition~\ref{characterisationofH^1}]

Let $\gamma$ be the $T_\ad$-weight of $[\varphi(X_\alpha)]$ and denote by $v_\gamma$ a representative of $\varphi(X_\alpha)$ in $V/\mathfrak g.v_{\underline\lambda}$.
Note first that whenever $\alpha$ does not belong to the support then $X_\alpha v_\gamma=0$ in $V$ 
hence $[v_\gamma]\in\left(V/\mathfrak g.{v_{\underline\lambda}}\right)^{G_{v_{\underline\lambda}}}$ by Lemma~\ref{invariancecondition}. 

We shall assume in the remaining of the proof that $\alpha$ does belong to the support of the $T_\ad$-weight $\gamma$.
We shall proceed along the type of the support of $\gamma$.
Let us work out a few cases in details.
The main ingredients are Proposition~\ref{supportofGamma} and Proposition~\ref{non-rigidity}.
Let $v_\gamma$ be non-equal to $X_{-\alpha}^r v_{\lambda_i}$ otherwise the proposition is already proved.
As a consequence of Lemma~\ref{invariancecondition}, the weight $\gamma$ can be written as a sum of two positive roots, say $\beta_1$ and $\beta_2$.

Consider first the case where the supports of the roots $\beta_1$ and $\beta_2$ are orthogonal.
Then by Proposition~\ref{supportofGamma} and its proof, the roots $\beta_1$ and $\beta_2$ have to be simple.
Applying now Proposition~\ref{non-rigidity}, we obtain that there is a single dominant weight, say $\lambda$,
which is neither orthogonal to $\beta_1$ nor to $\beta_2$.
Further if this dominant weight $\lambda$ is a color then $\gamma\in\Sigma(\Gamma)$ otherwise there may exist a second dominant weight non-orthogonal
to one of the roots $\beta_i$, say $\beta_1$. The latter possibility is ruled out.
First note that in that case, $G_ {v_{\underline\lambda}}$ is not connected (\textsl{see}~Proposition~\ref{non-rigidity}).
Further $\alpha$ has to equal $\beta_2$ and $\varphi$ is of weight $\beta_1$.
But $\beta_1$ can not be written as integral combination of the $\lambda_i$'s (\textsl{see}~Proposition~\ref{non-rigidity}) hence $\varphi$ is not fixed by $G_ {v_{\underline\lambda}}/G_ {v_{\underline\lambda}}^\circ$.
 
Suppose now that the support of $\gamma$ is of type $\mathsf A_n$.
From Proposition~\ref{supportofGamma}, we obtain that if $\gamma$ is not a root then $\gamma=\alpha_{i-1}+2\alpha_i+\alpha_{i+1}$ with $\alpha=\alpha_i$.
Thanks to Proposition~\ref{non-rigidity}, all the dominant weights $\lambda_k$ have to be orthogonal to both $\alpha_{i-1}$ and $\alpha_{i+1}$. 
One thus has clearly that
$[v_\gamma]\in\left(V/\mathfrak g.{v_{\underline\lambda}}\right)^{G_{v_{\underline\lambda}}}$ by Lemma~\ref{invariancecondition}.
If $\gamma$ is now a root then one gets: $\gamma=\alpha_i+\ldots+\alpha_j$.
Then Proposition~\ref{supportofGamma} (and its proof) along with Lemma~\ref{invariancecondition} yield: $\alpha=\alpha_i$ or $\alpha=\alpha_j$.
Therefore if the dominant weights are orthogonal to all the simple roots in the support of $\gamma$ except $\alpha_i$ and $\alpha_j$ 
then $[v_\gamma]\in\left(V/\mathfrak g.{v_{\underline\lambda}}\right)^{G_{v_{\underline\lambda}}}$. 
If the dominant weights are not so then obviously $\gamma-\alpha\in\Sigma(\Gamma)$.
Note that the same arguments  can be applied to $\gamma=\alpha_i+\ldots+\alpha_j$ with support of type $\mathsf B_n$.

In case of type $\mathsf B_n$, the weight vector $v_\gamma$ may be equal to $X_{-\alpha}^r.v_\lambda$ whenever $\alpha=\alpha_n$, $\alpha_1+\ldots+\alpha_n\in\Sigma(\Gamma)$ and there are at least two dominant weights among the given $\lambda_i$'s which are not orthogonal to $\alpha_n$. Suppose $\gamma$ is a root.
Similarly as before, one obtains that $\gamma=\alpha_i+\ldots+\alpha_n$ or $\gamma=2(\alpha_i+\ldots+\alpha_n)$ if $\alpha$ lies in the support of $\gamma$. Let $\gamma=2(\alpha_i+\ldots+\alpha_n)$. Then since $\gamma-\delta$ is not a root for each simple root distinct to $\alpha_i$, 
$X_{\alpha_i}.v_\gamma\neq 0$ in $V$ and Proposition~\ref{supportofGamma} gives: $\alpha=\alpha_i$ otherwise $\alpha$ will be in $S^p$- a contradiction. Finally, one gets that $[v_\gamma]\in\left(V/\mathfrak g.{v_{\underline\lambda}}\right)^{G_{v_{\underline\lambda}}}$ whenever
$\alpha_{i+1}\in S^p$ otherwise $\gamma-2\alpha\in\Sigma(\Gamma)$.

The other types can be conducted similarly.

\end{proof}

\subsection{\textbf{Proof of Theorem~\ref{smoothness}}}

To show the smoothness of the invariant Hilbert scheme (Theorem~\ref{smoothness}),
we shall use the characterization of the second cotangent module given in Proposition~\ref{characterisation}
and thus prove the following statement by means of Proposition~\ref{characterisationofH^1}.

Let 
$$
V=V(\lambda_1)\oplus\ldots\oplus V(\lambda_s)
$$
where the dominant weights $\lambda_i$ satisfy the properties of Proposition~\ref{non-rigidity}.

For short, set
$$
S^2 V/V(2\underline\lambda)=\oplus_{1\leq i,j\leq s} V(\lambda_i)\cdot V(\lambda_j)/V(\lambda_i+\lambda_j).
$$ 

Take $\dot{\varphi}$ in $\mathop{H^1}(\mathfrak g_{v_{\underline\lambda}},V/\mathfrak g.v_{\underline\lambda})$
and denote by $\varphi$ a cocycle representing $\dot{\varphi}$.
Recall the definition of the map $f$: let $X_\alpha$ be any root vector of the isotropy Lie algebra $\mathfrak g_{v_{\underline\lambda}}$, we have
$$
f{\varphi}(X_\alpha)=\sum_i\varphi(X_\alpha)\cdot v_{\lambda_i}.
$$

\begin{proposition}\label{triviality}
The map 
$$
f:H^1(\mathfrak g_{v_{\underline\lambda}},V/\mathfrak g.v_{\underline\lambda})\rightarrow H^1(\mathfrak g_{v_{\underline\lambda}},S^2 V/V(2\underline\lambda))
$$
is injective.
\end{proposition}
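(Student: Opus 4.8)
The plan is to use the explicit description of cocycle representatives furnished by Proposition~\ref{characterisationofH^1}. Since $H^1(\mathfrak g_{v_{\underline\lambda}},V/\mathfrak g.v_{\underline\lambda})$ is a multiplicity-free $T_\ad$-module (by Theorem~\ref{tgtspaceasmodule}, together with Proposition~\ref{characterisation}, the $G$-invariants of $T^2_{X_0}$ inject into it), it suffices to check that $f$ does not kill any of the basis vectors $\dot\varphi_{\alpha,\gamma}$, where $\alpha\in S\setminus S^p(\Gamma)$ and $\gamma$ is either a spherical root in $\Sigma(\Gamma)$ or one of the dominant weights $\lambda_i$. Concretely, $f\varphi_{\alpha,\gamma}$ is the cocycle $X_\alpha\mapsto\sum_i [X_{-\alpha}^r v_\gamma]\cdot v_{\lambda_i}$ inside $S^2 V/V(2\underline\lambda)$, and one must show this cocycle is not a coboundary, i.e. that $\sum_i (X_{-\alpha}^r v_\gamma)\cdot v_{\lambda_i}\notin \mathfrak g_{v_{\underline\lambda}}\cdot v_{\underline\lambda}\cdot v_{\underline\lambda}$ (the image of the tangent space of the $G$-orbit inside the relations), or more precisely that no correction by an element of $\bigl(S^2V/V(2\underline\lambda)\bigr)^{?}$ makes it vanish as a cocycle.

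The key step is the following multiplication lemma: if $w\in V(\lambda_j)_{\lambda_j-\gamma'}$ is a nonzero weight vector with $\gamma'$ a sum of positive roots of small height (as forced by Table~\ref{sphericalroots}), then the product $w\cdot v_{\lambda_j}\in V(\lambda_j)\cdot V(\lambda_j)$ has nonzero image in the Cartan complement $K_{j,j}=\ker\{S^2V(\lambda_j)\to V(2\lambda_j)\}$, unless $w$ itself is a highest weight vector; and similarly $w\cdot v_{\lambda_i}$ has nonzero image in $K_{i,j}$ when $\lambda_i$ is suitably non-orthogonal to the relevant simple roots. This is essentially a statement about the PRV component / the kernel of the Cartan projection, and it can be verified weight-space by weight-space because the weight $\lambda_j+\lambda_j-\gamma'$ (resp. $\lambda_i+\lambda_j-\gamma'$) is extremal enough in $V(2\lambda_j)$ (resp. $V(\lambda_i+\lambda_j)$) that the relevant weight multiplicity in the Cartan piece is one while that in $S^2V(\lambda_j)$ (resp. $V(\lambda_i)\otimes V(\lambda_j)$) is at least two. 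I would organize this along the type of $\supp\gamma$, exactly as in the proof of Proposition~\ref{characterisationofH^1}, reusing Proposition~\ref{supportofGamma} and Proposition~\ref{non-rigidity} to control which $\lambda_i$ fail to be orthogonal to $\supp\gamma$.

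There is one genuinely subtle point: the case $\gamma=2\alpha$ (type $\mathsf A_1$) and, more generally, the cases where $r>0$ in $\varphi_{\alpha,\gamma}$ so that $v_\gamma$ is replaced by $X_{-\alpha}^r v_\gamma$ — here one must rule out a cancellation between the $j$-summand and the other $i$-summands in $\sum_i (X_{-\alpha}^r v_\gamma)\cdot v_{\lambda_i}$. Because the weights $\lambda_i$ are linearly independent, the products $(X_{-\alpha}^r v_\gamma)\cdot v_{\lambda_i}$ land in the distinct $T$-isotypic blocks $V(\lambda_i)\cdot V(\lambda_j)$ for distinct $i$ (once we fix that $v_\gamma\in V(\lambda_j)$), so no cancellation across blocks is possible; the only thing to check is nonvanishing in a single block, which the multiplication lemma handles. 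The analogous non-cancellation for the coboundary correction uses that a coboundary $X_\delta\mapsto X_\delta\cdot m$ with $m\in S^2V/V(2\underline\lambda)$ must vanish on all $X_\delta$, $\delta\neq\alpha$, and be of the correct weight on $X_\alpha$, and that the weight of $f\varphi_{\alpha,\gamma}(X_\alpha)$ cannot be achieved by $X_\alpha\cdot m$ for any $T_\ad$-weight vector $m$ of the appropriate weight that is simultaneously annihilated by all other $X_\delta$; this is the same rigidity argument as in Lemma~\ref{invariancecondition}.

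The main obstacle I expect is bookkeeping rather than conceptual: one must go through every row of Table~\ref{sphericalroots}, and for each choose the positive root $\beta$ witnessing a strictly negative pairing $(\lambda_i-\gamma,\beta)<0$ (hence nonvanishing of $X_\beta$ on the relevant product), while simultaneously keeping track of the degenerate sub-cases $n=1,3,4$ listed in the table and of the exceptional types $\mathsf F_4$ and $\mathsf G_2$ — for the latter Lemma~\ref{f4} and the constraints of Proposition~\ref{non-rigidity} sharply limit which configurations of $\underline\lambda$ can even arise, so only a handful of products actually need to be evaluated. Once the multiplication lemma is set up cleanly, each case reduces to exhibiting one such $\beta$, and the injectivity of $f$ follows by multiplicity-freeness.
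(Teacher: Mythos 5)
Your proposal follows the same skeleton as the paper's proof: reduce via Proposition~\ref{characterisationofH^1} to the $T_\ad$-weight cocycles $\varphi_{\alpha,\gamma}$, show that the image cocycle is not a coboundary, and run a case analysis over Table~\ref{sphericalroots} controlled by Proposition~\ref{non-rigidity}. The genuine gap is in the coboundary exclusion, which you treat as a formality but which is the actual content of the proof. Your multiplication lemma, even if established, only yields that $\sum_i v_{s_\alpha*\gamma}\cdot v_{\lambda_i}$ is nonzero in $S^2V/V(2\underline\lambda)$, i.e.\ that $f\varphi_{\alpha,\gamma}$ is a nonzero cocycle; injectivity on $H^1$ needs the stronger fact that $v_{s_\alpha*\gamma}\cdot v_{\lambda_i}$ does not lie in $X_\alpha\cdot\bigl(S^2V/V(2\underline\lambda)\bigr)$. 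The justification you offer --- that the weight of $f\varphi_{\alpha,\gamma}(X_\alpha)$ cannot be realized as $X_\alpha\cdot m$, by ``the same rigidity argument as in Lemma~\ref{invariancecondition}'' --- does not hold up: $X_\alpha\cdot m$ realizes every weight $\mu$ for which the weight space of weight $\mu-\alpha$ is not annihilated by $X_\alpha$, and Lemma~\ref{invariancecondition} concerns the cocycle identity on the source Lie algebra, not coboundaries in the target module. This is exactly where the paper inserts its two $\mathfrak{sl}_2$-string criteria: either $X_{-\alpha}\bigl(v_{s_\alpha*\gamma}\cdot v_{\lambda_i}\bigr)=0$ in $S^2V/V(2\underline\lambda)$ (assertion~(\ref{1tobeproved})), which places the nonzero product at the bottom of its $\alpha$-string and hence outside the image of $X_\alpha$, or $X_\alpha^{a}v_{s_\alpha*\gamma}\neq0$ in $V$ with $a=(\lambda_i,\alpha^\vee)$ (assertion~(\ref{2tobeproved})); one of the two is then verified in each configuration by a sequence of lemmas. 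Without a criterion of this kind your case analysis cannot close.

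A secondary weakness is the multiplication lemma itself: the multiplicity count you propose (multiplicity one of the weight $\lambda_i+\lambda_j-\gamma'$ in the Cartan component versus at least two in $V(\lambda_i)\otimes V(\lambda_j)$) shows only that the corresponding weight space of $K_{i,j}$ is nonzero; it does not show that the specific vector $w\cdot v_{\lambda_i}$ projects nontrivially to it, and for $w$ in $\mathfrak g\cdot v_{\lambda_j}$ the product may well lie entirely in the Cartan component. The paper sidesteps this by arguing directly on $\alpha$-strings (e.g.\ $X_{-\alpha}^{r}v_{\lambda_j}\cdot v_{\lambda_j}\neq0$ in the quotient when $r=(\lambda_j,\alpha^\vee)>1$ in Lemma~\ref{lemma1injectivity}). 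Your observation that products landing in distinct blocks $V(\lambda_i)\cdot V(\lambda_j)$ cannot cancel against one another is correct and is used implicitly in the paper as well.
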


\begin{proof}
To prove the injectivity of $f$, we can assume without loss of generality that $\varphi$ is a $T_\ad$-weight vector.
By Proposition~\ref{characterisationofH^1}, $\varphi=\varphi_{\alpha,\gamma}$ for some $\alpha$ simple and $\gamma\in S(\Gamma)$ or $\gamma=\lambda_i$.
Set for convenience $\varphi(X_\alpha)=[v_{s_\alpha*\gamma}]$.

It is thus enough to prove that there exists  $v_{s_\alpha*\gamma}\cdot v_{\lambda_i}$ non-trivial in $\mathop S^2V/V(2\underline\lambda)$
for which
there is no $v\in\mathop S^2V/V(2\underline\lambda)$ such that $v_{s_\alpha*\gamma}\cdot v_{\lambda_i}=X_\alpha v$ in $\mathop{S^2}V/V(2\underline\lambda)$.

Note that this assertion holds whenever 
\begin{equation}\label{1tobeproved}
X_{-\alpha}\bigl(v_{s_\alpha*\gamma}\cdot v_{\lambda_i}\bigr)=0\quad\mbox{ in }\mathop{S^2}V/V(2\underline\lambda)
\end{equation} 
or
\begin{equation}~\label{2tobeproved}
X_\alpha^a v_{s_\alpha*\gamma}\neq 0\mbox{ in $V$}\quad\mbox{ for } a=(\lambda_i,\alpha^\vee).
\end{equation}

Let us consider first $X_{-\alpha}\bigl(v_{s_\alpha*\gamma}\cdot v_{\lambda_i}\bigr)$.
Note that by definition of $r$, we have: $X_{-\alpha}v_{s_\alpha*\gamma}=0$ in $V$.
We thus have 
$$
X_{-\alpha}\bigl(v_{s_\alpha*\gamma}\cdot v_{\lambda_i}\bigr)=v_{s_\alpha*\gamma}\cdot X_{-\alpha}v_{\lambda_i}.
$$

When $\lambda_i$ is orthogonal to $\alpha$, assertion~(\ref{1tobeproved}) to be proved is thus clear.

Suppose that $\lambda_i$ is not orthogonal to $\alpha$.
If $(\lambda_i,\gamma)\neq 0$, we have $(\gamma,\alpha)\geq 0$ by Proposition~\ref{non-rigidity}.
We shall prove assertion~(\ref{2tobeproved}) considering the cases where $(\lambda_i,\gamma)=0$ and $(\lambda_i,\gamma)\neq 0$ separately; this is done in the next lemmas.
\end{proof}

\begin{lemma}\label{lemma1injectivity}
Let $\varphi(X_\alpha)=X_{-\alpha}^r v_{\lambda_j}$ for some $\lambda_j$.
Then Assertion~(\ref{2tobeproved}) holds with $\lambda_i=\lambda_j$.
\end{lemma}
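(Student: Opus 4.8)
The plan is to verify Assertion~(\ref{2tobeproved}) directly in this special case, where the cocycle representative is the "extra" vector $X_{-\alpha}^r v_{\lambda_j}$ coming from the second case of Lemma~\ref{weightvector}. Set $v_{s_\alpha*\gamma}=X_{-\alpha}^r v_{\lambda_j}$ with $r=\max\{i\ge 0:X_{-\alpha}^i v_{\lambda_j}\neq 0\}$, and recall that for the $\mathfrak{sl}_2$-triple attached to $\alpha$ acting on $V(\lambda_j)$, the highest weight vector $v_{\lambda_j}$ generates an irreducible $\mathfrak{sl}_2$-module of dimension $(\lambda_j,\alpha^\vee)+1$; hence $r=(\lambda_j,\alpha^\vee)=:a$, since $\lambda_i=\lambda_j$ here. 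The quantity to control is $X_\alpha^a v_{s_\alpha*\gamma}=X_\alpha^a X_{-\alpha}^a v_{\lambda_j}$, and the point is that $X_\alpha$ and $X_{-\alpha}$ act within the single irreducible $\mathfrak{sl}_2$-summand $\mathbb{C}v_{\lambda_j}+\mathbb{C}X_{-\alpha}v_{\lambda_j}+\dots+\mathbb{C}X_{-\alpha}^a v_{\lambda_j}$ of $V(\lambda_j)$.

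First I would invoke the standard $\mathfrak{sl}_2$-representation theory: in the irreducible module of highest weight $a$ (relative to $\alpha$), one has $X_\alpha^a X_{-\alpha}^a v_{\lambda_j}=c\, v_{\lambda_j}$ with $c=(a!)^2\binom{a}{a}\neq 0$ — more precisely $c=\prod_{k=1}^{a}k(a-k+1)=(a!)^2$, a nonzero scalar. Therefore $X_\alpha^a v_{s_\alpha*\gamma}=X_\alpha^a X_{-\alpha}^a v_{\lambda_j}=(a!)^2 v_{\lambda_j}\neq 0$ in $V(\lambda_j)\subset V$, which is exactly Assertion~(\ref{2tobeproved}) with $\lambda_i=\lambda_j$. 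One small check is that $a>0$: this holds because $\alpha$ is in the support of the cocycle and $\alpha\notin S^p(\Gamma)$ by the discussion following the definition of $\varphi_{\alpha,\gamma}$, so $\alpha$ is not orthogonal to $\lambda_j$, i.e.\ $a=(\lambda_j,\alpha^\vee)\ge 1$; if $a=0$ then $r=0$, $v_{s_\alpha*\gamma}=v_{\lambda_j}$, and the statement reduces to the trivial observation $X_\alpha^0 v_{\lambda_j}=v_{\lambda_j}\neq 0$, so there is nothing to prove.

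I expect the only genuine subtlety — and the step I would write out most carefully — to be the identification $r=a$, i.e.\ that the string $v_{\lambda_j}, X_{-\alpha}v_{\lambda_j},\dots$ has length exactly $(\lambda_j,\alpha^\vee)+1$. This is immediate once one observes that the subalgebra $\mathfrak{sl}_2(\alpha)=\langle X_\alpha,X_{-\alpha},\alpha^\vee\rangle$ acts on $V(\lambda_j)$ and $v_{\lambda_j}$ is a highest weight vector for it of $\alpha^\vee$-weight $(\lambda_j,\alpha^\vee)$; the cyclic $\mathfrak{sl}_2(\alpha)$-submodule it generates is irreducible of that highest weight, so the $\alpha$-string through $v_{\lambda_j}$ terminates at step $(\lambda_j,\alpha^\vee)$ and $X_{-\alpha}^{a}v_{\lambda_j}$ is its lowest weight vector, nonzero. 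Everything else is the textbook computation of the action of $X_\alpha^a X_{-\alpha}^a$ on a lowest weight vector, which I would not expand in full but simply cite as standard $\mathfrak{sl}_2$-theory.
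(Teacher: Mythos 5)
Your $\mathfrak{sl}_2$-computation is correct and it does prove the displayed inequality~(\ref{2tobeproved}) with $\lambda_i=\lambda_j$ for every $r\geq 0$: the identification $r=(\lambda_j,\alpha^\vee)$ is right, and $X_\alpha^{a}X_{-\alpha}^{a}v_{\lambda_j}=(a!)^2v_{\lambda_j}\neq 0$ is standard. This is cleaner than what the paper writes for that half of the claim. However, the paper's own proof is doing something your argument does not, and the discrepancy matters. Inside the proof of Proposition~\ref{triviality}, Assertion~(\ref{2tobeproved}) is only useful for a $\lambda_i$ such that the product $v_{s_\alpha*\gamma}\cdot v_{\lambda_i}$ is already \emph{non-trivial} in $S^2V/V(2\underline\lambda)$; that non-vanishing is the implicit first half of the lemma's job. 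This is exactly why the paper splits into $r>1$ and $r=1$: for $r=1$ the weight space of $S^2V(\lambda_j)$ of weight $2\lambda_j-\alpha$ is one-dimensional, spanned by $X_{-\alpha}v_{\lambda_j}\cdot v_{\lambda_j}=\tfrac12 X_{-\alpha}(v_{\lambda_j}\cdot v_{\lambda_j})$, hence lies entirely in the Cartan component $V(2\lambda_j)$ and vanishes in the quotient. So with $\lambda_i=\lambda_j$ and $r=1$ your conclusion, though literally true, gives nothing; the paper instead invokes the earlier lemma (a representative of the form $X_{-\alpha}^r v_{\lambda_j}$ with a unique weight non-orthogonal to $\alpha$ forces $r=2$ or $4$, so $r=1$ forces a second weight $\lambda_i\neq\lambda_j$ non-orthogonal to $\alpha$, which by Proposition~\ref{non-rigidity} can be taken fundamental) and verifies~(\ref{2tobeproved}) together with the non-vanishing of $X_{-\alpha}v_{\lambda_j}\cdot v_{\lambda_i}$ for that different $\lambda_i$. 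You should either add the non-triviality of $v_{s_\alpha*\gamma}\cdot v_{\lambda_i}$ in $S^2V/V(2\underline\lambda)$ to what you prove (treating $r=1$ separately as the paper does), or note explicitly that your argument only covers the displayed inequality and defer the non-vanishing of the product to the surrounding proof.
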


\begin{proof}
Note that $r=(\lambda_j,\alpha^\vee)$.
Hence if $r>1$ then $X_{-\alpha}^r v_{\lambda_j}. v_{\lambda_j}\neq 0$ and Assertion~(\ref{2tobeproved}) is clear.
If $r=1$ then there exists $\lambda_i\neq \lambda_j$ which is not orthogonal to $\alpha$.
But by Proposition~\ref{non-rigidity} this implies that such a $\lambda_i$ can be chosen to be fundamental.
Assertion~(\ref{2tobeproved}) follows with that chosen $\lambda_i$. 
\end{proof}

\begin{lemma}\label{lemma2injectivity}
Let $v_{s_\alpha*\gamma}\cdot v_{\lambda_i}\neq 0$ in $\mathop S^2V/V(2\underline\lambda)$ with $\gamma\in \Sigma(\Gamma)$.
If $\lambda_i$ is orthogonal to $\gamma$  then Assertion~(\ref{2tobeproved}) holds.
\end{lemma}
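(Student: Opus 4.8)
The plan is to reduce Assertion~(\ref{2tobeproved}) to a numerical inequality about the $\alpha$-coroot and then to verify it from the hypotheses. First I would use that the whole statement only concerns the $\mathfrak{sl}_2$-triple $(X_\alpha,\alpha^\vee,X_{-\alpha})$. Write $v_\gamma\in V(\lambda_j)_{\lambda_j-\gamma}$ for the chosen representative and recall from the proof of Proposition~\ref{characterisationofH^1} that $v_{s_\alpha*\gamma}=X_{-\alpha}^{r}v_\gamma$ with $r=\max\{i\geq0:X_{-\alpha}^{i}v_\gamma\neq0\text{ in }V\}$. By maximality of $r$ we have $X_{-\alpha}v_{s_\alpha*\gamma}=0$, so in the decomposition of $V(\lambda_j)$ into irreducible $\mathfrak{sl}_2(\alpha)$-submodules the vector $v_{s_\alpha*\gamma}$ is a sum of lowest weight vectors; they all share the $\alpha$-weight $m:=(\lambda_j-\gamma,\alpha^\vee)-2r\leq0$, hence each spans the bottom of a string of length $-m$. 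The representation theory of $\mathfrak{sl}_2$ then gives $X_\alpha^{a}v_{s_\alpha*\gamma}\neq0$ in $V$ if and only if $a\leq-m$, where $a=(\lambda_i,\alpha^\vee)$. So Assertion~(\ref{2tobeproved}) is equivalent to the inequality $(\lambda_i,\alpha^\vee)+(\lambda_j,\alpha^\vee)\leq(\gamma,\alpha^\vee)+2r$.

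Next I would make two harmless reductions. Since $\lambda_i$ is dominant and $(\lambda_i,\gamma)=0$, every simple root in $\supp\gamma$ is orthogonal to $\lambda_i$; in particular the conclusion is immediate if $\alpha\in\supp\gamma$ (then $a=0$), and also if $a=(\lambda_i,\alpha^\vee)=0$. So I may assume $\alpha\notin\supp\gamma$ and $a\geq1$; by Proposition~\ref{characterisationofH^1} this gives $r=-(\gamma,\alpha^\vee)\geq0$, and the inequality to prove simplifies to
$$
(\lambda_i,\alpha^\vee)+(\lambda_j,\alpha^\vee)\leq-(\gamma,\alpha^\vee)=r .
$$
The hypothesis $v_{s_\alpha*\gamma}\cdot v_{\lambda_i}\neq0$ in $S^2V/V(2\underline\lambda)$ enters here precisely to exclude $r=0$: if $r=0$ then $v_{s_\alpha*\gamma}=v_\gamma$ has weight $\lambda_j-\gamma$ with $\supp\gamma$ orthogonal to $\lambda_i$, so $v_{\lambda_i}$ is fixed by the connected semisimple subgroup $L'$ with root system $\supp\gamma$; using Lemma~\ref{weightvector} to reduce to the case where $v_\gamma$ lies in the $L'$-submodule of $V(\lambda_j)$ generated by $v_{\lambda_j}$, one checks that $v_\gamma\cdot v_{\lambda_i}$ lands in the Cartan component $V(\lambda_i+\lambda_j)$ (resp. $V(2\lambda_i)$), a contradiction. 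Hence $r\geq1$, i.e. $\alpha$ is adjacent to $\supp\gamma$.

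It then remains to establish $(\lambda_i,\alpha^\vee)+(\lambda_j,\alpha^\vee)\leq r$ when $\alpha\notin\supp\gamma$ is adjacent to $\supp\gamma$ and $(\lambda_i,\alpha^\vee)\geq1$. Here $r=-(\gamma,\alpha^\vee)=-n_{\delta_0}(\delta_0,\alpha^\vee)$ is read off from the shape of $\gamma$ in Table~\ref{sphericalroots}, for the simple root $\delta_0\in\supp\gamma$ adjacent to $\alpha$, while the possible values of $(\lambda_i,\alpha^\vee)$ and $(\lambda_j,\alpha^\vee)$ are pinned down by the description of admissible weight tuples in Proposition~\ref{non-rigidity}: outside a few exceptions at most one of the weights is non-orthogonal to $\alpha$, forcing $(\lambda_j,\alpha^\vee)=0$ (unless $j=i$) and $(\lambda_i,\alpha^\vee)\leq2$. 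The hard part will be exactly this case-check. The delicate points I anticipate are: (i) the possibility $j=i$, where one needs the stronger bound $2(\lambda_i,\alpha^\vee)\leq r$ — here one checks that $\lambda_i-\gamma$ would fail to be a weight of $V(\lambda_i)$ when $r$ is small, so this case does not occur; (ii) the configurations in which two distinct weights are simultaneously non-orthogonal to $\alpha$, which by Proposition~\ref{non-rigidity} only happen for weights of the form $\omega_\alpha+\omega_\beta$, forcing $\Sigma(\Gamma)$ to be a singleton and thereby severely restricting $\gamma$; and (iii) the ``long'' spherical roots in types $\mathsf B$, $\mathsf C$, $\mathsf D$, $\mathsf F_4$ and $\mathsf G_2$, where $n_{\delta_0}\geq2$ or $(\delta_0,\alpha^\vee)\in\{-2,-3\}$, so that $r$ is large enough for the inequality to persist. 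In each instance the coroot pairings are computed directly from the Dynkin data and the inequality follows.
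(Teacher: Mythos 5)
Your reductions track the paper's own argument quite closely: you correctly show that $\alpha\notin\supp\gamma$ (because $\lambda_i$, being dominant and orthogonal to $\gamma$, is orthogonal to all of $\supp\gamma$, while the nonvanishing of $v_{s_\alpha*\gamma}\cdot v_{\lambda_i}$ forces $(\lambda_i,\alpha^\vee)>0$), you translate Assertion~(\ref{2tobeproved}) into the $\mathfrak{sl}_2(\alpha)$-weight inequality $(\lambda_i,\alpha^\vee)\leq r$, and your Cartan-component argument for excluding $r=0$ is a legitimate substitute for the paper's step showing $v_{s_\alpha*\gamma}\neq v_\gamma$ and $(\gamma,\alpha)<0$. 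Up to that point the two proofs are essentially parallel.

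The gap is in the final step, which is the actual content of the lemma. You defer the inequality $(\lambda_i,\alpha^\vee)\leq-(\gamma,\alpha^\vee)$ to an unexecuted case-check over Table~\ref{sphericalroots}, and that check cannot succeed with the inputs you allow yourself: the table bounds $-(\gamma,\alpha^\vee)$ for $\alpha$ adjacent to $\supp\gamma$ (often it equals $1$), but nothing in the table or in your reading of Proposition~\ref{non-rigidity} caps $(\lambda_i,\alpha^\vee)$ at that value --- the proposition explicitly allows $\lambda_i=a\lambda_D$ with $a>1$, so your asserted bound $(\lambda_i,\alpha^\vee)\leq2$ is neither justified nor sufficient. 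The ingredient you are missing is the one the paper uses to finish in one line: $\gamma\in\mathbb Z\Gamma$, and by Proposition~\ref{non-rigidity} $\lambda_i$ is the \emph{unique} weight among the $\lambda_k$'s not orthogonal to $\alpha$, so $(\gamma,\alpha^\vee)=m_i(\lambda_i,\alpha^\vee)$ for an integer $m_i$; since $(\gamma,\alpha^\vee)<0$ and $(\lambda_i,\alpha^\vee)>0$ one gets $m_i\leq-1$ and hence $r=-(\gamma,\alpha^\vee)\geq(\lambda_i,\alpha^\vee)$ with no case analysis at all. (This integrality also disposes of your worry (i), the case $j=i$, since $\lambda_j$ is non-orthogonal to $\gamma$ while $\lambda_i$ is orthogonal to it, so $\lambda_j\neq\lambda_i$ and $(\lambda_j,\alpha^\vee)=0$.) Without importing this argument, your plan as written does not close.
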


\begin{proof}
Note first that the support of $\gamma$ does not contain $\alpha$.
Indeed $\lambda_i$ being orthogonal to $\gamma$ it can not be orthogonal to $\alpha$ otherwise $v_{s_\alpha*\gamma}\cdot v_{\lambda_i}$ will be $0$.
Hence $(\gamma,\alpha^\vee)\leq 0$ and further $X_\alpha v_\gamma=0$ in $V$.
It follows in turn that $v_{s_\alpha*\gamma}=X_{-\alpha}^r v_\gamma$ with $r=(\lambda-\gamma,\alpha^\vee)$ and $v_\gamma\in V(\lambda)$.
Further, since $v_{s_\alpha*\gamma}.v_{\lambda_i}\neq 0$ and $(\lambda_i,\gamma)=0$, we have:
$v_{s_\alpha*\gamma}\neq v_\gamma$.

The weight $\lambda$ being non-orthogonal to $\gamma$, it is different from $\lambda_i$.

Assume that $(\gamma,\alpha)=0$ then since $\alpha$ does not belong to the support of $\gamma$, 
it has to be orthogonal to every simple root lying in the support of $\gamma$.
Let $\delta$ be a simple root $\delta$ in the support of $\gamma$ such that $X_\delta v_\gamma\neq 0$ in $V$.
Considering $X_\delta\varphi(X_\alpha)$ along with $(\lambda,\alpha)$, one ends up with a contradiction.

We deduce that $(\gamma,\alpha)<0$.
Thanks to Proposition~\ref{non-rigidity}, $\lambda_i$ is the single weight among the $\lambda_j$'s which is not orthogonal to $\alpha$.
Recall that $\gamma$ belongs to $\mathbb Z\Gamma$.
It follows that $r=(\lambda-\gamma,\alpha^\vee)\geq (\lambda_i,\alpha^\vee)$ whence the lemma.
\end{proof}

\begin{lemma}
Suppose $(\gamma,\alpha^\vee)>1$ with $\gamma\in \Sigma(\Gamma)$.
Then Assertion~(\ref{2tobeproved}) holds with $\lambda_i$ such that $v_\gamma\in V(\lambda_i)$.
\end{lemma}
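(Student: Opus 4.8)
The goal is to verify Assertion~(\ref{2tobeproved}), namely that $X_\alpha^a v_{s_\alpha*\gamma}\neq 0$ in $V$ for $a=(\lambda_i,\alpha^\vee)$, in the case $(\gamma,\alpha^\vee)>1$ with $\gamma\in\Sigma(\Gamma)$ and $v_\gamma\in V(\lambda_i)$. The plan is to exploit the fact that $(\gamma,\alpha^\vee)>1$ forces, via Table~\ref{sphericalroots} together with Lemma~\ref{rootsupport} and Proposition~\ref{non-rigidity}, a very restricted list of possibilities for the pair $(\gamma,\alpha)$. First I would observe that whenever $\alpha\notin\supp\gamma$ one has $v_{s_\alpha*\gamma}=X_{-\alpha}^r v_\gamma$ with $r=(\lambda_i-\gamma,\alpha^\vee)$ by the definition of $r$ in Proposition~\ref{characterisationofH^1}, while $(\gamma,\alpha^\vee)>1$ is incompatible with $\alpha\notin\supp\gamma$ because Table~\ref{sphericalroots} shows $(\gamma,\alpha^\vee)\in\{0,-1,-2\}$ for simple roots outside the support of a spherical root; hence necessarily $\alpha\in\supp\gamma$ and $r=0$, so $v_{s_\alpha*\gamma}=v_\gamma\in V(\lambda_i)_{\lambda_i-\gamma}$.

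The core step is then a weight computation inside the single irreducible module $V(\lambda_i)$: I must show that $X_\alpha^a v_\gamma\neq 0$ where $a=(\lambda_i,\alpha^\vee)$. Since $v_\gamma$ has weight $\lambda_i-\gamma$, the vector $X_\alpha^a v_\gamma$ has weight $\lambda_i-\gamma+a\alpha$. I would argue by comparing the $\alpha$-string through $v_\gamma$: the $\mathfrak{sl}_2$ generated by $X_{\pm\alpha}$ acts on $v_\gamma$, and since $(\gamma,\alpha^\vee)>1$ the weight $\lambda_i-\gamma$ pairs with $\alpha^\vee$ to give $(\lambda_i-\gamma,\alpha^\vee)=a-(\gamma,\alpha^\vee)<a-1$, so $v_\gamma$ lies well below the top of its $\alpha$-string and can be raised at least $a$ times provided the string is long enough. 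The length of the $\alpha$-string is controlled by how far up one can go while staying among the weights of $V(\lambda_i)$; I would check, case by case along Table~\ref{sphericalroots} for the finitely many $(\gamma,\alpha)$ with $(\gamma,\alpha^\vee)=2$ (these are the $\mathsf A_1$ type $\gamma=2\alpha$, and the cases where $\alpha$ is an endpoint of a $\mathsf B_r$, $\mathsf C_r$, $\mathsf D_r$, $\mathsf F_4$ or $\mathsf G_2$ support with coefficient giving pairing $2$), that $\lambda_i-\gamma+a\alpha$ is again a weight of $V(\lambda_i)$, which combined with the fact that $v_\gamma$ generates (under the Borel) a submodule reaching the highest weight line forces $X_\alpha^a v_\gamma\neq 0$. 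Concretely, since $v_\gamma$ is a nonzero vector of weight $\lambda_i-\gamma$ and $\gamma$ is a sum of simple roots of $G$, repeated application of raising operators must eventually reach $v_{\lambda_i}$; the point is that it can be reached through the particular chain that raises by $\alpha$ first $a$ times, which is guaranteed because at each intermediate weight the relevant $\mathfrak{sl}_2(\alpha)$-string does not terminate prematurely — this uses $(\gamma,\alpha^\vee)>1$ precisely to ensure the intermediate weights $\lambda_i-\gamma+k\alpha$ for $0\le k\le a$ all satisfy the dominant-chamber string inequalities.

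The main obstacle I anticipate is the last point: controlling that $X_\alpha$ can be applied $a=(\lambda_i,\alpha^\vee)$ times without hitting zero, rather than being blocked after fewer steps by the end of the $\alpha$-string through the specific vector $v_\gamma$ (as opposed to through an arbitrary weight vector of that weight). I would handle this by invoking that $v_\gamma$ is, up to the $G_{v_{\underline\lambda}}$-invariance, the distinguished vector coming from $\Sigma(\Gamma)$, so its image in $V/\mathfrak g.v_{\underline\lambda}$ being nonzero pins down which component of the weight space $V(\lambda_i)_{\lambda_i-\gamma}$ it lies in; then the $\alpha$-string length through $v_\gamma$ equals the full combinatorial length $(\gamma,\alpha^\vee)+(\text{distance from }\lambda_i)$, and since $(\gamma,\alpha^\vee)\ge 2$ this is at least $a$. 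Once Assertion~(\ref{2tobeproved}) is established, the proof of Proposition~\ref{triviality} is complete in this case, matching the structure of Lemmas~\ref{lemma1injectivity} and~\ref{lemma2injectivity}; the remaining sub-cases of $(\gamma,\alpha^\vee)\le 1$ with $(\lambda_i,\gamma)\ne 0$ are then the only ones left, and are treated by the complementary assertion~(\ref{1tobeproved}) exactly as in the $\lambda_i\perp\alpha$ situation already handled in the proof of Proposition~\ref{triviality}.
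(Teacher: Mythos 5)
There is a genuine gap, and it is located exactly where you flag your "main obstacle". You set $r=0$ (quoting the statement of Proposition~\ref{characterisationofH^1}) and hence identify $v_{s_\alpha*\gamma}$ with $v_\gamma$, reducing the lemma to the claim $X_\alpha^a v_\gamma\neq 0$ with $a=(\lambda_i,\alpha^\vee)$. But the operative definition in this part of the paper is $v_{s_\alpha*\gamma}=X_{-\alpha}^r v_\gamma$ with $r=\mathrm{max}\{i:X_{-\alpha}^iv_\gamma\neq 0\}$ (this is what makes the identity $X_{-\alpha}v_{s_\alpha*\gamma}=0$, used to verify Assertion~(\ref{1tobeproved}), true), and with that reading your reduced claim is simply false when $a=2$. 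Concretely: take $G$ of type $\mathsf B_n$, $\lambda_i=2\omega_1$, $\gamma=2\alpha_1+\ldots+2\alpha_n$, $\alpha=\alpha_1$. Then $v_\gamma$ has $\alpha$-weight $a-(\gamma,\alpha^\vee)=0$ and $\lambda_i-\gamma+2\alpha$ is not a weight of $V(\lambda_i)$, so $X_\alpha^2v_\gamma=0$. Your heuristic that $v_\gamma$ "lies well below the top of its $\alpha$-string and can be raised at least $a$ times provided the string is long enough" conflates the length of the string of \emph{weights} with the position of the particular \emph{vector} $v_\gamma$ inside its $\mathfrak{sl}_2(\alpha)$-isotypic component; a vector of $\alpha$-weight $a-2$ killed by $X_\alpha^2$ can be raised exactly once, no matter how long the weight string is.

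The paper's proof goes the other way around: after disposing of $\gamma=2\alpha$ via Lemma~\ref{lemma2injectivity}'s companion (Lemma~\ref{lemma1injectivity}), it reads off from Table~\ref{sphericalroots} that $(\gamma,\alpha^\vee)=2$, that $\gamma-\delta$ is a root only for $\delta=\alpha$, and that $\gamma-2\alpha$ is not a root. This gives $X_\delta v_\gamma=0$ for $\delta\neq\alpha$, hence $X_\alpha v_\gamma\neq 0$ (else $v_\gamma$ would be a highest weight vector), and $X_\alpha^2v_\gamma=0$. So $v_\gamma$ sits one step below the top of an $(a+1)$-dimensional $\mathfrak{sl}_2(\alpha)$-string, $r=a-1$, $v_{s_\alpha*\gamma}$ is the \emph{bottom} of that string ($v_\gamma$ if $a=1$, $X_{-\alpha}v_\gamma$ if $a=2$), and $X_\alpha^a v_{s_\alpha*\gamma}\neq 0$ follows from $\mathfrak{sl}_2$-theory. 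To repair your argument you would need to (i) use the maximal-$r$ normalization of $v_{s_\alpha*\gamma}$, and (ii) replace the "string long enough" step by the explicit computation of $X_\alpha v_\gamma\neq 0$ and $X_\alpha^2 v_\gamma=0$ from the root-string data of Table~\ref{sphericalroots}.
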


\begin{proof}
If $\gamma=2\alpha$, we fall in the case of Lemma~\ref{lemma1injectivity}.

By Table~\ref{sphericalroots}, the weight $\gamma$ under consideration is such that $(\gamma,\alpha^\vee)=2$.
Further $\gamma-\delta$ is a root with $\delta$ a simple root if and only if $\delta$ equals $\alpha$. 
Hence $X_\delta v_\gamma=0$ in $V$ for all simple roots $\delta$ distinct to $\alpha$ and a fortiori $X_\alpha v_\gamma\neq 0$ in $V$.
Moreover $\gamma-2\alpha$ not being a root, we have: $X_{\alpha}^2 v_\gamma=0$ in $V$.
Since $(\lambda_i,\alpha^\vee)$ equals $1$ or $2$, we have respectively $v_{s_\alpha*\gamma}$ equals $v_\gamma$ or $X_{-\alpha} v_\gamma$.
The lemma follows readily. 
\end{proof}

\begin{lemma}
Let $\gamma\in \Sigma(\Gamma)$ and $(\gamma,\alpha^\vee)=1$ for some simple root $\alpha$.
Then Assertion~(\ref{2tobeproved}) holds.
\end{lemma}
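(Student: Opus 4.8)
The plan is to reduce the statement to its essential configuration and then carry out a short finite check against Table~\ref{sphericalroots}.

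I would begin with two reductions. Since $(\delta,\alpha^\vee)\le 0$ for every simple root $\delta\ne\alpha$, the hypothesis $(\gamma,\alpha^\vee)=1$ forces $\alpha\in\supp\gamma$; hence $r=0$ in Proposition~\ref{characterisationofH^1} and we may take $v_{s_\alpha*\gamma}=v_\gamma$, of $T_\ad$-weight $\gamma$. By Lemma~\ref{lemma2injectivity} we may assume $(\lambda_i,\gamma)\ne0$, so that $(\gamma,\alpha)\ge0$ by Proposition~\ref{non-rigidity}. Recall that, by the reduction made in the proof of Proposition~\ref{triviality}, it is enough to exhibit one index $\lambda_i$ for which $v_\gamma$ has a nonzero component in $V(\lambda_i)$, the product $v_\gamma\cdot v_{\lambda_i}$ is non-trivial in $S^2V/V(2\underline\lambda)$, and either $\lambda_i\perp\alpha$ (which gives Assertion~(\ref{1tobeproved}) at once) or $X_\alpha^{\,a}v_\gamma\ne0$ in $V$ with $a=(\lambda_i,\alpha^\vee)\ge1$ (which gives Assertion~(\ref{2tobeproved})).

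Next I would enumerate, from Table~\ref{sphericalroots}, the spherical roots $\gamma$ admitting a simple root $\alpha\in\supp\gamma$ with $(\gamma,\alpha^\vee)=1$: up to the combinatorics of the support these are $\gamma=\alpha_1+\dots+\alpha_n$ with $\supp\gamma$ of type $\mathsf A_n$ ($\alpha$ an end node) or of type $\mathsf B_n$ ($\alpha=\alpha_1$), the root $\alpha_1+2\alpha_2+\dots+2\alpha_{n-1}+\alpha_n$ of type $\mathsf C_n$ ($\alpha=\alpha_2$), the root $\alpha_1+2\alpha_2+3\alpha_3+2\alpha_4$ of type $\mathsf F_4$ ($\alpha=\alpha_4$), and $\gamma=\alpha_1+\alpha_2$ of type $\mathsf G_2$ ($\alpha=\alpha_2$); in each case $\gamma$ is itself a root, $\gamma-\alpha$ is a root, and $\gamma-\delta$ is not a root for the interior nodes $\delta$ of $\supp\gamma$. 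Proposition~\ref{supportofGamma} then heavily restricts which $\lambda_k$ can fail to be orthogonal to the nodes of $\supp\gamma$, and Proposition~\ref{non-rigidity} forces the relevant $\lambda_k$ to be a color $\omega_\delta$, $2\omega_\delta$ or $\omega_\alpha+\omega_\beta$ (or one of the auxiliary weights listed there). For each line of the Table I would then describe $v_\gamma$ through Lemma~\ref{weightvector} and evaluate $X_\alpha^{\,a}v_\gamma$ in the corresponding (small) irreducible module: as $(\lambda_i-\gamma,\alpha^\vee)=a-1\ge0$, the weight $\lambda_i-\gamma$ sits on the dominant side of its $\alpha$-string, so $X_\alpha^{\,a-1}v_\gamma$ is automatically nonzero and $X_\alpha^{\,a}v_\gamma\ne0$ exactly when $X_{-\alpha}v_\gamma\ne0$ in $V(\lambda_i)$ --- which holds whenever $a\ge2$, and fails only when $a=1$ and the component $v_\gamma^i$ spans a singleton $\alpha$-string (as happens for the standard- or vector-type color $\omega_\alpha$). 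In that remaining situation Proposition~\ref{non-rigidity} guarantees a second component of $v_\gamma$, coming from another color, and one instead tests $v_\gamma$ against a color $\lambda_j$ orthogonal to $\alpha$; then $a=0$, so Assertion~(\ref{2tobeproved}) reduces to the triviality $v_\gamma\ne0$, while the explicit decomposition of the two relevant tensor products $V(\lambda_i)\otimes V(\lambda_j)$ shows that $v_\gamma\cdot v_{\lambda_j}$ is still non-trivial in $S^2V/V(2\underline\lambda)$. The other types of Table~\ref{sphericalroots} are dispatched by the same computation, which completes the proof of Proposition~\ref{triviality} and hence of Theorem~\ref{smoothness}.

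The main obstacle is exactly this last dichotomy: for the ``end-node'' colors the vector $X_\alpha^{\,a}v_\gamma$ need not survive inside a single irreducible summand of $V$, so the argument must fall back either on a further component of $v_\gamma$ or on a different choice of testing weight $\lambda_i$ --- and in the latter case one still has to certify that $v_\gamma\cdot v_{\lambda_i}$ does not collapse entirely into $V(2\underline\lambda)$. The control needed comes from Proposition~\ref{non-rigidity}, which bounds how many (and pins down which) $\lambda_i$ can be non-orthogonal to $\alpha$ and, when $a=1$, forces the relevant $\lambda_i$ to be a color of fundamental type at $\alpha$, together with the description of the kernels $K_{i,j}$ for the few minuscule- or vector-type representations that actually occur; granting these, the verification along each row of the Table is mechanical.
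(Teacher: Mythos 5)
Your overall strategy (enumerate the rows of Table~\ref{sphericalroots} with $(\gamma,\alpha^\vee)=1$ and run an $\mathfrak{sl}_2$-string analysis in each case) is quite different from the paper's, which gives a short uniform argument: by Proposition~\ref{non-rigidity} there is a \emph{unique} $\lambda$ among the $\lambda_i$ non-orthogonal to $\alpha$, necessarily with $(\lambda,\alpha^\vee)=1$; one chooses $v_\gamma\in V(\lambda)$, and if $X_\alpha v_\gamma=0$ one \emph{changes the representative} of the class $[v_\gamma]$ to $X_{-\gamma}v_{\lambda_j}$ for some $\lambda_j\neq\lambda$ with $(\lambda_j,\gamma-\alpha)\neq 0$ (this exists by \cite{Js} and Lemma~\ref{weightvector}); since $\gamma-\alpha$ is a root, $X_\alpha$ does not kill that representative, and Assertion~(\ref{2tobeproved}) is verified with $\lambda_i=\lambda$ and $a=1$. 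The difference is not merely one of length: it is exactly at the point where your argument breaks down.

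The gap is in your ``remaining situation'' (the component $v_\gamma^i$ in the distinguished $V(\lambda_i)$ spans a singleton $\alpha$-string). There you keep the representative and instead test against a color $\lambda_j$ orthogonal to $\alpha$, so that $a=(\lambda_j,\alpha^\vee)=0$ and Assertion~(\ref{2tobeproved}) ``reduces to $v_\gamma\neq 0$.'' But with $a=0$ Assertion~(\ref{2tobeproved}) is vacuous and carries no information about whether $v_{s_\alpha*\gamma}\cdot v_{\lambda_j}$ lies in the image of $X_\alpha$; if this degenerate reading were admissible, essentially every case of Proposition~\ref{triviality} would follow at once from the existence of one orthogonal test weight with non-trivial product, and the paper's remaining lemmas (which laboriously verify Assertion~(\ref{1tobeproved}) precisely when the test weight is orthogonal to $\alpha$) would be pointless. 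The correct move in this subcase is the paper's: exploit the freedom in the representative of $[v_\gamma]$ so that $X_\alpha v_\gamma\neq 0$ holds in $V$ (via a component in another $V(\lambda_j)$ with $(\lambda_j,\gamma-\alpha)\neq 0$), and still test against the non-orthogonal $\lambda$ with $a=1$. A secondary issue: your claim that $X_\alpha^{a}v_\gamma\neq 0$ is equivalent to $X_{-\alpha}v_\gamma\neq 0$, and hence automatic for $a\geq 2$, is false as an $\mathfrak{sl}_2$ statement (the equivalence only holds when the $H_\alpha$-weight $(\lambda_i-\gamma,\alpha^\vee)=a-1$ is zero, i.e. $a=1$); the branch $a\geq 2$ happens to be empty here, because $\gamma\in\mathbb Z\Gamma$, uniqueness of the non-orthogonal $\lambda$ and $(\gamma,\alpha^\vee)=1$ force $(\lambda,\alpha^\vee)=1$, but you neither observe this nor justify that branch correctly.
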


\begin{proof}
Note that $\gamma-\alpha$ is a root; \textsl{see} Table~\ref{sphericalroots}.
By Proposition~\ref{non-rigidity}, there exists a unique $\lambda$ non-orthogonal to $\alpha$ and $(\lambda,\alpha^\vee)=1$.
Further $v_\gamma$ can be chosen in $V(\lambda)$.
If $X_\alpha v_\gamma=0$ in $V$, it follows from ~\cite{Js} and Lemma~\ref{weightvector} that $[v_\gamma]=[X_{-\gamma} v_{\lambda}]=[X_{-\gamma} v_{\lambda_j}]$
for some $\lambda_j\neq \lambda$ and such that $(\lambda_j,\gamma-\alpha)\neq 0$.
In particular $X_\alpha v_\gamma\neq 0$ in $V$ for $v_\gamma=X_{-\gamma}v_{\lambda_j}$. 
Then $v_\gamma$ can be chosen such that $v_\gamma\in V(\lambda_k)$ with  $X_\alpha v_\gamma\neq 0$ in $V$ and $\lambda_k=\lambda$ or $\lambda_j$ as above.
Assertion~(\ref{2tobeproved}) thus holds with $\lambda_i=\lambda$.
\end {proof}

\begin{lemma}
Let $\alpha$ be a simple root non-orthogonal to the monoid $\Gamma$ ($\alpha\not\in S^p(\Gamma)$).
Suppose $(\gamma,\alpha)=0$ then Assertion~(\ref{1tobeproved}) or Assertion~(\ref{2tobeproved}) holds.
\end{lemma}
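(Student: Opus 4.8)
\textit{Plan.} The plan is to establish Assertion~(\ref{1tobeproved}) or~(\ref{2tobeproved}) through a dichotomy according to whether a factor $V(\lambda_i)$ orthogonal to $\alpha$ can be used. First, since $(\gamma,\alpha)=0$ we have $s_\alpha\gamma=\gamma$, so the exponent $r$ attached to $\varphi_{\alpha,\gamma}$ in Proposition~\ref{characterisationofH^1} vanishes and $v_{s_\alpha*\gamma}=v_\gamma$; fix a representative $v_\gamma\in V(\lambda_j)_{\lambda_j-\gamma}$, whose $\alpha$-weight inside $V(\lambda_j)$ equals $(\lambda_j,\alpha^\vee)$ because $(\gamma,\alpha^\vee)=0$. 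Since $\alpha\notin S^p(\Gamma)$, there is at least one factor $\lambda_i$ with $a:=(\lambda_i,\alpha^\vee)>0$. If there happens to be some $\lambda_i$ orthogonal to $\alpha$ with $v_\gamma\cdot v_{\lambda_i}\neq 0$ in $S^2V/V(2\underline\lambda)$, then $X_{-\alpha}v_{\lambda_i}=0$ yields $X_{-\alpha}(v_\gamma\cdot v_{\lambda_i})=v_\gamma\cdot X_{-\alpha}v_{\lambda_i}=0$, which is Assertion~(\ref{1tobeproved}); the only point to check here is that $v_\gamma\otimes v_{\lambda_i}$ is not entirely contained in the Cartan summand $V(\lambda_i+\lambda_j)$ of $V(\lambda_j)\otimes V(\lambda_i)$, and by Kostant's description of this summand together with the restrictions on the $\lambda_i$'s coming from Proposition~\ref{non-rigidity}, this fails only when $v_\gamma$ is $U$-conjugate to $v_{\lambda_j}$, i.e. $\gamma=0$, which is excluded.

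In the complementary situation, no factor orthogonal to $\alpha$ is usable and one instead picks $\lambda_i$ with $a=(\lambda_i,\alpha^\vee)>0$ and proves Assertion~(\ref{2tobeproved}), $X_\alpha^a v_\gamma\neq 0$ in $V$, splitting on whether $\alpha\in\supp\gamma$. If $\alpha\in\supp\gamma$, then $(\gamma,\alpha)=0$ together with $\alpha\notin S^p(\Gamma)$ forces $\gamma-\alpha$ to be a root, by Lemma~\ref{rootsupport}; going through the (very few) entries of Table~\ref{sphericalroots} that carry a simple root orthogonal to $\gamma$ in their support with $\gamma-\alpha$ a root -- essentially the supports of type $\mathsf B_n$ and $\mathsf C_n$ with $\alpha$ suitably placed at an end of the Dynkin diagram, plus possibly one small-rank exception -- one checks in each case that a representative $v_\gamma$ can be taken in a $V(\lambda_j)$ for which the $\alpha$-string through $v_\gamma$ extends at least $a$ steps upward, so that $X_\alpha^a v_\gamma\neq 0$, and that the resulting degree-two product still survives in $S^2V/V(2\underline\lambda)$. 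If $\alpha\notin\supp\gamma$, then for a suitable representative either $X_\alpha v_\gamma\neq 0$ in $V$, in which case -- after transporting $v_\gamma$ into the $V(\lambda_k)$ not orthogonal to $\alpha$ and bounding its coefficients via Proposition~\ref{non-rigidity} -- one gets $X_\alpha^a v_\gamma\neq 0$; or $X_\alpha v_\gamma=0$ for every representative, in which case Lemma~\ref{invariancecondition} places $[v_\gamma]$ in $(V/\mathfrak g.v_{\underline\lambda})^{G_{v_{\underline\lambda}}}$ and then, by Proposition~\ref{non-rigidity} and Proposition~\ref{supportofGamma}, a factor $\lambda_i$ orthogonal to $\alpha$ with $v_\gamma\cdot v_{\lambda_i}\neq 0$ exists after all, so one is back in the first situation.

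The main obstacle is the second step: the explicit $\alpha$-string bookkeeping inside $V(\lambda_j)$, and, in both steps, the verification that the chosen degree-two product is not swallowed by the Cartan summand $V(2\underline\lambda)$. This is precisely where the rigidity constraints of Proposition~\ref{non-rigidity} on the $\lambda_i$'s -- that they are of color type, with $(\lambda_i,\alpha^\vee)\leq 2$ and orthogonal to most of $\supp\gamma$ -- become indispensable, reducing the whole argument to a short finite inspection of Table~\ref{sphericalroots}.
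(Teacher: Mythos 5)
There is a genuine gap, and it originates in your very first step. You take $v_{s_\alpha*\gamma}=v_\gamma$ on the grounds that the exponent $r$ in Proposition~\ref{characterisationofH^1} vanishes when $(\gamma,\alpha)=0$. That is a literal reading of the statement of that proposition, but it is not the representative the paper actually works with: in the proof of Proposition~\ref{triviality} (where this lemma is invoked) one has $r=\max\{i\geq 0: X_{-\alpha}^iv_\gamma\neq 0\}$, so $v_{s_\alpha*\gamma}$ is the \emph{bottom} of the $\alpha$-string through $v_\gamma$, and for $v_\gamma\in V(\lambda)$ with $(\gamma,\alpha)=0$ this gives $r=(\lambda,\alpha^\vee)$, not $0$. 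This is not a cosmetic difference. In the sub-case $\alpha\notin\supp\gamma$, $(\lambda,\alpha)\neq 0$, the paper establishes Assertion~(\ref{2tobeproved}) precisely because $X_\alpha^{a}$ with $a=(\lambda,\alpha^\vee)=r$ applied to $X_{-\alpha}^{r}v_\gamma$ returns a nonzero multiple of $v_\gamma$. With your representative, $X_\alpha v_\gamma=0$ for weight reasons whenever $\alpha\notin\supp\gamma$ (the weight $\lambda-\gamma+\alpha$ does not occur in $V(\lambda)$), so Assertion~(\ref{2tobeproved}) is simply false for $v_{s_\alpha*\gamma}=v_\gamma$ and your alternative ``either $X_\alpha v_\gamma\neq 0$'' is empty. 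You then fall back on the existence of some $\lambda_i$ orthogonal to $\alpha$ with $v_\gamma\cdot v_{\lambda_i}\neq 0$, but this is exactly what cannot be guaranteed in that sub-case (which is why the paper resorts to Assertion~(\ref{2tobeproved}) there), and your supporting claim --- that $v_\gamma\otimes v_{\lambda_i}$ is swallowed by the Cartan summand only when $\gamma=0$ --- is false: if $v_\gamma=X_{-\gamma}v_\lambda$ and $X_{-\gamma}v_{\lambda_i}=0$, then $v_\gamma\cdot v_{\lambda_i}=X_{-\gamma}\bigl(v_\lambda\cdot v_{\lambda_i}\bigr)$ lies in $V(\lambda+\lambda_i)$ and vanishes in the quotient; the paper's own argument has to manoeuvre around precisely this vanishing by passing to a different partner $\lambda_j$.

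The case $\alpha\in\supp\gamma$ has the analogous problem in the opposite direction. You aim at Assertion~(\ref{2tobeproved}) via an upward $\alpha$-string count, but in the two configurations that survive Lemma~\ref{rootsupport} and Lemma~\ref{f4} (supports of type $\mathsf B_n$ and $\mathsf C_n$), the dominant weight $\lambda$ carrying $v_\gamma$ satisfies $(\lambda,\alpha)=0$, so there is no $\alpha$-string to climb; the paper instead proves Assertion~(\ref{1tobeproved}) there, using that $X_{-\alpha}v_{s_\alpha*\gamma}=0$ holds automatically for the bottom-of-string representative and exhibiting a partner $\lambda_i$ for which the product survives in $S^2V/V(2\underline\lambda)$. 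Note finally that even your favourable Case~A needs $X_{-\alpha}v_\gamma=0$ to deduce $X_{-\alpha}(v_\gamma\cdot v_{\lambda_i})=v_\gamma\cdot X_{-\alpha}v_{\lambda_i}$, which for your representative requires $v_\gamma$ to sit in a factor orthogonal to $\alpha$ --- yet another point where the choice of representative is what makes the computation close.
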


\begin{proof}
First assume that $\alpha$ does not belong to the support of $\gamma$.
Then $\alpha$ is orthogonal to every simple root in the support of $\gamma$.
Let $v_\gamma\in V(\lambda)$. It follows that $v_{s_\alpha*\gamma}=v_\gamma$ in $V$ if and only if $(\lambda,\alpha)=0$.
If $v_\gamma\cdot v_\lambda\neq 0$ then Assertion~(\ref{1tobeproved}) holds whenever $(\lambda,\alpha)=0$.
If $v_\gamma\cdot v_\lambda=0$ then $v_\gamma=X_{-\gamma}v_\lambda$ and there exists $\lambda_j\neq \lambda$ such that
and $0\neq v_\gamma\cdot v_{\lambda_j}=v_\lambda\cdot X_{-\gamma}v_{\lambda_j}$ whence Assertion~(\ref{1tobeproved}) whenever $(\lambda,\alpha)=0$.
Let now $(\lambda,\alpha)\neq 0$. Note that $X_\alpha v_\gamma=0$ in $V$ since $\alpha$ does not belong to the support of $\gamma$.
Then $v_{s_\alpha*\gamma}= X_{-\alpha}^r v_\gamma$ with $r=(\lambda-\gamma,\alpha^\vee)=(\lambda,\alpha^\vee)$.
Further $v_{s_\alpha*\gamma}\not\in\mathfrak g.v_{\lambda}$;
Assertion~(\ref{2tobeproved}) thus holds with $\lambda_i=\lambda$ .

Assume now that $\alpha$ lies in the support of $\gamma$.
Then by Lemma~\ref{rootsupport}, $\gamma-\alpha$ has to be a root; the type $\mathsf F_4$ is ruled out by Lemma~\ref{f4}.
More precisely $\gamma$ is a root of type $\mathsf B_n$ or $\mathsf C_n$.
Further in type $\mathsf B_n$, we can choose $v_\gamma= X_{-\gamma}v_\lambda$
whereas $v_\gamma\in V(\lambda)\setminus \mathfrak g.v_\lambda$ in type $\mathsf C_n$ along with $(\lambda,\alpha)=0$ in both cases.
In the first situation, $v_{s_\alpha*\gamma}=X_{-\gamma-\alpha}v_\lambda$ and there exists $\lambda_i\neq \lambda$ non-orthogonal to $\gamma$.
In type $\mathsf B_n$, we then have $0\neq v_{s_\alpha*\gamma}\cdot v_{\lambda_i}=X_{-\gamma-\alpha}v_{\lambda_i}\cdot v_\lambda$ whence 
Assertion~(\ref{1tobeproved}).
In type $\mathsf C_n$, Assertion~(\ref{1tobeproved}) holds with $\lambda_i=\lambda$.
\end{proof}

\subsection{Proof of Proposition~\ref{non-rigidity}.}

Given $\gamma$ a spherical root, recall the definition of  $S(\gamma)$; \emph{see} Proposition~\ref{non-rigidity}.
Note that $S(\gamma)$ is of cardinality at most $2$; \textsl{see} Table~\ref{sphericalroots}.

In this section, we are given a saturated monoid $\Gamma$ spanned by linearly independent dominant weights $\lambda_i$, $i=1,\ldots,s$.
Recall the definitions of $S^p(\Gamma)$ and $\Sigma(\Gamma)$; \textsl{see} the paragraph before Proposition~\ref{characterisationofH^1}.
We assume also that the support of $\Sigma(\Gamma)$ consists of the whole set of simple roots of the group $G$.

Proposition~\ref{non-rigidity} follows from the lemmas stated below. 

\begin{lemma}~\label{ruleout}
Let $\gamma\in\Sigma(\Gamma)$ be such that $S(\gamma)$ consists of two distinct simple roots $\alpha$ and $\beta$.
If one of the dominant weights $\lambda_i$ is neither orthogonal to $\alpha$ nor to $\beta$ then
all the others are orthogonal to both $\alpha$ and $\beta$.
\end{lemma}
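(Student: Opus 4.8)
\textbf{Proof plan for Lemma~\ref{ruleout}.}
The plan is to argue by contradiction: suppose $\lambda_k$ is not orthogonal to $\alpha$ nor to $\beta$, while some other $\lambda_l$ is not orthogonal to, say, $\alpha$. Since $\gamma\in\Sigma(\Gamma)$ with $S(\gamma)=\{\alpha,\beta\}$, Table~\ref{sphericalroots} tells us $\gamma=\omega$-type weight whose support has rank at least $2$ and in which both $\gamma-\alpha$ and $\gamma-\beta$ are roots; in particular $\alpha,\beta\in\supp\gamma$. I would first translate the hypothesis into the language of Section~\ref{Proofs}: pick the $T_\ad$-weight vector $[v_\gamma]\in(V/\mathfrak g.v_{\underline\lambda})^{G_{v_{\underline\lambda}}}$ of weight $\gamma$ supplied by Theorem~\ref{tgtspaceasmodule}, and examine its components in $V(\lambda_k)$ and $V(\lambda_l)$.

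The key step is to exploit the invariance of $[v_\gamma]$, i.e. $[X_\delta v_\gamma]=0$ in $V/\mathfrak g.v_{\underline\lambda}$ for every simple root $\delta\in S^p(\Gamma)$ (so $\delta$ orthogonal to every $\lambda_i$), together with the constraint that $v_\gamma$ has weight $\lambda_i-\gamma$ in each $V(\lambda_i)$-component. If $\lambda_k$ is non-orthogonal to both $\alpha$ and $\beta$, then in $V(\lambda_k)$ the vector $v_\gamma^k$ of weight $\lambda_k-\gamma$ is typically pushed off the orbit by applying $X_\alpha$ and $X_\beta$ — I would use Lemma~\ref{rootsupport} and the fact that $\gamma-\alpha$, $\gamma-\beta$ are roots to control $X_\alpha v_\gamma^k$ and $X_\beta v_\gamma^k$, and then Lemma~\ref{invariancecondition}/Proposition~\ref{supportofGamma} to deduce that these must in fact land back in $\mathfrak g.v_{\underline\lambda}$, which forces a relation between $\lambda_k$, $\lambda_l$ and $\gamma$. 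Concretely, I expect that non-orthogonality of a second weight $\lambda_l$ to $\alpha$ produces a nonzero vector $X_{\beta'}v_\gamma$ (for a suitable positive root $\beta'$ in $\supp\gamma$ adjacent to $\alpha$) whose weight $\gamma-\beta'$ cannot be written as a $\mathbb Z$-combination of the $\lambda_i$'s, contradicting $G_{v_{\underline\lambda}}$-invariance of $[v_\gamma]$; this is exactly the mechanism already used at the end of the proof of Proposition~\ref{characterisationofH^1}.

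Finally I would go through the short list of possible supports for $\gamma$ with $|S(\gamma)|=2$ — from Table~\ref{sphericalroots} these are $\gamma=\alpha_1+2\alpha_2+\alpha_3$ in type $\mathsf A_3$, $\gamma=\alpha_1+2\alpha_2+3\alpha_3$ in type $\mathsf B_3$, the two type-$\mathsf D_4$ spherical roots, the type-$\mathsf F_4$ root, and $\gamma=4\alpha_1+2\alpha_2$ in type $\mathsf G_2$ — and in each case read off from the explicit coefficients that $S(\gamma)=\{\alpha,\beta\}$ are the two ``extremal'' nodes, checking that $\gamma-\alpha$, $\gamma-\beta$ are roots but $\gamma-\delta$ is not for the interior nodes $\delta$. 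Combining this with the invariance argument of the previous paragraph rules out a second non-orthogonal weight on either $\alpha$ or $\beta$, and a symmetric argument handles the case where the second weight is non-orthogonal to $\beta$. The main obstacle I anticipate is the bookkeeping in the mixed case where $\lambda_l$ is non-orthogonal to $\alpha$ but $v_\gamma$ has been normalized to have trivial $V(\lambda_k)$-component (as in the claim inside the proof of Proposition~\ref{supportofGamma}): one has to be careful that after this normalization the relevant vector $X_{\beta'}v_\gamma$ is still nonzero in $V$, which is where Lemma~\ref{rootsupport} and the precise shape of $\gamma$ from Table~\ref{sphericalroots} do the real work.
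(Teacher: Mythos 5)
Your overall strategy --- take the invariant $T_\ad$-weight vector $[v_\gamma]$, apply raising operators, and exploit that the result must land in $\mathfrak g.v_{\underline\lambda}$ --- is the right starting point, but two concrete things go wrong. The most serious is that your list of spherical roots with $|S(\gamma)|=2$ is essentially the complement of the correct one. With $S(\gamma)=\{\delta\in S:\gamma-\delta \mbox{ is a root}\}$, the weight $\gamma=\alpha_1+2\alpha_2+\alpha_3$ of type $\mathsf A_3$ has $S(\gamma)=\{\alpha_2\}$ (indeed $\gamma-\alpha_1=2\alpha_2+\alpha_3$ is \emph{not} a root, while $\gamma-\alpha_2$ is); likewise $\alpha_1+2\alpha_2+3\alpha_3$ gives $S(\gamma)=\{\alpha_3\}$, both exceptional $\mathsf D_4$ roots and $4\alpha_1+2\alpha_2$ give singletons. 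All of these are therefore excluded by the hypothesis of the lemma. The roots actually covered are $\alpha+\alpha'$ of type $\mathsf A_1\times\mathsf A_1$, $\alpha_1+\ldots+\alpha_n$ of types $\mathsf A_n$ ($n\geq2$) and $\mathsf B_n$, the $\mathsf C_n$ root (where $S(\gamma)=\{\alpha_1,\alpha_2\}$ consists of two \emph{adjacent} nodes, so ``the two extremal nodes'' is also false), $\alpha_1+\alpha_2$ in $\mathsf G_2$, and the $\mathsf F_4$ root (with $S(\gamma)=\{\alpha_3,\alpha_4\}$). In particular the cases that feed Lemma~\ref{corollaryruleout} and Proposition~\ref{non-rigidity} ($\mathsf A_1\times\mathsf A_1$ and $\mathsf A_n$) are precisely the ones your final case-check would never examine.

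The second problem is the exclusion mechanism itself. The contradiction you propose --- that the weight $\gamma-\beta'$ of $X_{\beta'}v_\gamma$ is not a $\mathbb Z$-combination of the $\lambda_i$'s --- belongs to the component-group argument in the proof of Proposition~\ref{characterisationofH^1} and does not fire here: for $\beta\in S(\gamma)$ the weight $\gamma-\beta$ \emph{is} a root by definition, so $X_\beta v_\gamma\in\mathfrak g.v_{\underline\lambda}$ is perfectly consistent and yields no contradiction. The paper's argument is direct rather than by contradiction: after disposing of $\mathsf F_4$ via Lemma~\ref{f4} and using saturation to see that every $\lambda_i\neq\lambda$ is already orthogonal to one of the two roots, say $\beta$, it uses the rank-one results of Jansou (through Lemma~\ref{weightvector}, splitting into the two subcases according to whether $(V(\lambda)/\mathfrak g.v_\lambda)^{G_{v_\lambda}}$ is trivial) to normalize $v_\gamma$ inside the single summand $V(\lambda)$ with $X_\beta v_\gamma\neq0$ in $V$. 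Then the membership $X_\beta v_\gamma\in\mathfrak g.v_{\underline\lambda}$ forces $X_{-\gamma+\beta}v_{\lambda_i}=0$, i.e.\ $(\lambda_i,\gamma-\beta)=0$, for every $\lambda_i\neq\lambda$, and since $\alpha\in\supp(\gamma-\beta)$ this is exactly the desired orthogonality. Your plan is missing both ingredients that make this work: the normalization of $v_\gamma$ into a single $V(\lambda)$ (without which comparing components of $X_\beta v_\gamma$ with $X_{-\gamma+\beta}v_{\underline\lambda}$ gives no information about the other $\lambda_i$'s) and the non-vanishing $X_\beta v_\gamma\neq0$ in $V$.
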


\begin{proof}
The case of $\gamma$ being of type $\mathsf F_4$ is solved in Lemma~\ref{f4}.

Let $[v_\gamma]$ be the $T_\ad$-weight vector in $\left(V/\mathfrak g.v_{\underline\lambda}\right)^{G_{v_{\underline\lambda}}}$ of  weight the given $\gamma$. Choose $v_\gamma\in \oplus V(\lambda_i)_{\lambda_i-\gamma}$.
Let $\lambda$ be one of the given $\lambda_i$'s. Suppose it is neither orthogonal to $\alpha$ nor to $\beta$.

First note that because of saturation, all the $\lambda_i$'s except $\lambda$ have to be orthogonal to (for instance) $\beta$.
If $\gamma$ is orthogonal to one of the simple roots, $\alpha$ and $\beta$, then this simple root has to be $\alpha$.

Take a representative $v_\gamma$ of $[v_\gamma]$ as in Lemma~\ref{weightvector}.

Recall that $\gamma-\delta$ is not a root for all simple roots $\delta$ distinct to $\alpha$ and $\beta$.
Since $X_\delta.v_\gamma\in\mathfrak g.v_{\underline\lambda}$, we have: $X_\delta.v_\gamma=0$ in $V$ for such $\delta$'s.
It follows that $X_\alpha.v_\gamma\neq 0$ or $X_\beta.v_\gamma\neq 0$ in $V$, $v_\gamma$ not being a highest weight vector in $V$.

If $\left(V(\lambda)/\mathfrak g.v_{\lambda}\right)^{G_{v_{\lambda}}}$ is not trivial 
then $(\gamma,\alpha)$ and $(\gamma,\beta)$ are both strictly positive; \textsl{see}~\cite{Js}.
Further one can thus choose the representative $v_\gamma$ in $V(\lambda)$ and such that $X_\beta.v_\gamma\neq 0$ in $V$;
in particular $X_\beta.v_\gamma\in \mathfrak g.v_\lambda$.
For $X_\beta.v_\gamma\in \mathfrak g.v_{\underline\lambda}$, we have
$X_{-\gamma+\beta}.v_{\lambda_i}=0$ for every $\lambda_i\neq \lambda$ hence the $\lambda_i$'s except $\lambda$ have to be orthogonal to $\alpha$.

If $\left(V(\lambda)/\mathfrak g.v_{\lambda}\right)^{G_{v_{\lambda}}}$ is trivial then $v_\gamma=X_{-\gamma}.v_\lambda$.
Since $\gamma-\beta$ is a root, we have: $X_\alpha.v_\gamma=X_{-\gamma+\beta}v_\lambda\neq 0$ in $V$.
We conclude as before that all the $\lambda_i$'s but $\lambda$ are orthogonal to $\alpha$.
\end{proof}

As a straightforward consequence of Lemma~\ref{ruleout}, we obtain the following statement.

\begin{lemma}\label{corollaryruleout}
Consider a spherical root system with a spherical root $\gamma=\alpha_1+\ldots+\alpha_n$ of type $\mathsf A_n$, $n>1$.
Then we may choose one of the dominant weight to be  $\omega_1+\omega_{n}$
if and only if one of the following occurs
\smallbreak
\noindent
{\rm (i)}\enspace
this spherical system is of rank $1$ or
\smallbreak\noindent
{\rm(ii)}\enspace
all the spherical roots $\gamma'$ such that $(\gamma',\gamma)\neq 0$ are of type $\mathsf{A_1}\times\mathsf{A_1}$.
\end{lemma}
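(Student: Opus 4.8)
The plan is to deduce Lemma~\ref{corollaryruleout} directly from Lemma~\ref{ruleout} by unwinding what the set $S(\gamma)$ is when $\gamma=\alpha_1+\ldots+\alpha_n$ has type $\mathsf A_n$ with $n>1$. First I would observe that for such a $\gamma$, the simple roots $\delta$ with $\gamma-\delta$ a root are exactly the two endpoints of the $\mathsf A_n$ diagram, so $S(\gamma)=\{\alpha_1,\alpha_n\}$; this is read off from the root system of type $\mathsf A_n$ and agrees with the remark that $|S(\gamma)|\le 2$. Next I would recall from the definition of colors that the weight $\omega_1+\omega_n$ is precisely the color attached to a spherical root $\sigma$ with $\sigma=\alpha+\beta$ of type $\mathsf A_1\times\mathsf A_1$ or, more relevantly here, to a type-$\mathsf A_n$ spherical root via the requirement $\alpha+\beta\in\Sigma$; so saying ``one of the dominant weights $\lambda_i$ may be taken to be $\omega_1+\omega_n$'' means exactly that the monoid $\Gamma$ admits a generator $\lambda$ with $(\lambda,\alpha_1^\vee)=(\lambda,\alpha_n^\vee)=1$ and $(\lambda,\delta^\vee)=0$ for all other simple roots $\delta$.

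Having made this translation, the forward direction ($\Leftarrow$) is the easy one: if the spherical system has rank $1$, then $\Sigma(\Gamma)=\{\gamma\}$ and the only constraint on the $\lambda_i$'s coming from the other spherical roots is vacuous, so one checks directly that $\omega_1+\omega_n$ is a legitimate choice (it is orthogonal to every simple root in the support of $\gamma$ except $\alpha_1,\alpha_n$, and compatibility with $S^p$ holds). If instead (ii) holds, every other spherical root $\gamma'$ with $(\gamma',\gamma)\neq0$ is of type $\mathsf A_1\times\mathsf A_1$, hence $\gamma'=\delta+\delta'$ with $\delta,\delta'$ orthogonal simple roots adjacent to the $\mathsf A_n$ chain; I would check that the color $\omega_1+\omega_n$ is still compatible, using axiom $(\Sigma 2)$ to control the pairings $(\delta^\vee,\gamma)$. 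For the converse ($\Rightarrow$), suppose $\lambda=\omega_1+\omega_n$ is one of the generators; then $\lambda$ is not orthogonal to $\alpha_1$ nor to $\alpha_n$, so Lemma~\ref{ruleout} applies and forces \emph{all other} generators $\lambda_i$ to be orthogonal to both $\alpha_1$ and $\alpha_n$. I would then argue that any spherical root $\gamma'\in\Sigma(\Gamma)$ other than $\gamma$ with $(\gamma',\gamma)\neq 0$ must have its support meeting $\{\alpha_1,\ldots,\alpha_n\}$, hence meeting $\{\alpha_1,\alpha_n\}$ or an adjacent node; combining the orthogonality just obtained with the list in Table~\ref{sphericalroots} and the compatibility axioms, the only surviving possibility for $\gamma'$ is the type $\mathsf A_1\times\mathsf A_1$ root, giving (ii); and if no such $\gamma'$ exists the system is rank $1$ along $\gamma$, which is case (i).

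The main obstacle I expect is the bookkeeping in the converse direction: showing that no spherical root $\gamma'$ of type other than $\mathsf A_1\times\mathsf A_1$ can coexist with $\gamma$ and interact nontrivially with it once the other generators are forced orthogonal to $\alpha_1$ and $\alpha_n$. This requires going through Table~\ref{sphericalroots} type by type — e.g.\ ruling out a second $\mathsf A_m$ root sharing an endpoint, or a $\mathsf B$/$\mathsf C$/$\mathsf D$ root overlapping the chain — and checking in each case that the saturation of $\Gamma$ together with axioms $(\Sigma 1)$, $(\Sigma 2)$, $(S)$ and $(St)$ is violated. I would organize this as a short case analysis keyed to which node of the $\mathsf A_n$ chain the support of $\gamma'$ touches, invoking Lemma~\ref{ruleout} repeatedly whenever a generator fails to be orthogonal to an endpoint. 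The rest is essentially formal once the identification $S(\gamma)=\{\alpha_1,\alpha_n\}$ and the meaning of the color $\omega_1+\omega_n$ are in place.
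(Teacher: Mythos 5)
Your proposal is correct in outline and follows exactly the route the paper takes: the paper gives no written argument for this lemma beyond asserting it is ``a straightforward consequence of Lemma~\ref{ruleout}'', and your plan --- identify $S(\gamma)=\{\alpha_1,\alpha_n\}$, apply Lemma~\ref{ruleout} to force the remaining generators orthogonal to both endpoints, then sieve the possible interacting spherical roots through Table~\ref{sphericalroots} and the axioms --- is precisely that deduction, fleshed out. Since the two approaches coincide, no further comparison is needed.
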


\begin{lemma}
Let $\lambda$ be one of the given dominant weights $\lambda_i$.
Let $\alpha$ and $\beta$ be two distinct simple roots.
Assume that $(\lambda_i,\alpha)=(\lambda_i,\beta)=0$ for all $\lambda_i=\lambda$ and that $(\lambda,\alpha).(\lambda,\beta)\neq 0$.
Then one of the following occurs:
\smallbreak
\noindent{\rm (i)}\enspace
$(\gamma,\alpha)=0=(\gamma,\beta)$ for all $\gamma\in\Sigma(\Gamma)$.
\smallbreak
\noindent{\rm (ii)}\enspace
$(\lambda,\delta)=0$ for every simple root $\delta$ distinct to $\alpha$ and $\beta$.
\end{lemma}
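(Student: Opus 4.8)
The plan is to prove the statement by contradiction against conclusion (ii). Assume there is a simple root $\delta_0\notin\{\alpha,\beta\}$ with $(\lambda,\delta_0)\neq0$; together with the hypotheses this says that, among the $\lambda_i$, only $\lambda$ is non-orthogonal to any of the three pairwise distinct simple roots $\alpha,\beta,\delta_0$, so $G$ has rank at least $3$. Under this assumption I will establish (i): for every $\gamma\in\Sigma(\Gamma)$ one has $(\gamma,\alpha)=(\gamma,\beta)=0$. Fixing $\gamma$, by the symmetry between $\alpha$ and $\beta$ it is enough to exclude $(\gamma,\alpha)\neq0$.

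First I would record the rigidity produced by the orthogonality hypothesis, in the style of the proof of Lemma~\ref{ruleout}. Let $[v_\gamma]\in(V/\mathfrak g.v_{\underline\lambda})^{G_{v_{\underline\lambda}}}$ be the $T_\ad$-weight vector of weight $\gamma$ (Theorem~\ref{tgtspaceasmodule}), with a representative $v_\gamma\in\bigoplus_i V(\lambda_i)_{\lambda_i-\gamma}$ chosen as in Lemma~\ref{weightvector}. Since $\mathfrak n^+\subset\mathfrak g_{v_{\underline\lambda}}$, for every positive root $\epsilon$ the vector $X_\epsilon v_\gamma$ lies in $\mathfrak g.v_{\underline\lambda}$; thus $X_\alpha v_\gamma=0$ in $V$ unless $\gamma-\alpha$ is a root, in which case $X_\alpha v_\gamma$ is a scalar multiple of $X_{-(\gamma-\alpha)}v_{\underline\lambda}=\sum_iX_{-(\gamma-\alpha)}v_{\lambda_i}$. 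Comparing the $V(\lambda_i)$-components of this identity against the orthogonality of the $\lambda_i\neq\lambda$ to $\alpha$ and $\beta$ then forces, just as in that proof, either a contradiction outright or a support restriction on $\gamma-\alpha$ (and on $\gamma-\beta$) relative to the $\lambda_i$.

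Next I would use $\supp\Sigma(\Gamma)=S$ to pick $\gamma_1\in\Sigma(\Gamma)$ with $\delta_0\in\supp\gamma_1$, and invoke Lemma~\ref{rootsupport} together with Table~\ref{sphericalroots} to pin down the shape of $\gamma_1$ near $\delta_0$ — in particular the alternative ``$(\gamma_1,\delta_0)=0$ with $\gamma_1-\delta_0$ not a root'' is excluded, as it would force $\delta_0\perp\lambda$. With these two local pictures — one near $\alpha$ coming from the putative $\gamma$, one near $\delta_0$ coming from $\gamma_1$ — I would then run through the admissible types of $\gamma$ and $\gamma_1$. The point is that the constraint ``$\gamma-\alpha$ (or $\gamma-\beta$) a root whose support avoids every $\lambda_i\neq\lambda$'' cannot coexist with ``$\delta_0\in\supp\gamma_1$ and $\delta_0$ seen only by $\lambda$'' inside a single Dynkin diagram while respecting the spherical-system axioms $(\Sigma1)$, $(\Sigma2)$ and the fact that $\lambda$ alone meets $\alpha,\beta,\delta_0$. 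Type $\mathsf F_4$ is removed by Lemma~\ref{f4}; the case where $\gamma$ is of type $\mathsf A_n$ with $\{\alpha,\beta\}=\{\alpha_1,\alpha_n\}$ is handled with Lemma~\ref{corollaryruleout} and Lemma~\ref{ruleout}; the remaining types $\mathsf A,\mathsf B,\mathsf C,\mathsf D,\mathsf G_2$ each give a short contradiction.

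The hard part, as usual in these arguments, is this last type-by-type check that the local configurations near $\alpha$, near $\beta$ and near $\delta_0$ are incompatible. I expect it to be short and to mirror closely the proofs of Lemma~\ref{ruleout} and Lemma~\ref{corollaryruleout}: the recurring lever is that every raising operator sending $v_\gamma$ back into $\mathfrak g.v_{\underline\lambda}$ lands it, up to scalar, on $X_{-(\gamma-\alpha)}v_{\underline\lambda}$, so its $V(\lambda_i)$-components for $\lambda_i\neq\lambda$ must vanish; iterating this spreads orthogonality onto the remaining $\lambda_i$ and collapses the support of $\lambda$ onto $\{\alpha,\beta\}$.
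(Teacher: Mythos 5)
Your proposal is a plan rather than a proof: the entire mathematical content is deferred to a ``type-by-type check'' that you describe as expected to be short but never carry out. That check is not a routine verification here --- it is the whole lemma --- so as written there is a genuine gap. Moreover, the route you choose (working with explicit representatives $v_\gamma$, raising operators, and the $V(\lambda_i)$-components of $X_{-(\gamma-\alpha)}v_{\underline\lambda}$, in the style of the proof of Lemma~\ref{ruleout}) is considerably heavier than what the statement requires, which is why your case analysis threatens to balloon and why you cannot close it.

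The observation you are missing, and which the paper uses to make this essentially a three-line argument, is purely arithmetic: every $\gamma\in\Sigma(\Gamma)$ lies in $\mathbb Z\Gamma$, so $\gamma=\sum_j c_j\lambda_j$ with $c_j\in\mathbb Z$; since by hypothesis $\lambda$ is the only generator not orthogonal to $\alpha$ or to $\beta$, one gets $(\gamma,\alpha)=c_\lambda(\lambda,\alpha)$ and $(\gamma,\beta)=c_\lambda(\lambda,\beta)$, whence $(\gamma,\alpha)>0$ if and only if $(\gamma,\beta)>0$, and the two pairings vanish together. If (i) fails, pick $\gamma$ with $(\gamma,\alpha)>0$; then $(\gamma,\beta)>0$ as well, and Lemma~\ref{rootsupport} together with Table~\ref{sphericalroots} forces $(\gamma,\delta)\le 0$ for every simple root $\delta$ distinct from $\alpha$ and $\beta$; conclusion (ii) then follows from the spherical-system axioms and Lemma~\ref{corollaryruleout}. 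Your contradiction against (ii) is logically equivalent in shape, but without the sign computation above you have no handle linking the behaviour of $\gamma$ at $\alpha$ to its behaviour at $\beta$, and the ``local pictures near $\alpha$, $\beta$ and $\delta_0$'' you propose to intersect are not pinned down enough to produce the contradiction you assert. I would redo the argument starting from the membership $\gamma\in\mathbb Z\Gamma$.
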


\begin{proof}
Since every $T_\ad$-weight $\gamma$ in $\Sigma(\Gamma)$ is an integral sum of the dominant weights, we have:
$(\gamma,\alpha)>0$ if and only if $(\gamma,\beta)>0$ for all $\gamma\in\Sigma(\Gamma)$.
Suppose there exists $\gamma\in\Sigma(\Gamma)$ such that $(\gamma,\alpha)>0$ then $(\gamma,\delta)\leq 0$ for every simple root $\delta$ distinct to $\alpha$ and $\beta$ by Lemma~\ref{rootsupport}.
The second item of the lemma thus follows from the definition of spherical systems and Lemma~\ref{corollaryruleout}.
\end{proof}

\begin{lemma}
Let $\alpha$ be a simple root.
Suppose there exist two distinct dominant weights $\lambda$ and $\lambda'$ among the given $\lambda_i$'s  which are not orthogonal to $\alpha$.
Then $(\gamma,\alpha)=0$ for all $\gamma\in\Sigma(\Gamma)$.
\end{lemma}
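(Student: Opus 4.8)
The plan is to argue by contradiction. Suppose some $\gamma\in\Sigma(\Gamma)$ satisfies $(\gamma,\alpha^\vee)\neq 0$; I will show that then $\alpha$ is orthogonal to all but one of the given weights $\lambda_i$, contradicting the hypothesis. By Theorem~\ref{tgtspaceasmodule} the weight $\gamma$ occurs with multiplicity one in $(V/\mathfrak g.v_{\underline\lambda})^{G_{v_{\underline\lambda}}}$, so I may fix the corresponding $T_\ad$-weight vector $[v_\gamma]$ and, using Lemma~\ref{weightvector}, a representative $v_\gamma\in V(\lambda_k)_{\lambda_k-\gamma}$ lying in a single irreducible component $V(\lambda_k)$. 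Two tools will be used throughout. First, the invariance of $[v_\gamma]$: for every simple root $\delta$ one has $X_\delta v_\gamma\in\mathfrak g.v_{\underline\lambda}$, and since the nonzero $T_\ad$-weights occurring in $\mathfrak g.v_{\underline\lambda}$ lie among the positive roots, whenever $\gamma-\delta$ is not a root this forces $X_\delta v_\gamma=0$ in $V$. Second, $\gamma\in\mathbb Z\Gamma$; as the $\lambda_i$ are linearly independent there is a unique expression $\gamma=\sum_i m_i\lambda_i$ with $m_i\in\mathbb Z$, and saturation of $\Gamma$ together with the dominance of the $\lambda_i$ constrains these coefficients.

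First I would pin down the position of $\alpha$ relative to $\gamma$. Inspecting Table~\ref{sphericalroots} (alternatively, using Lemma~\ref{rootsupport}), one checks that whenever $\alpha\in\supp\gamma$ and $\gamma-\alpha$ is not a root one has $(\gamma,\alpha^\vee)=0$; hence the assumption $(\gamma,\alpha^\vee)\neq 0$ leaves only two situations. In the first, $\alpha\notin\supp\gamma$; then $(\gamma,\alpha^\vee)<0$ and $X_\alpha v_\gamma=0$ in $V$ (its $T_\ad$-weight $\gamma-\alpha$ is not a root), so $v_\gamma$ is a highest weight vector for the copy of $\mathfrak{sl}_2$ attached to $\alpha$; running the $\alpha$-string downward from $v_\gamma$ inside $V(\lambda_k)$ and comparing with the invariance conditions for the simple roots of $\supp\gamma$ adjacent to $\alpha$, one shows that $\lambda_k$ accounts for all non-orthogonality to $\alpha$, i.e. every $\lambda_i\neq\lambda_k$ is orthogonal to $\alpha$. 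In the second, $\alpha\in\supp\gamma$ and $\gamma-\alpha$ is a root; here I would follow the analysis of the proof of Proposition~\ref{characterisationofH^1}: Proposition~\ref{supportofGamma} together with the vanishing conditions $X_\delta v_\gamma=0$ for $\delta\neq\alpha$ with $\gamma-\delta$ not a root constrains where $\alpha$ sits in $\supp\gamma$ (the non-reduced cases $\gamma=2\alpha$, $\gamma=4\alpha_1+2\alpha_2$, $\gamma=2(\alpha_1+\cdots+\alpha_n)$ being treated directly, and type $\mathsf F_4$ being disposed of via Lemma~\ref{f4}), and one may take $v_\gamma=X_{-\gamma}v_{\lambda_k}$ or $v_\gamma\in(V(\lambda_k)/\mathfrak g.v_{\lambda_k})^{G_{v_{\lambda_k}}}$. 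A second weight $\lambda_j\neq\lambda_k$ with $(\lambda_j,\alpha^\vee)\neq 0$ then either contradicts the invariance of $[v_\gamma]$ — producing, via some $X_\delta$, a vector whose $T_\ad$-weight is not a root and so cannot lie in $\mathfrak g.v_{\underline\lambda}$ — or contradicts the unique expression $\gamma=\sum_i m_i\lambda_i$ combined with saturation and dominance.

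The step I expect to be the main obstacle is the second situation in the cases where $(\gamma,\alpha^\vee)=2$ or $\gamma$ is non-reduced. There one must first identify $\lambda_k$ with the correct generator — the color $2\omega_\alpha$ rather than $\omega_\alpha$, or the relevant endpoint color — and then verify that the $\mathfrak{sl}_2(\alpha)$-string starting at $v_\gamma$, under the vanishing and invariance constraints, leaves no room for a further generator non-orthogonal to $\alpha$ without violating either multiplicity one in $(V/\mathfrak g.v_{\underline\lambda})^{G_{v_{\underline\lambda}}}$ (Theorem~\ref{tgtspaceasmodule}) or saturation of $\Gamma$. The remaining bookkeeping runs parallel to the mechanisms already set up in Lemma~\ref{ruleout}, Lemma~\ref{corollaryruleout}, and the proof of Proposition~\ref{characterisationofH^1}, after which Proposition~\ref{non-rigidity} follows by assembling this lemma with those preceding it.
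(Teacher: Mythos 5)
Your Case~2 (the spherical root $\gamma$ with $\alpha\in\supp\gamma$ and $\gamma-\alpha$ a root) tracks the first half of the paper's argument: there the paper simply combines Lemma~\ref{rootsupport} with Lemma~\ref{ruleout} to force $(\gamma,\alpha)=0$, which by Table~\ref{sphericalroots} pins $\gamma$ down to type $\mathsf B_n$ (with $\alpha$ the extremal short root) or $\mathsf C_n$ (with $\alpha=\alpha_1$). That part of your proposal is essentially sound, if vaguer than it needs to be.

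The genuine gap is your Case~1, i.e.\ a spherical root $\gamma'$ with $\alpha\notin\supp\gamma'$ and $(\gamma',\alpha)<0$. You propose to rule this out by ``running the $\alpha$-string downward from $v_{\gamma'}$ inside $V(\lambda_k)$'' and concluding that all $\lambda_i\neq\lambda_k$ are orthogonal to $\alpha$. This does not work: when $\alpha\notin\supp\gamma'$ the cocycle/invariance conditions $X_\delta v_{\gamma'}\in\mathfrak g.v_{\underline\lambda}$ only constrain the pairings of the $\lambda_i$ with roots supported on $\supp\gamma'$, and the $\mathfrak{sl}_2(\alpha)$-string through $v_{\gamma'}$ says nothing about whether a \emph{second} generator $\lambda_j$ pairs nontrivially with $\alpha$; no contradiction is produced. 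The paper's mechanism here is entirely different and is the real content of the lemma: it first invokes the standing hypothesis that $\supp\Sigma(\Gamma)=S$ (which you never use) to produce a spherical root $\gamma$ \emph{containing} $\alpha$ in its support, reduces to the $\mathsf B_n$/$\mathsf C_n$ picture above, and then, in the $\mathsf C_n$ case, rules out a $\gamma'$ with $(\gamma',\alpha)<0$ by a descent: saturation forces a second simple root $\alpha'$ with $(\gamma',\alpha')<0$, one finds $\gamma''\in\Sigma(\Gamma)$ with $(\gamma'',\alpha')>0$, deduces that two generators are non-orthogonal to $\alpha'$, and reapplies the whole argument to the pair $(\gamma'',\alpha')$ to reach a contradiction. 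Your proposal has no analogue of this global step, and without it the case $(\gamma',\alpha)<0$, $\alpha\notin\supp\gamma'$ remains open.
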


\begin{proof}
Take $\gamma\in\Sigma(\Gamma)$ such that $\alpha$ belongs to the support of $\gamma$.
Note that such a $\gamma$ does exist by assumption of the support of $\Sigma(\Gamma)$.
Further $(\gamma,\alpha)\geq 0$ (\textsl{see} Table~\ref{sphericalroots}) and
by Proposition~\ref{rootsupport}, $\gamma-\alpha$ has to be a root. 
By Lemma~\ref{ruleout} we should have: $(\gamma,\alpha)=0$.
But this happens only in type $\mathsf B_n$ or $\mathsf C_n$ (\textsl{see} Table~\ref{sphericalroots}).
In type $\mathsf B_n$, $\alpha$ being extremal, the proposition follows readily.
Consider now the type $\mathsf C_n$ and suppose there exists $\gamma'\in\Sigma(\Gamma)$ such that $(\gamma',\alpha)<0$.
Then there is another simple root, say $\alpha'$, such that $(\gamma',\alpha')<0$ because of saturation.
Such a root $\alpha'$ lies in the support of some $\gamma''\in\Sigma(\Gamma)$.
Further note that $(\gamma'',\alpha)=0$ and $(\gamma'',\alpha')>0$. 
Hence there should exist at least two dominant weights among the $\lambda_i$'s which are not orthogonal to $\alpha'$.
The previous arguments applied to $\gamma''$ and $\alpha'$ lead to a contradiction.
Hence $\gamma'$ has to be orthogonal to $\alpha$.
\end{proof}

\begin{lemma}
If $\lambda_i$ is (up to a scalar) a fundamental weight.
Then $\lambda_i$ is a (up to a scalar) a color in $\Delta(\Gamma)$.
\end{lemma}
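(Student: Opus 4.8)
The plan is to reduce the statement to the observation that the simple root underlying $\lambda_i$ is not orthogonal to $\Gamma$, and then to read the conclusion off from the combinatorial definition of the colors of a spherical system.

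First I would write $\lambda_i=a\,\omega_\alpha$ for a positive rational $a$ and a simple root $\alpha$; this is exactly what the hypothesis says. Since $(\omega_\alpha,\alpha^\vee)=1$, one gets $(\lambda_i,\alpha^\vee)=a\neq0$, so $\alpha$ is not orthogonal to every element of $\Gamma$, that is $\alpha\in S\setminus S^p(\Gamma)$.

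Next I would use that $(S^p(\Gamma),\Sigma(\Gamma))$ is a spherical system, so that its set of colors $\Delta(\Gamma)$ is defined. By the definition of the set of colors, a simple root lying in $S\setminus S^p(\Gamma)$ contributes the color $\omega_\alpha$ when $2\alpha\notin\Sigma(\Gamma)$ and the color $2\omega_\alpha$ when $2\alpha\in\Sigma(\Gamma)$. In either case $\Delta(\Gamma)$ contains a positive multiple of $\omega_\alpha$, hence a positive multiple of $\lambda_i$; equivalently $\lambda_i$ is, up to a scalar, an element of $\Delta(\Gamma)$. The only bookkeeping is the dichotomy according to whether $2\alpha\in\Sigma(\Gamma)$, and it is harmless since only proportionality is claimed.

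I do not expect a genuine obstacle here: the entire content is the identification of $\alpha$ as the simple root indexing a color, which is immediate from $(\lambda_i,\alpha^\vee)\neq0$. If one wished to pin down the scalar further (that $a$ equals $1$ or $2$), one would bring in the saturation of $\Gamma$ together with the earlier lemmas bounding the number of $\lambda_j$ that can fail to be orthogonal to a given simple root; but this sharpening is not needed for the assertion as stated.
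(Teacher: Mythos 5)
There is a genuine gap, and it is precisely the one point that carries all the content of the lemma. Your argument rests on reading the first clause of the definition of $\Delta$ literally: ``$\alpha\in S\setminus S^p(\Gamma)$ and $2\alpha\notin\Sigma(\Gamma)$ implies $\omega_\alpha\in\Delta(\Gamma)$.'' But that clause must be read together with the second one (and with the standard definition of colors of a spherical system, cf.\ Bravi--Luna): when $\alpha+\beta\in\Sigma(\Gamma)$ for some simple root $\beta$ orthogonal to $\alpha$ (a spherical root of type $\mathsf A_1\times\mathsf A_1$), the color moved by $\alpha$ is shared with $\beta$ and its weight is $\omega_\alpha+\omega_\beta$, which is \emph{not} proportional to $\omega_\alpha$. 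So the mere fact that $\alpha\notin S^p(\Gamma)$ does not place a multiple of $\omega_\alpha$ in $\Delta(\Gamma)$, and your conclusion does not follow from $(\lambda_i,\alpha^\vee)\neq 0$ alone. The paper's proof is organized exactly around this: using the standing hypothesis that $\supp\Sigma(\Gamma)=S$, it picks $\gamma\in\Sigma(\Gamma)$ with $\alpha\in\supp\gamma$, notes via Lemma~\ref{rootsupport} that $\gamma-\alpha$ is a root, and then checks case by case on $(\gamma,\alpha)$ that the color attached to $\alpha$ really is $\omega_\alpha$ or $2\omega_\alpha$ --- the only problematic case being $\gamma=\alpha+\alpha'$ of type $\mathsf A_1\times\mathsf A_1$, which is excluded because such a $\gamma$ can only occur in $\Sigma(\Gamma)$ if some $\lambda_j$ is non-orthogonal to both $\alpha$ and $\alpha'$, and saturation of $\Gamma$ forbids this when $\lambda_i$ is a multiple of $\omega_\alpha$.

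In short: your proof never looks at the spherical roots $\Sigma(\Gamma)$ at all, whereas the statement is really about the interaction between $\lambda_i$ and the spherical roots supported at $\alpha$. To repair it you would need to add the argument ruling out $\alpha+\alpha'\in\Sigma(\Gamma)$ (via saturation), and also invoke the hypothesis $\supp\Sigma(\Gamma)=S$ in force in this subsection, which your write-up does not use.
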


\begin{proof}
Let $\alpha$ be the simple root which is not orthogonal to the given weight $\lambda_i$.
Take $\gamma\in\Sigma(\Gamma)$ whose support contains $\alpha$.
Then $\gamma-\alpha$ has to be a root by Lemma~\ref{rootsupport}.
If $\gamma$ is orthogonal to $\alpha$ then $\gamma$ is of type $\mathsf B_n$ or $\mathsf C_n$ whence the lemma.
If $(\gamma,\alpha)>0$ then $\lambda_i$ is a color except if $\gamma$ is of type $\mathsf A_1\times\mathsf A_1$.
In the latter case, observe that $\gamma\in\Sigma(\Gamma)$ whenever one of the $\lambda_j$'s is neither orthogonal to $\alpha$ nor  $\alpha'$ for $\gamma=\alpha+\alpha'$. But because of saturation this is not possible.
\end{proof}

\begin{proof}[Proof of Proposition~\ref{converse}]
Let $(S^p,\Sigma)$ be a spherical system of $G$ and $\Delta$ be its set of colors.
Recall the definition of $\Gamma$ stated in Proposition~\ref{converse} and let $V$ be the corresponding $G$-module.
Take $\gamma\in \Sigma$. 
We shall prove that $\gamma\in\Sigma(\Gamma)$, namely that $\gamma$ is a $T_\ad$-weight of $\left(V/\mathfrak g.v_{\underline\lambda}\right)^{G_{v_{\underline\lambda}}}$.

By the characterization of the monoid $\Gamma$ along with Lemma~\ref{weightvector} (\textsl{see} also Lemma~\ref{f4}), one gets that in most cases, there exists a unique dominant weight among the given $\lambda_i$'s which is not orthogonal to the given $\gamma$.
Proposition~\ref{converse} thus follows from \cite{Js}.

Suppose now that there are more than one dominant weight which is not orthogonal to $\gamma$.
Then by definition, this occurs whenever $S(\gamma)$ is a $2$-set. 
By Table~\ref{sphericalroots}, the weight $\gamma$ has to be a root. 
One thus checks easily that $X_{-\gamma}v_\lambda$ is indeed a $T_\ad$-weightvector of $\left(V(\underline\lambda)/\mathfrak g.v_{\underline\lambda}\right)^{G_{v_{\underline\lambda}}}$.

\end{proof}

\end{document}